\newtheorem{theo}{Theorem}
\newtheorem{prop}{Proposition}[section]
\newtheorem{coro}[prop]{Corollary}
\newtheorem{lemma}[prop]{Lemma}
\theoremstyle{definition}
\newtheorem{remark}[prop]{Remark}
\newtheorem{definition}[prop]{Definition}
\newtheorem{example}[prop]{Example}
\newcommand{\bft}{{\mathbf t}}
\newcommand{\bff}{{\mathbf f}}
\newcommand{\bt}{{\mathbf t}}
\newcommand{\bM}{{\mathbf M}}
\newcommand{\bT}{{\mathbf T}}
\newcommand{\Aut}{\operatorname{Aut}}
\newcommand{\emptyplug}{{\mathbf p_\circ}}
\newcommand{\fullplug}{{\mathbf p_\bullet}}
\newcommand{\plug}{\operatorname{plug}}
\newcommand{\floorop}{\operatorname{floor}}
\newcommand{\Prob}{\operatorname{Prob}}
\newcommand{\nvert}{\operatorname{vert}}
\newcommand{\interior}{\operatorname{int}}
\newcommand{\flux}{\operatorname{flux}}
\newcommand{\sign}{\operatorname{sign}}
\newcommand{\NN}{{\mathbb{N}}}
\newcommand{\ZZ}{{\mathbb{Z}}}
\newcommand{\QQ}{{\mathbb{Q}}}
\newcommand{\RR}{{\mathbb{R}}}
\newcommand{\cT}{{\cal T}}
\newcommand{\cS}{{\cal S}}
\newcommand{\cC}{\mathcal{C}}
\newcommand{\cD}{{\cal D}}
\newcommand{\cP}{{\cal P}}
\newcommand{\cR}{{\cal R}}
\newcommand{\Hom}{\operatorname{Hom}}
\newcommand{\Tw}{\operatorname{Tw}}
\newcommand{\tw}{\operatorname{tw}}
\begin{document}
\title{Domino tilings of cylinders: the domino group \\
and connected components under flips}
\author{Nicolau C. Saldanha}

\maketitle

\begin{abstract}
We consider domino tilings of three-dimensional cubiculated regions.
In order to study such tilings, we define new objects:
the \textit{domino group} and \textit{domino complex}
of a quadriculated disk.
As an application, we study the problem of connectivity via flips.

A flip is a local move: two neighboring parallel dominoes
are removed and placed back in a different position.
The twist is an integer associated to each tiling,
which is invariant under flips.
A balanced quadriculated disk $\cD$ is {\em regular} if
whenever two tilings $\bt_0$ and $\bt_1$ of $\cD \times [0,N]$
have the same twist then
$\bt_0$ and $\bt_1$ can be joined by a sequence of flips
provided some extra vertical space is allowed.
We show that
$\cD$ is regular if and only if its domino group
is isomorphic to $\ZZ \oplus \ZZ/(2)$.
We prove that a rectangle $\cD = [0,L] \times [0,M]$
with $LM$ even is regular if and only if $\min\{L,M\} \ge 3$
and conjecture that in general ``large'' disks are regular.

We also prove that if $\cD$ is regular then
the extra vertical space necessary to join by flips
two tilings of $\cD \times [0,N]$ with the same twist
depends only on $\cD$, not on the height $N$.
Furthermore, almost every pair of tilings
with the same twist can be joined via flips
(with no extra space).

In the cases where $\cD$ is rectangular but not regular
we prove partial results
concerning the structure of the domino group:
the group is not abelian and has exponential growth.
Connected components via flips are now small
and almost no pair of tilings
with the same twist can be joined via flips.
\end{abstract}


\footnotetext{2010 {\em Mathematics Subject Classification}.
Primary 05B45; Secondary 52C20, 52C22, 05C70.
{\em Keywords and phrases} Three-dimensional tilings,
dominoes, dimers}

\bigbreak

\section{Introduction}

Let $\cD \subset \RR^2$ be a quadriculated region
in the plane, i.e., a union of finitely many unit squares
$[a,a+1] \times [b,b+1]$, $(a,b) \in \ZZ^2$.
A {\em domino} is a closed $2\times 1$ or $1\times 2$ rectangle;
a {\em domino tiling} of $\cD$ is a covering of $\cD$
by dominoes with disjoint interiors.
In a more combinatorial language,
$\cD$ can be identified with a bipartite graph:
vertices of the graph are unit squares in $\cD$
and adjacent squares are joined by edges.
A domino tiling of $\cD$ is then a perfect matching;
we prefer to speak of dominoes and tilings (instead of edges and matchings).

A quadriculated region $\cD$ is a planar quadriculated {\em disk}
if $\cD$ is contractible with contractible interior
and therefore homeomorphic to the unit disk.
In particular, $\cD$ is connected and simply connected
with connected interior.
The color of a square is $(-1)^{a+b}$,
with $+1$ equal black and $-1$ equal white.
We always assume that our quadriculated regions $\cD$ are {\em balanced}
(equal number of white and black squares).
We sometimes assume that $\cD$ is {\em tileable}
(admits at least one domino tiling).
A quadriculated disk $\cD$ is {\em nontrivial}
if it has at least $6$ unit squares and
at least one square has at least three neighbours;
we usually also assume that our quadriculated disks are nontrivial.

\begin{figure}[h]
\begin{center}
\includegraphics[scale=0.275]{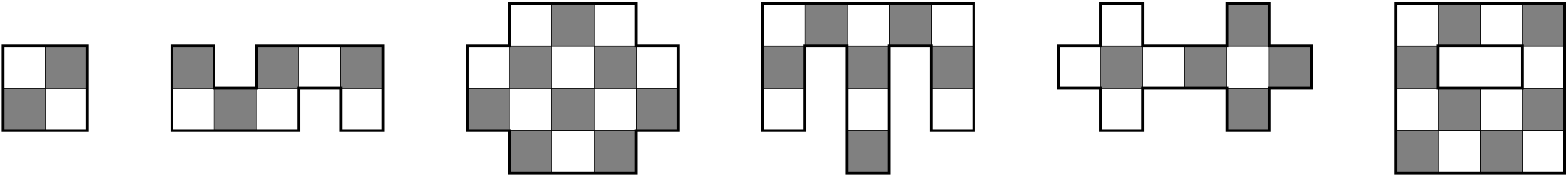}
\end{center}
\caption{Six examples of quadriculated regions, all connected and balanced.
The first two are trivial disks.
The third and fourth are nontrivial tileable quadriculated disks.
The fifth one is a nontrivial balanced quadriculated disk
which is not tileable. The sixth example is not a disk.}
\label{fig:disks}
\end{figure}

\begin{figure}[b!]
\centering
\def\svgwidth{75mm}
\begingroup%
  \makeatletter%
  \providecommand\color[2][]{%
    \errmessage{(Inkscape) Color is used for the text in Inkscape, but the package 'color.sty' is not loaded}%
    \renewcommand\color[2][]{}%
  }%
  \providecommand\transparent[1]{%
    \errmessage{(Inkscape) Transparency is used (non-zero) for the text in Inkscape, but the package 'transparent.sty' is not loaded}%
    \renewcommand\transparent[1]{}%
  }%
  \providecommand\rotatebox[2]{#2}%
  \newcommand*\fsize{\dimexpr\f@size pt\relax}%
  \newcommand*\lineheight[1]{\fontsize{\fsize}{#1\fsize}\selectfont}%
  \ifx\svgwidth\undefined%
    \setlength{\unitlength}{340.62662635bp}%
    \ifx\svgscale\undefined%
      \relax%
    \else%
      \setlength{\unitlength}{\unitlength * \real{\svgscale}}%
    \fi%
  \else%
    \setlength{\unitlength}{\svgwidth}%
  \fi%
  \global\let\svgwidth\undefined%
  \global\let\svgscale\undefined%
  \makeatother%
  \begin{picture}(1,0.67028813)%
    \lineheight{1}%
    \setlength\tabcolsep{0pt}%
    \put(0,0){\includegraphics[width=\unitlength,page=1]{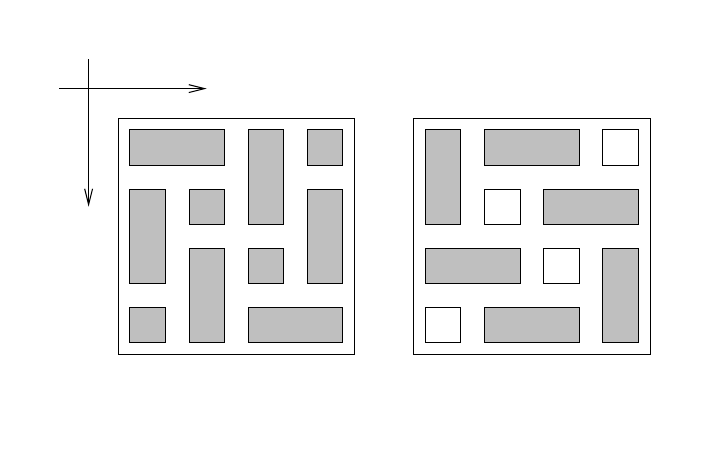}}%
    \put(0.33306534,0.54542844){\color[rgb]{0,0,0}\makebox(0,0)[lt]{\lineheight{1.25}\smash{\begin{tabular}[t]{l}$y$\end{tabular}}}}%
    \put(0.08321856,0.33722279){\color[rgb]{0,0,0}\makebox(0,0)[lt]{\lineheight{1.25}\smash{\begin{tabular}[t]{l}$x$\end{tabular}}}}%
    \put(0.24978308,0.08737601){\color[rgb]{0,0,0}\makebox(0,0)[lt]{\lineheight{1.25}\smash{\begin{tabular}[t]{l}$z\in[0,1]$\end{tabular}}}}%
    \put(0.66619438,0.08737601){\color[rgb]{0,0,0}\makebox(0,0)[lt]{\lineheight{1.25}\smash{\begin{tabular}[t]{l}$z\in[1,2]$\end{tabular}}}}%
  \end{picture}%
\endgroup%

\caption{A tiling of the box $[0,4]\times [0,4]\times [0,2]$.
The orientation of $\RR^3$ is very important:
the $z$ axes points away from the paper.
Examples of dominoes in this tiling are
$[0,1]\times[0,2]\times[0,1]$, $[0,2]\times[0,1]\times[1,2]$
and $[1,2]\times[1,2]\times[0,2]$.}
\label{fig:twist2}
\end{figure}

A cubiculated region is a set $\cR \subset \RR^3$ 
which is a union of finitely many unit cubes
$[a,a+1] \times [b,b+1] \times [c,c+1]$, $(a,b,c) \in \ZZ^3$.
The color of a cube is $(-1)^{a+b+c}$.
In this paper we always assume $\cR$ to be balanced and contractible
with contractible interior.
A cylinder is a simple example of a cubiculated region:
\[ \cR_N = \cD \times [0,N] \subset \RR^3, \]
where $\cD$ is a fixed balanced quadriculated disk.
A box is a special case of a cylinder:
$\cD = [0,L] \times [0,M]$,  $LM$ even;
$\cR_N = \cD \times [0,N]$.
A (3D) domino is the union of two unit cubes with a common face,
thus a $2\times 1\times 1$ rectangular cuboid.
A (domino) tiling of $\cR$ is a family of dominoes with disjoint interiors
whose union is $\cR$.
Again, $\cR$ can be identified with a bipartite graph,
dominoes with edges and tilings with matchings.
The set of domino tilings of a region $\cR$ is denoted by $\cT(\cR)$.

We follow \cite{primeiroartigo} and \cite{segundoartigo}
in drawing tilings of cubiculated regions by floors,
as in Figure \ref{fig:twist2}.
Vertical dominoes
(i.e., dominoes in the $z$ direction)
appear as two squares, one in each of two adjacent floors;
for visual facility, we leave the right square unfilled.

A flip is a local move in $\cT(\cR)$:
two parallel and adjacent (3D) dominoes are removed
and placed back in a different position.
Examples of flips are shown in Figure \ref{fig:flipexample}.
This is of course a natural generalization of the planar flip.
It is well known that if $\cD \subset \RR^2$
is a quadriculated disk
then any two tilings of $\cD$ can be joined by a finite sequence of flips
(\cite{thurston1990}, \cite{saldanhatomei1995}).
This is not true for 3D regions and tilings.
For $\bt_0, \bt_1 \in \cT(\cR)$, we write $\bt_0 \approx \bt_1$
if $\bt_0$ and $\bt_1$ can be joined by a finite sequence of flips.

\begin{figure}[ht]
\begin{center}
\includegraphics[scale=0.275]{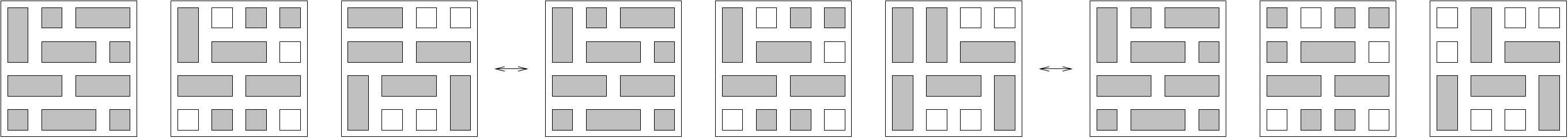}
\end{center}
\caption{Three tilings of the box $[0,4]\times[0,4]\times[0,3]$
and two flips.}
\label{fig:flipexample}
\end{figure}

A trit is the only local move involving three dominoes
which does not reduce to flips
(see \cite{primeiroartigo}, \cite{segundoartigo}, \cite{FKMS}).
The three dominoes involved are in three different directions
and fill a $2\times 2\times 2$ box minus two opposite unit cubes.
Figure \ref{fig:trit} shows a trit in the $3\times 3\times 2$ box;
notice that the first tiling admits no flips.

\begin{figure}[h]
\begin{center}
\includegraphics[scale=0.275]{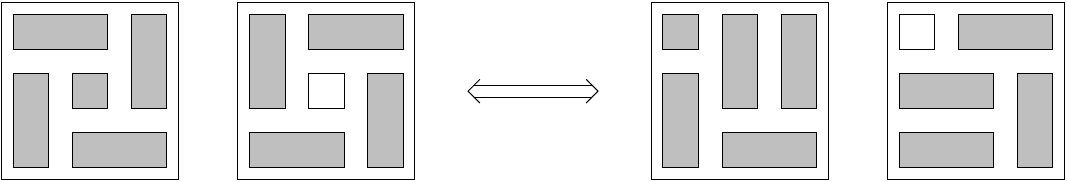}
\end{center}
\caption{Two tilings of the box $[0,3]\times[0,3]\times[0,2]$
joined by a trit.
The first tiling has twist $-1$, the second one has twist $0$.}
\label{fig:trit}
\end{figure}

For a fixed balanced quadriculated disk $\cD$,
let $\bt_0 \in \cT(\cR_{N_0})$ and $\bt_1 \in \cT(\cR_{N_1})$.
These tilings can be concatenated to define a tiling
$\bt_0 \ast \bt_1 \in \cT(\cR_{N_0+N_1})$.
If $\bt_0$ and $\bt_1$ are drawn as in our figures
(say, Figures \ref{fig:flipexample} and \ref{fig:trit}),
then the figure for $\bt_0 \ast \bt_1$ is obtained by concatenating
the figures for $\bt_0$ and $\bt_1$.
Equivalently, we may translate $\bt_1$ by $(0,0,N_0)$ to obtain
a tiling $\tilde\bt_1$ of $\cD \times [N_0,N_0+N_1]$.
The set of dominoes forming $\bt_0 \ast \bt_1$ 
is the disjoint union of the set of dominoes forming $\bt_0$ and $\tilde\bt_1$.

For $N$ even, there exists a tiling $\bt_{\nvert,N} \in \cT(\cR_N)$
such that all dominoes are vertical
(i.e., of the form $[a,a+1]\times [b,b+1] \times [c,c+2]$);
we call this the vertical tiling.
For $\cD = [0,4]^2$,
let $\bt_{\nvert,2}, \bt_0, \bt_1 \in \cT(\cR_2)$
be the tilings in Figure \ref{fig:442}.
Clearly, $\bt_0 \not\approx \bt_1$ (neither admits a flip);
a computation verifies that 
$\bt_0 \ast \bt_{\nvert,2} \approx \bt_1 \ast \bt_{\nvert,2}$.


\begin{figure}[h]
\centering
\def\svgscale{0.275}
\begingroup%
  \makeatletter%
  \providecommand\color[2][]{%
    \errmessage{(Inkscape) Color is used for the text in Inkscape, but the package 'color.sty' is not loaded}%
    \renewcommand\color[2][]{}%
  }%
  \providecommand\transparent[1]{%
    \errmessage{(Inkscape) Transparency is used (non-zero) for the text in Inkscape, but the package 'transparent.sty' is not loaded}%
    \renewcommand\transparent[1]{}%
  }%
  \providecommand\rotatebox[2]{#2}%
  \newcommand*\fsize{\dimexpr\f@size pt\relax}%
  \newcommand*\lineheight[1]{\fontsize{\fsize}{#1\fsize}\selectfont}%
  \ifx\svgwidth\undefined%
    \setlength{\unitlength}{1051.42359791bp}%
    \ifx\svgscale\undefined%
      \relax%
    \else%
      \setlength{\unitlength}{\unitlength * \real{\svgscale}}%
    \fi%
  \else%
    \setlength{\unitlength}{\svgwidth}%
  \fi%
  \global\let\svgwidth\undefined%
  \global\let\svgscale\undefined%
  \makeatother%
  \begin{picture}(1,0.20476916)%
    \lineheight{1}%
    \setlength\tabcolsep{0pt}%
    \put(0,0){\includegraphics[width=\unitlength,page=1]{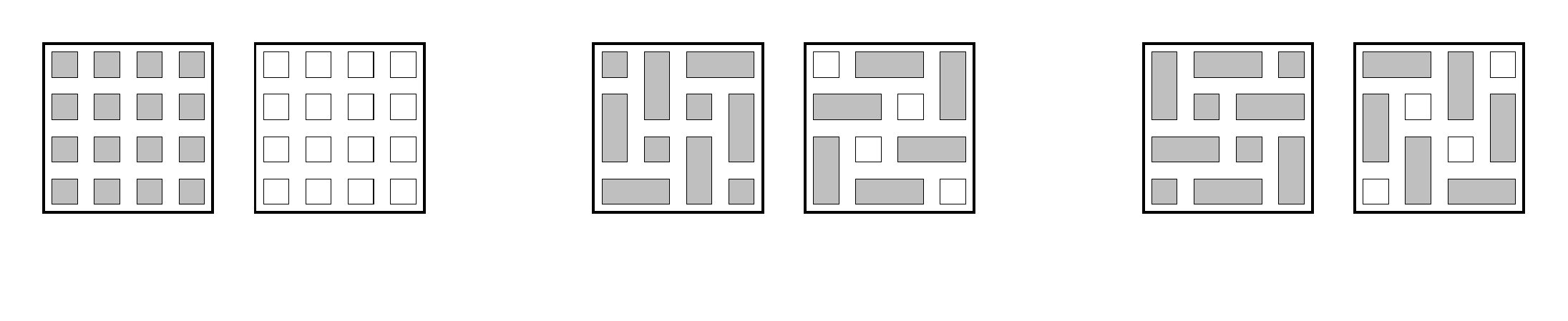}}%
    \put(0.12228751,0.02852272){\color[rgb]{0,0,0}\makebox(0,0)[lt]{\lineheight{1.25}\smash{\begin{tabular}[t]{l}$\bt_{\nvert,2}$\end{tabular}}}}%
    \put(0.48651027,0.02852272){\color[rgb]{0,0,0}\makebox(0,0)[lt]{\lineheight{1.25}\smash{\begin{tabular}[t]{l}$\bt_0$\end{tabular}}}}%
    \put(0.83724329,0.02852272){\color[rgb]{0,0,0}\makebox(0,0)[lt]{\lineheight{1.25}\smash{\begin{tabular}[t]{l}$\bt_1$\end{tabular}}}}%
  \end{picture}%
\endgroup%

\caption{Three tilings $\bt_{\nvert,2}$, $\bt_0$ and $\bt_1$
of the box $[0,4]\times[0,4]\times[0,2]$.
We have $\bt_0 \not\approx \bt_1$ but $\bt_0 \sim \bt_1$.
Both $\bt_0$ and $\bt_1$ have twist $+2$. Neither admits a flip.}
\label{fig:442}
\end{figure}

Motivated by this example,
we define a weaker equivalence relation $\sim$ on tilings
(meaning that $\bt_0 \approx \bt_1$ always implies $\bt_0 \sim \bt_1$).
Assume $N_0 \equiv N_1 \pmod 2$ and $\bt_i \in \cT(\cR_{N_i})$:
$\bt_0 \sim \bt_1$ if and only if there exist $M_0 \in 2\NN$ and
$M_1 = N_0 + M_0 - N_1 \in 2\NN$ such that
$\bt_0 \ast \bt_{\nvert,M_0} \approx \bt_1 \ast \bt_{\nvert,M_1}$.
Thus, for the tilings in Figure \ref{fig:442} we have $\bt_0 \sim \bt_1$
(in this case, we can take $M_0 = M_1 = 2$).

In \cite{FKMS} the concept of a {\em refinement} of a tiling is introduced.
Once we are familiar with the concept of refinement,
it is not hard to see that if $\bt_0, \bt_1 \in \cT(\cR_n)$ satisfy
$\bt_0 \sim \bt_1$ then there exist refinements
$\tilde\bt_0, \tilde\bt_1$ with $\tilde\bt_0 \approx \tilde\bt_1$
but the converse is not always true.
We shall not require the concept of refinement in the present paper.

\bigskip

Given a balanced quadriculated disk $\cD$,
we define the {\em full domino group $G_{\cD}$}
and the {\em even domino group $G^{+}_{\cD}$},
a subgroup of index $2$ of $G_{\cD}$.
The set of elements of the group is
the set of equivalence classes of $\sim$;
for $G^{+}_{\cD}$ we only take even values of $N$:
\begin{equation}
\label{eq:dominogroup}
G_{\cD} = \left( \bigsqcup_{N \in \NN^\ast} \cT(\cR_{N}) \right)/\sim
\quad > \quad
G^{+}_{\cD} = \left( \bigsqcup_{N \in \NN^\ast} \cT(\cR_{2N}) \right)/\sim.
\end{equation}
As usual with spaces defined as a quotient under an equivalence relation,
we abuse notation and think of the elements of $G_{\cD}$ as tilings $\bt$.
The operation in $G_{\cD}$ is $\ast$, the concatenation;
the identity element is $\bt_{\nvert,2}$.
The inverse of a tiling $\bt \in \cT(\cR_N)$ is
$\bt^{-1} \in \cT(\cR_N)$
obtained by reflecting in the $z$ coordinate.
There is a homomorphism $G_{\cD} \to \{\pm 1\}$
taking $\bt \in \cT(\cR_N)$ to $N \bmod 2$;
by construction, $G^{+}_{\cD}$ is the kernel of this homomorphism.
As we shall see in Section \ref{sect:plug},
$G_{\cD}$ is a finitely presented group,
the fundamental group of an explicit finite complex.
The domino group $G_{\cD}$ has different structures
for different quadriculated disks $\cD$
and it indicates the behavior of connected components under flips
of the region $\cR_N$, particularly for large values of $N$.

The {\em twist}
(see \cite{primeiroartigo}, \cite{segundoartigo}, \cite{FKMS})
is a group homomorphism $\Tw: G_{\cD} \to \ZZ$.
A valid definition of the twist for tilings of cylinders
is presented in Section \ref{sect:twist}.
(There are many equivalent definitions of twist:
the one presented in \cite{FKMS} is very general
and uses homology theory;
the one presented in \cite{primeiroartigo} is more elementary
but still complicated, and works only for certain regions.
Unfortunately, none of these definitions is as simple as might be desired.)
We recall a few basic facts:
trits change the value of the twist by adding $\pm 1$.
For cylinders, taking a mirror image of a tiling
changes the sign of its twist:
in particular, $\Tw(\bt^{-1}) = -\Tw(\bt)$.
On the other hand,
if a rotation preserves $\cR$
then it also preserves twist.
If $\cD$ is not trivial then the map $\Tw$ is surjective;
in particular, $G_{\cD}$ is infinite.
A quadriculated disk $\cD$ is {\em regular}
if  $\Tw: G^{+}_{\cD} \to \ZZ$ is an isomorphism,
which implies $G_{\cD} \approx \ZZ \oplus (\ZZ/(2))$.
We conjecture that a quadriculated disk is regular
unless it has a norrow bottleneck.
The following theorem is a special case of this conjecture;
more evidence in favor of the conjecture can be found in \cite{marreiros}.

\begin{theo}
\label{theo:rectangle}
Let $\cD = [0,L] \times [0,M]$ be a rectangle with $LM$ even.
Then $\cD$ is regular if and only if $\min\{L,M\} \ge 3$.
\end{theo}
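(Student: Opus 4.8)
The plan is to prove the two directions separately, using the criterion from the excerpt: $\cD$ is regular if and only if $G^{+}_\cD \cong \ZZ$ via $\Tw$, equivalently $G_\cD \cong \ZZ \oplus \ZZ/(2)$. For the ``only if'' direction I would dispose of the small cases $L = 1$ and $L = 2$ directly. If $\min\{L,M\} = 1$, say $M = 1$, then $\cD$ is a $1 \times L$ strip; such a $\cD$ is trivial (no square has three neighbours), so the hypothesis of nontriviality fails — but more to the point, one checks that $\cR_N = [0,L]\times[0,1]\times[0,N]$ is a genuinely two-dimensional slab and every tiling is flip-connected to the vertical one only when twists agree trivially; in any case the interesting obstruction is $M = 2$. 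For $\cD = [0,L]\times[0,2]$ I would exhibit two tilings of $\cR_N$ (for suitable $N$) with the same twist that are not $\sim$-equivalent, or equivalently show $\Tw$ is not injective on $G^{+}_\cD$. The natural candidate is to find a nontrivial element of $\ker\Tw$: one can build a tiling of $[0,2]\times[0,2]\times[0,N]$-type slabs inside $[0,L]\times[0,2]\times[0,N]$ whose twist vanishes but which is not flip-connected to vertical even after adding vertical space; the rigidity of the $2$-thick slab (dominoes are forced to lie in very constrained patterns, ``brick-wall'' configurations that admit no flips) is what prevents equivalence. Concretely I expect to use the $[0,4]\times[0,2]$ or $[0,6]\times[0,2]$ block and a parity/coloring invariant of the induced planar tilings of the two vertical faces that survives the $\sim$-relation, giving $G^{+}_\cD \supsetneq \ZZ$.

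For the ``if'' direction, assume $\min\{L,M\} \ge 3$ — by symmetry we may take $3 \le L \le M$ — and we must show that any $\bt \in \cT(\cR_N)$ with $\Tw(\bt) = 0$ satisfies $\bt \sim \bt_{\nvert}$; since $\Tw$ is always a surjective homomorphism to $\ZZ$ on a nontrivial disk, and since the identity $\bt_{\nvert,2}$ and its powers realize all of $\ZZ$, this shows $\Tw: G^{+}_\cD \to \ZZ$ is an isomorphism. The strategy I would follow is the standard ``flux + twist'' program from \cite{primeiroartigo}, \cite{segundoartigo}: first, allowing extra vertical space, reduce $\bt$ to a tiling whose \emph{flux} (the homology class recording net flow of dominoes through horizontal cross-sections) is trivial; this step is essentially a planar argument on each floor plus the fact that $L, M \ge 3$ gives enough room to route dominoes. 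Second, among tilings of trivial flux, use the twist as the \emph{only} remaining invariant: a trit changes twist by $\pm 1$ and within a cylinder of width $\ge 3$ there is always room to perform a trit (the $3\times 3\times 2$ box of Figure \ref{fig:trit} embeds), so we can adjust the twist to $0$; then invoke a connectivity result (of the type ``trivial flux and trivial twist implies flip-connected to vertical, after adding vertical space'') — this is where I would lean on \cite{FKMS} or reprove the cylinder case by an explicit induction on $N$, peeling off a vertical layer at a time once the tiling near the top floor has been normalized.

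The main obstacle, I expect, is the base of the induction / the ``enough room'' claims: one needs that width $\ge 3$ in \emph{both} planar directions is genuinely sufficient to (a) kill arbitrary flux and (b) perform the trits needed to kill twist, while only ever \emph{adding} vertical space, never requiring more horizontal room. For rectangles this should be manageable because a $3 \times 3$ sub-square is available and the trit of Figure \ref{fig:trit} lives there; the flux-killing is the more delicate bookkeeping, since one must show a sequence of flips (possibly using extra floors) that transports a domino sticking out through a cross-section back into a purely vertical configuration without creating new obstructions elsewhere. I would handle this by working floor-by-floor from the top, maintaining the invariant that everything above the current floor is already vertical, and using the planar flip-connectivity theorem (\cite{thurston1990}, \cite{saldanhatomei1995}) on the ``active'' floor together with one spare vertical layer as a buffer. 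A secondary technical point is to confirm that the reduction is compatible with the $\sim$-relation, i.e.\ that the extra vertical space used is finite and bounded — but the excerpt already asserts that for regular $\cD$ this bound depends only on $\cD$, so once regularity is established this is automatic; during the proof it suffices that the space used is finite for each given $\bt$.
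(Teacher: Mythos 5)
Your proposal has genuine gaps in both directions. For the ``only if'' direction, the decisive issue is that non-equivalence under $\sim$ cannot be inferred from rigidity: the relation $\sim$ allows an arbitrary (even) amount of extra vertical space, and tilings admitting no flips at all can perfectly well become flip-connected after padding (this is exactly the situation of Figure \ref{fig:442}). So ``brick-wall configurations admit no flips'' proves nothing about $\ker\Tw$, and the ``parity/coloring invariant of the two vertical faces'' is never constructed nor shown to be invariant under flips \emph{and} under appending vertical floors. What is actually needed is an invariant defined on the domino group itself, i.e.\ a homomorphism out of $G_{\cD}=\pi_1(\cC_{\cD})$; the paper builds one explicitly in Lemma \ref{lemma:thin}, assigning elements of $F_2\ltimes\ZZ/(2)$ to the finitely many special floors of $[0,2]\times[0,M]$ and checking that every $2$-cell boundary maps to the identity. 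That verification (the special floors lie on no flip $2$-cell, or the contributions cancel) is the entire content of the argument, and nothing in your sketch replaces it.

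For the ``if'' direction, the step you defer --- ``trivial flux and trivial twist implies flip-connected to vertical, after adding vertical space'' --- \emph{is} the statement that $\cD$ is regular; it cannot be invoked, and it is not in \cite{FKMS}, whose connectivity results concern refinements, a relation strictly weaker than $\sim$ (the paper points this out in the Introduction). Likewise, a floor-by-floor normalization ``peeling off a vertical layer at a time'' cannot succeed as described, because the obstruction (twist) is global and is invisible to any procedure that only normalizes the tiling near the top floor; also, trits have no role in proving injectivity of $\Tw$ on $G^{+}_{\cD}$, since they change the twist and hence leave the equivalence class entirely. The paper's route is quite different and the extra machinery is essential: it first reduces to a finite, explicit generating set of $G^{+}_{\cD}$ indexed by dominoes off a fixed hamiltonian path together with their flux data (Corollary \ref{coro:generators}), verifies the small rectangles $L,M\in[3,6]$ by a finite (computer-checked) computation (Lemmas \ref{lemma:44} and \ref{lemma:34}), and then runs an induction on the number of columns (Lemma \ref{lemma:thicksublemma}) in which each generator $\bt_{d;p}$ is confined to, or transported into, a narrower subrectangle already known to be regular --- the delicate case being plugs that must load both extreme columns, handled by rerouting along a second path $\gamma_1$ and planar flip-connectivity. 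To repair your proof you would need either to supply this reduction-to-generators and induction, or some genuinely new argument establishing injectivity of $\Tw$; as written, the core of both implications is missing.
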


\begin{figure}[h]
\begin{center}
\includegraphics[scale=0.275]{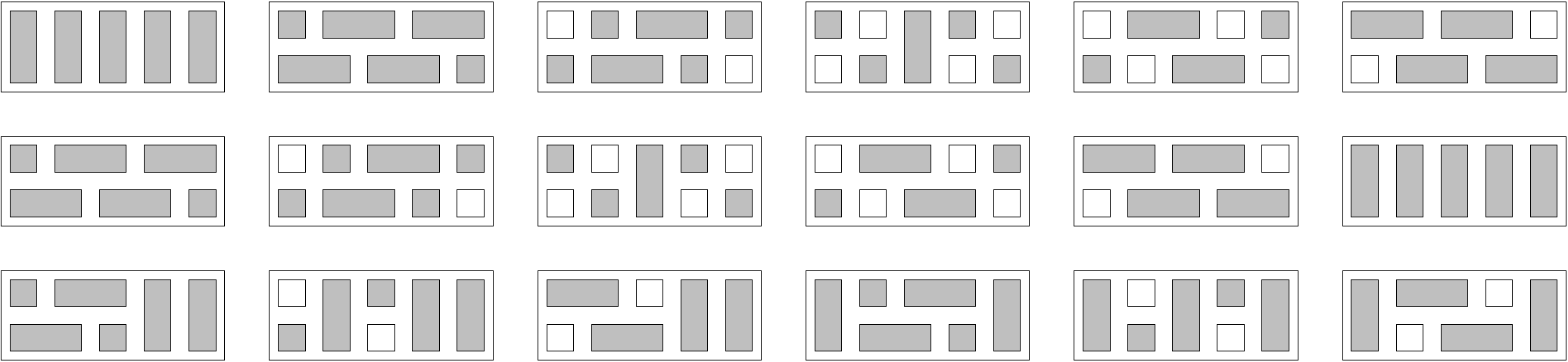}
\end{center}
\caption{Three tilings $\bt_0$, $\bt_1$ and $\bt_2$ of $\cR_6$
for $\cD = [0,2]\times[0,5]$.
We have $\bt_0 \not\sim \bt_1 \not\sim \bt_2 \not\sim \bt_0$ and
$\Tw(\bt_0) = \Tw(\bt_1) = \Tw(\bt_2)$.}
\label{fig:L2}
\end{figure}

As we shall see Lemma \ref{lemma:thin} (in Section \ref{sect:thin}),
for $\cD = [0,2] \times [0,M]$, $M \ge 3$,
there exists a surjective map
$\phi: G_{\cD} \to F_2 \ltimes \ZZ/(2)$
where $F_2$ is the free group with two generators.
This implies that $G_{\cD}$ has exponential growth
and therefore
$\Tw(\bt_0) = \Tw(\bt_1)$ is far from implying that $\bt_0 \sim \bt_1$.
For instance,
the three tilings $\bt_0$, $\bt_1$ and $\bt_2$ shown in Figure \ref{fig:L2}
satisfy $\Tw(\bt_0) = \Tw(\bt_1) = \Tw(\bt_2)$ and
$\bt_0 \not\sim \bt_1 \not\sim \bt_2 \not\sim \bt_0$
(this claim and the similar one in Figure \ref{fig:234}
will follow from applying the map $\phi$,
constructed in Lemma \ref{lemma:thin}).

By definition, if $\cD$ is a regular quadriculated disk
and $\bt_0, \bt_1 \in \cT(\cR_N)$ satisfy $\Tw(\bt_0) = \Tw(\bt_1)$
then there exists $M \in 2\NN$ such that
$\bt_0 \ast \bt_{\nvert,M} \approx \bt_1 \ast \bt_{\nvert,M}$.
It is natural to ask about the size of $M$.


\begin{theo}
\label{theo:M}
Let $\cD$ be a regular quadriculated disk containing a $2\times 3$ rectangle.
Then there exists $M$ (depending on $\cD$ only)
such that for all $N \in \NN$ and for all $\bt_0, \bt_1 \in \cT(\cR_N)$
if $\Tw(\bt_0) = \Tw(\bt_1)$
then $\bt_0 \ast \bt_{\nvert,M} \approx \bt_1 \ast \bt_{\nvert,M}$.
\end{theo}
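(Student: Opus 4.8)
The plan is to exploit the finite presentation of $G_\cD$ together with regularity. Since $\cD$ is regular, $G^+_\cD \cong \ZZ$ via $\Tw$, and the element of $G^+_\cD$ with twist $+1$ is represented by some tiling $\bt_\ast \in \cT(\cR_{N_\ast})$ for a fixed height $N_\ast$. First I would observe that the set of \emph{all} tilings, at all heights, maps onto $G_\cD$, and that two tilings $\bt_0, \bt_1 \in \cT(\cR_N)$ with the same twist represent the same element of $G_\cD$; hence by definition of $\sim$ there is \emph{some} $M = M(\bt_0,\bt_1)$ with $\bt_0 \ast \bt_{\nvert,M} \approx \bt_1 \ast \bt_{\nvert,M}$. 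The content of the theorem is that this $M$ can be bounded independently of $N$ and of the pair of tilings. The natural source of such a uniform bound is the $2 \times 3$ rectangle hypothesis: this is exactly the hypothesis under which (by Theorem \ref{theo:rectangle} applied to the sub-rectangle, and the machinery behind it) one has access to a ``universal'' local configuration that can absorb twist. I would use the $2\times 3$ rectangle to produce, inside $\cD \times [0, M_0]$ for some fixed $M_0$, tilings realizing each of the three generating moves (flip, trit, and vertical stacking) in a way that is supported near the bottom floors and leaves the rest vertical.

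The key steps, in order, would be: (1) Reduce to the case $\Tw(\bt_0) = \Tw(\bt_1) = 0$ by pre-composing with a fixed tiling of bounded height realizing twist $-\Tw(\bt_0)$ — this costs a bounded amount of extra height because a single generator $\bt_\ast$ of $G^+_\cD$ suffices, and $\bt_0, \bt_1$ have the same twist so the same correction works for both. (2) Given $\bt_0 \sim \bt_1$ with twist $0$, both represent the identity of $G^+_\cD$; I would use the finite presentation of $G_\cD$ from Section \ref{sect:plug} to write each as a product of a \emph{bounded} number of relators — but this alone does not bound $M$, since applying relators may transiently increase height. (3) The crucial move: show that any tiling $\bt \in \cT(\cR_N)$ representing the identity of $G^+_\cD$ satisfies $\bt \ast \bt_{\nvert, M_1} \approx \bt_{\nvert, N + M_1}$ for a \emph{fixed} $M_1$. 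This is where the $2 \times 3$ rectangle enters: using it, one shows that the ``cork'' or plug constructions (cf. the notation $\cork$, $\plug$ in the macro list) let one trade height for flip-connectivity in a controlled way, essentially because the $2\times 3$ block supports a tiling that is flip-connected to the vertical tiling of $\cD \times [0,M_1]$ from \emph{both} sides and can be slid past arbitrary configurations. (4) Combine: if $\bt_0 \ast \bt_{\nvert,M_1} \approx \bt_{\nvert,\cdot} \approx \bt_1 \ast \bt_{\nvert,M_1}$ after the reduction, then $M = M_1 + (\text{height correction})$ works uniformly.

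The main obstacle I expect is step (3): controlling the \emph{transient} height used while converting a height-$N$ identity tiling to the vertical tiling by flips. A priori, straightening out a complicated tiling of $\cR_N$ might require vertical working space proportional to $N$. The $2\times 3$ rectangle hypothesis must be used to defeat exactly this: the idea would be to ``comb'' the tiling floor by floor, using the $2\times 3$ region as a reservoir in which each floor's non-vertical structure is locally resolved within a fixed number of extra layers, so that the height overhead is additive over floors with a bounded per-floor cost — but one must show the cost does not accumulate, i.e., that after processing a floor the configuration below is genuinely vertical and stays so. Establishing this non-accumulation, probably via an argument that the relevant sub-complex deformation retracts onto a finite piece (giving $G_\cD$ its finite $K(\pi,1)$ and simultaneously a bound on the diameter of the relevant coset), is the technical heart of the proof.
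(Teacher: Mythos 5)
Your outline correctly isolates the crux — a uniform bound on the \emph{transient} vertical space needed to straighten a twist-zero (or equal-twist) tiling — but it does not prove it, and the mechanism you gesture at is not the one that works. Your step (3) is essentially a restatement of the theorem, and the ``comb floor by floor, with bounded per-floor cost absorbed in the $2\times 3$ reservoir'' idea runs into exactly the problem you name and do not resolve: the partial twist accumulated up to an intermediate floor can be of order $N$ (the twist of a height-$N$ cylinder tiling can be as large as $c_{\cD}N$, cf.\ Section \ref{sect:cD}), so ``locally resolving each floor'' cannot keep the region below vertical without somewhere storing an unbounded amount of twist; nothing in your sketch shows the overhead does not accumulate. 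The paper's proof supplies precisely the missing geometric input: tilings are paths in the finite complex $\cC_{\cD}$, and regularity is used to build the spine $\tilde\cC^{\bullet}_{\cD}\subset\tilde\cC_{\cD}$ (a copy of $\RR$) together with the quasi-isometry statement of Lemma \ref{lemma:quasi} (every vertex of the universal cover lies within a uniform distance $d$ of the spine, and the spine is isometrically embedded, via Lemma \ref{lemma:cD} and the constant $c_{\cD}$). One then writes an explicit homotopy from $\Gamma_1$ to a normal-form path $\Gamma_0$ whose intermediate paths follow $\Gamma_1$ for $s$ steps, drop to the spine, and run along it; consecutive intermediate paths differ only on arcs of length at most $4d$, so the finiteness argument is applied only to pairs of \emph{short} paths (length $\le 4d+4$), yielding a constant $\tilde M$ and the bound $M=\tilde M+4d$ on all intermediate lengths. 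Your appeal to ``a finite $K(\pi,1)$'' does not substitute for this: $\cC_{\cD}$ is already finite, and finiteness of the presentation alone bounds neither the number of relators needed for a word of length $N$ nor the transient path lengths; what is really being used is that $G^{+}_{\cD}\cong\ZZ$ makes $\tilde\cC_{\cD}$ quasi-isometric to a line (a hyperbolicity-type property, cf.\ Remark \ref{remark:hyperbolic}).

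Two further steps of your plan are flawed as stated. In step (1), there is no \emph{bounded-height} tiling realizing twist $-\Tw(\bt_0)$, since $\Tw(\bt_0)$ may grow linearly in $N$; and even granting a correction prefix $\bt_c$, from $\bt_c\ast\bt_0\ast\bt_{\nvert,M_1}\approx\bt_c\ast\bt_1\ast\bt_{\nvert,M_1}$ you cannot cancel $\bt_c$ to deduce $\bt_0\ast\bt_{\nvert,M}\approx\bt_1\ast\bt_{\nvert,M}$: flip-equivalence with bounded padding has no cancellation law (that failure is the whole difference between $\approx$ and $\sim$). In step (2), ``a bounded number of relators'' is unjustified — the area of a null-homotopic word of length $N$ is controlled by the Dehn function, which you would have to bound, and as you concede this still would not bound the intermediate heights. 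Finally, note that the $2\times 3$ rectangle plays no role in the argument you would need here; the paper's proof of Theorem \ref{theo:M} runs entirely through regularity, the spine, and Lemma \ref{lemma:quasi}.
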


There are several ways in which it would be desireable to improve this result,
such as providing an estimate for $M$
(following the proof gives us at best a crude estimate for $M$,
and the computations seem daunting even for small examples).
We do not try to state or prove a related result for irregular disks;
the proof of Theorem \ref{theo:M} suggests that we should decide
whether the domino group $G_{\cD}$ is hyperbolic
(see Remark \ref{remark:hyperbolic}).

The consequences of regularity are further explored in other papers;
here we provide a sample.
Given a cubiculated region $\cR$, 
we consider random tilings $\bT$ of $\cR$
(i.e., random variables $\bT: \Omega \to \cT(\cR)$).
Such random tilings are assumed to be chosen uniformly in $\cT(\cR)$.
There are significant difficulties
in implementing such random variables
(with practical computer programs
and correct probability distribution).
In a related vein,
even giving a good estimate for $|\cT(\cR)|$
(where $\cR$ a large box)
is an open problem.

We describe a probabilistic result
which is stated and proved in \cite{saldanhaejc}.
Consider the cardinality
$|\ker(\Tw)| \in \NN^\ast \cup \{\infty\}$
of the normal subgroup $\ker(\Tw) < G^{+}_{\cD}$,
the kernel of $\Tw: G^{+}_{\cD} \to \ZZ$.
If $\ker(\Tw)$ is infinite,
$1/|\ker(\Tw)|$ is understood to be equal to $0$.
For non trivial $\cD$,
$|\ker(\Tw)| = 1$ if and only if $\cD$ is regular.
For $\cD = [0,L] \times [0,M]$ (with $LM$ even),
Theorem \ref{theo:rectangle}
and Lemma \ref{lemma:thin} imply that:
\[ \frac{1}{|\ker(\Tw)|} = \begin{cases}
1, & \min\{L,M\} \ge 3, \\ 0, & \min\{L,M\} = 2. \end{cases} \]
There are no known examples of irregular disks for which 
$|\ker(\Tw)| \notin \{1,\infty\}$.

Let $\cD$ be a non trivial quadriculated disk.
Let $G^{+}_{\cD}$ be the even domino group;
let $\Tw: G^{+}_{\cD} \to \ZZ$ be the twist map.
Let $\bT_0, \bT_1$ be independent random tilings
of $\cR_N = \cD \times [0,N]$;
we have
\begin{equation}
\label{equation:limprob}
\lim_{N \to \infty} \Prob[\bT_0 \approx \bT_1 | \Tw(\bT_0) = \Tw(\bT_1)]
= \frac{1}{|\ker(\Tw)|}.
\end{equation}
Equation \ref{equation:limprob} is essentially
Theorem 6 from \cite{saldanhaejc}.
In particular, the probability tends to $1$
if and only if $\cD$ is regular
and tends to $0$ if $\cD = [0,2] \times [0,M]$.

\smallskip

Section \ref{sect:examples} lists a few computational examples.
In Sections \ref{sect:plug} and \ref{sect:cork} we present
some helpful concepts, such as that of a {\em plug}, a {\em floor}
and a {\em cork}; we also construct certain tilings which 
will be important again later.
In Section \ref{sect:groupcomplex} we construct a $2$-complex,
the {\em domino complex} $\cC_{\cD}$
(where $\cD$ is a balanced quadriculated disk):
tilings of $\cR_N = \cD \times [0,N]$ correspond to closed
paths of length $N$ in $\cC_{\cD}$
and the domino group $G_{\cD}$ is the fundamental group $\pi_1(\cC_{\cD})$.
In Section \ref{sect:twist} we present a self-contained definition
of twist and prove some basic properties.
Section \ref{sect:thin} is dedicated to proving that rectangles
$[0,2] \times [0,M]$ (for $M \ge 3$) are {\em not} regular.
The fact that $G_{\cD}$ is the fundamental group of a finite $2$-complex
implies that it is finitely presented,
and gives us an explicit finite family of generators.
This family is finite, but too large to be useful
in explicit computations:
in Section \ref{sect:gen} we present a far smaller,
and therefore more manageable, family of generators.
Given such a family, it is not hard to produce an algorithm which,
given an explicit disk $\cD$, will,
if $\cD$ is regular, produce a proof of this fact in finite time;
if $\cD$ is not regular the algorithm will run forever.
For small examples, the algorithm can actually be executed.
In Section \ref{sect:44} we walk through this algorithm
for $\cD = [0,4]\times[0,4]$, proving that it is regular;
this is Lemma \ref{lemma:44}.
Similarly, we see in Lemma \ref{lemma:34} that
if $L,M \in [3,6] \cap \ZZ$ and $LM$ is even then
$\cD = [0,L]\times [0,M]$ is regular.
In Section \ref{sect:thick} we prove
Theorem \ref{theo:rectangle}. 
In Section \ref{sect:cD} we define
the constant $c_{\cD} \in \QQ \cap (0,+\infty)$
for a regular disk $\cD$ and construct
a quasi-isometry between $\tilde\cC_{\cD}$
and the {\em spine},
a subcomplex $\tilde\cC^{\bullet}_{\cD} \subset \tilde\cC_{\cD}$
isometric to $\RR$, with vertices in $\ZZ$.
In Section \ref{sect:theoM} we prove Theorem \ref{theo:M}. 
Finally, Section \ref{sect:final} contains a few final remarks.

\smallskip

The author thanks Juliana Freire, Caroline Klivans,
Raphael de Marreiros, Pedro Milet and Breno Pereira for helpful conversations,
comments and suggestions;
Caroline Klivans read a preliminary version and provided 
detailed and helpful recommendations.
The author thanks the referee
for several insightful and productive suggestions.
The author is also thankful for the generous support of
CNPq, CAPES and FAPERJ (Brazil).


\section{Examples}
\label{sect:examples}

For the $3\times 3\times 2$ box
there exist $229$ tilings:
exactly one tiling has twist $-1$
(the first tiling shown in Figure \ref{fig:trit})
and exactly one tiling has twist $+1$ (its mirror image),
the other $227$ tilings have twist $0$ and
form an equivalence class under $\approx$ (and also under $\sim$).
For the $4\times 4\times 2$ box
there exist tilings with the same twist
but which can not be joined by a sequence of flips.
Indeed, for this region $\cR$, there are $32000$ tilings,
$5$ possible values for the twist (from $-2$ to $+2$)
and $\cT(\cR)$ has $9$ connected components via flips.
All $31484$ tilings with twist $0$ are in the same connected component.
The $256$ tilings with twist $+1$ form two connected components
with $128$ tilings each (and similarly for twist $-1$).
The two tilings in Figure \ref{fig:442} are the only tilings
with twist $+2$: notice that neither admits a flip
(similarly, there are two tilings with twist $-2$).
In this example, one can check that
$\Tw(\bt_0) = \Tw(\bt_1)$ implies $\bt_0 \sim \bt_1$.

For the $4\times 4\times 4$ box
there are $5051532105$ tilings,
$9$ possible values for the twist (from $-4$ to $+4$)
and the set of tilings $\cT(\cR)$ has $93$ equivalent classes under $\approx$.
The number of tilings of twist $0$ is $4413212553$,
forming one giant connected component with $4412646453$ tilings,
two components with $283044$ tilings each
and $12$ isolated tilings.
The number of tilings of twist $1$ is $310188792$,
forming one giant component with $310185960$ tilings
and $12$ components with $236$ tilings each.
In this example a brute force computation verifies that
$\Tw(\bt_0) = \Tw(\bt_1)$ implies
$\bt_0 \ast \bt_{\nvert,2} \approx \bt_1 \ast \bt_{\nvert,2}$
(and therefore $\bt_0 \sim \bt_1$).
In other words, if $\Tw(\bt_0) = \Tw(\bt_1)$ then,
after adding two floors of vertical dominoes,
the two resulting tilings are flip connected
(compare with Theorem \ref{theo:M}).
For the $4\times 4\times N$ box, $N$ a multiple of $4$,
the possible values of the twist are $[-\frac32 N+2,\frac32 N-2] \cap \ZZ$
(see Figure \ref{fig:rocket} and Example \ref{example:cD44}).

\begin{figure}[ht]
\begin{center}
\includegraphics[scale=0.275]{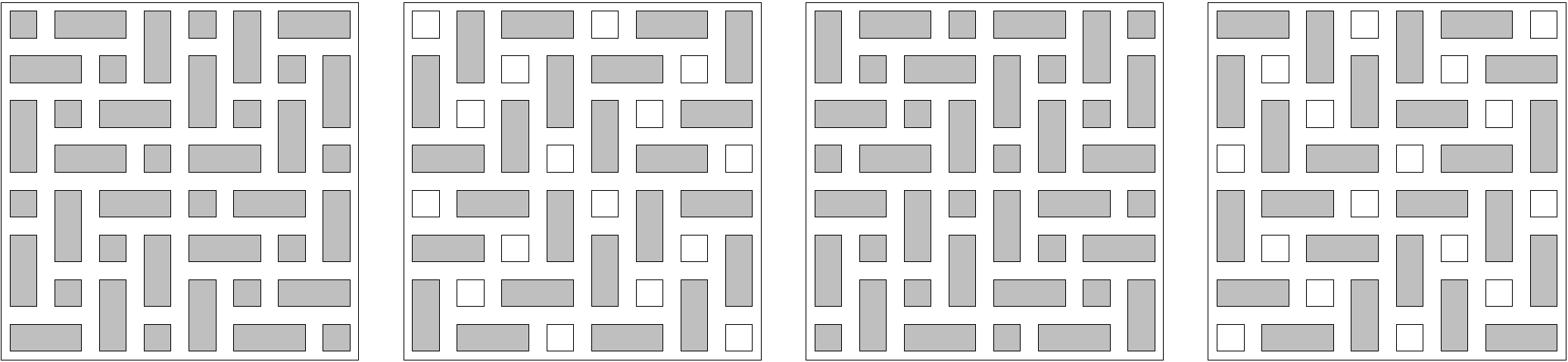}
\end{center}
\caption{This tiling $\bt_1$ satisfies $\Tw(\bt_1) = 0$ and admits no flips.}
\label{fig:noflip8}
\end{figure}

It is not easy to extend these computations to larger boxes
but there are examples of isolated tilings
(i.e., with no flips),
including tilings of twist $0$.
Figure \ref{fig:noflip8} shows an example of a tiling $\bt_1$
of $\cR_4$ for $\cD = [0,8]^2$;
the tiling $\bt_1$ admits no flips and satisfies $\Tw(\bt_1) = 0$.
If $\bt_2$ is obtained from $\bt_1$ by rotating $90^{\circ}$
in $xy$ then $\Tw(\bt_{\nvert,4}) = \Tw(\bt_1) = \Tw(\bt_2) = 0$ and
$\bt_{\nvert,4} \not\approx \bt_1
\not\approx \bt_2 \not\approx \bt_{\nvert,4}$
(since both $\bt_1$ and $\bt_2$ are isolated);
it is not hard to verify (by brute force) that
$\bt_{\nvert,4} \sim \bt_1 \sim \bt_2 \sim \bt_{\nvert,4}$.

As proved in \cite{saldanhaejc}, the number of tilings of $\cR_N$
per value of the twist approaches a normal distribution
when the basis $\cD$ is kept fixed and $N$ goes to infinity.
This may seem surprising given the results 
for the cube $4\times 4\times 4$,
but this region is too small for us to see the patern.
The normal distribution is proved in Theorem 4 of \cite{saldanhaejc};
it is also visible in Figure 2 of the same paper.




\section{Floors and plugs}
\label{sect:plug}

In this section, let $\cD \subset \RR^2$ be a fixed but arbitrary
balanced quadriculated disk
(thus $\cD$ is connected and simply connected, with connected interior).
Recall that $\cD$ is nontrivial if at least one unit square
has at least $3$ neighbors (see Figure \ref{fig:disks}).
Let $|\cD|$ be the number of squares of $\cD$
(so that $|\cD|$ is even;
if $\cD$ is nontrivial then $|\cD| \ge 6$).
Rectangles provide us with
a family of examples:
$\cD = [0,L] \times [0,M]$ where
$L, M \in \NN$, $2 \le L \le M$, and $LM$ is even.

For a disk $\cD$, a \emph{plug} is
a balanced quadriculated subregion $p \subseteq \cD$.
In other words, a plug $p$ is a union of finitely many unit squares
$[a,a+1] \times [b,b+1] \subset \cD$ (with $(a,b) \in \ZZ^2$)
such that the numbers of black and white squares in $p$ are equal.
From a graph point of view, $p$ is a balanced induced subgraph of $\cD$.
Let $\cP$ be the set of \emph{plugs} for $\cD$.
We allow for the \emph{empty plug} $\emptyplug = \emptyset \in \cP$
and the \emph{full plug} $\fullplug = \cD \in \cP$.
Each plug $p \in \cP$ has a complement $p^c \in \cP$:
the interiors of $p$ and $p^c$ are disjoint and $p \cup p^c = \cD$;
thus, for instance, $\fullplug = \emptyplug^c$.
We have $|\cP| = \binom{2k}{k}$, $k = |\cD|/2$.

Given plugs $p, \tilde p \in \cP$ and $N_0, N_1 \in \ZZ$, $N_1 > N_0 + 2$,
we define the {\em cork} $\cR_{N_0,N_1;p,\tilde p}$ 
to be the balanced cubiculated region
\[ \cR_{N_0,N_1;p,\tilde p} =
(\cD \times [N_0,N_1]) \smallsetminus
\interior((p \times [N_0,N_0+1]) \cup (\tilde p \times [N_1-1,N_1])). \]
Thus, $\cR_{N_0,N_1;p,\tilde p}$ is obtained from
$\cR_{N_0,N_1} = \cD \times [N_0,N_1]$ by removing
$p_0$ from the bottom floor $N_0+1$ (i.e., $\cD \times [N_0,N_0+1]$)
and $p_1$ from the top floor $N_1$.
Notice that $\cR_{0,N;\emptyplug,\emptyplug} = \cR_N$
and $\cR_{0,N;\fullplug,\fullplug}$ is a translated copy of $\cR_{N-2}$.

For disjoint $p, \tilde p \in \cP$,
consider the planar region
$\cD_{p, \tilde p} = \cD \smallsetminus (p \sqcup \tilde p)$
(we make here the usual abuse of neglecting boundaries).
The region $\cD_{p, \tilde p}$ is balanced,
but possibly neither connected not tileable;
we may also have $\cD_{p, \tilde p}$ empty.

Consider $p_{N_0}, p_{N_1} \in \cP$ and the
cork $\cR = \cR_{N_0,N_1; p_{N_0}, p_{N_1}}$.
A tiling $\bft \in \cT(\cR)$
can be described as a sequence
of \emph{floors} and plugs:
\begin{equation}
\label{equation:floorsandplugs}
(p_{N_0},f_{N_0+1},p_{N_0+1},f_{N_0+2},p_{N_0+2},\ldots
,p_{N_1-1},f_{N_1},p_{N_1}).
\end{equation}
The $j$-th plug $p_j = \plug_j(\bt) \in \cP$
is the union of the unit squares $[a,a+1]\times[b,b+1] \times \{j\}$
contained in $\cD \times \{j\}$ and crossed by a vertical domino
$[a,a+1]\times [b,b+1] \times [j-1,j+1]$ in the tiling $\bt$.
Similarly, the reduced $j$-th floor
$f^{\ast}_j = \floorop^{\ast}_j(\bt) \in \cT(\cD_{p_{j-1},p_{j}})$
corresponds to the set of horizontal dominoes of $\bft$ contained
in $\cD \times [j-1,j]$.
Notice that $p_{j-1}$ and $p_{j}$ are disjoint (for all $j$).
Figure \ref{fig:floorplug} shows a tiling represented as a sequence
of reduced floors and plugs.
The (full) $j$-th floor is
$f_j = \floorop_j(\bt) = (p_{j-1},f^{\ast}_j,p_j)$,
so that the representation in Equation \ref{equation:floorsandplugs}
is redundant.
Figures
\ref{fig:twist2}, \ref{fig:flipexample}, \ref{fig:trit}, \ref{fig:442}
(and others)
show examples of tilings represented as sequences of (full) floors.

\begin{figure}[h]
\begin{center}
\includegraphics[scale=0.275]{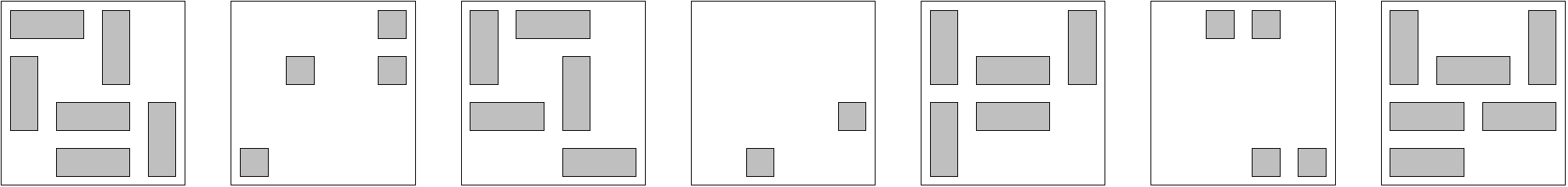}
\end{center}
\caption{A tiling of the box $4\times 4\times 4$
as a sequence of reduced floors and plugs.}
\label{fig:floorplug}
\end{figure}

Given a floor $f = (p_0,f^\ast,p_1)$,
define $f^{-1} = (p_1,f^\ast,p_0)$, also a valid floor;
we say that $f$ and $f^{-1}$ differ by orientation only
(see also Section \ref{sect:groupcomplex}).
A floor $f = (p_0,f^\ast,p_1)$ is \emph{vertical}
if $f^\ast = \emptyset$
(the empty tiling of the empty region $\cD_{p_0,p_1}$),
or, equivalently, if $p_1 = p_0^c$.
From the point of view of tilings,
a floor is vertical if all dominoes intersecting it are vertical.

\begin{lemma}
\label{lemma:trivialdisk}
If $\cD$ is a trivial quadriculated disk and 
$\bt_0, \bt_1 \in \cT(\cR_N)$ then
$\bt_0 \approx \bt_1$.
\end{lemma}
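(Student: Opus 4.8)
The plan is to first pin down the possible shapes of a trivial disk and then dispose of the handful of cases that occur. Since $\cD$ is balanced, $|\cD|$ is even; if $\cD$ is trivial then either $|\cD|\le 4$ or every square of $\cD$ has at most two neighbours. In the second case the adjacency graph of $\cD$ has maximum degree at most $2$ and, being connected, is a single path or a single cycle; a cycle of length more than $4$ would surround a bounded complementary region, contradicting the simple connectedness of $\cD$, and the $4$-cycle forces $\cD=[0,2]\times[0,2]$. In the case $|\cD|\le 4$, $\cD$ is a domino or a tetromino, and since it is balanced it is not the T-tetromino; the remaining shapes are again $[0,2]\times[0,2]$ or a path. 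So in all cases $\cD$ is either the square $[0,2]\times[0,2]$ or a \emph{snake}: squares $s_1,\dots,s_n$ ($n$ even) with $s_i$ adjacent to $s_j$ iff $|i-j|=1$. I would treat the two cases separately.

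If $\cD$ is a snake with $n$ squares, then the bipartite graph underlying $\cR_N=\cD\times[0,N]$ is isomorphic to that of the rectangle $[0,n]\times[0,N]$, the isomorphism carrying the vertical dominoes of $\cR_N$ to one family of dominoes of the rectangle and the dominoes lying along the snake to the other; this gives a bijection $\cT(\cR_N)\cong\cT([0,n]\times[0,N])$. Since $\cD$ contains no $2\times 2$ block, every flip in $\cR_N$ takes place inside some sub-slab $(s_i\cup s_{i+1})\times[0,N]$, and under the bijection such flips correspond exactly to the planar flips of $[0,n]\times[0,N]$, and conversely. As $[0,n]\times[0,N]$ is a simply connected planar region, any two of its tilings are joined by planar flips (\cite{thurston1990},\cite{saldanhatomei1995}); hence any two tilings of $\cR_N$ are joined by flips.

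For $\cD=[0,2]\times[0,2]$, so that $\cR_N$ is the box $2\times2\times N$, I would argue by induction on $N$, the case $N\le 1$ being immediate (for $N=1$ there are exactly two tilings, joined by a single flip). For $N\ge 2$ the crux is the claim that every tiling $\bt$ of $\cR_N$ is flip-connected to one with $\plug_1(\bt)=\emptyplug$, i.e.\ one in which no domino meets the plane $z=1$: the plug $\plug_1(\bt)$ is a balanced plug of $[0,2]\times[0,2]$, hence empty, or the full plug, or one of the four ``edge'' plugs, and in each non-empty case the two or four vertical dominoes meeting $z=1$ occupy exactly the two bottom layers of their columns, so flipping each such adjacent pair of columns into a stacked pair of horizontal dominoes removes those dominoes from the plane $z=1$ without creating new crossings; after at most two such flips one reaches $\bt'$ with $\plug_1(\bt')=\emptyplug$. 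Such a $\bt'$ splits as $\bt'=\bt'_{\mathrm{bot}}\ast\bt'_{\mathrm{top}}$, with $\bt'_{\mathrm{bot}}$ one of the two (flip-equivalent) tilings of the single layer $2\times2\times1$ and $\bt'_{\mathrm{top}}$ a tiling of $2\times2\times(N-1)$; given $\bt_0,\bt_1\in\cT(\cR_N)$ one reduces both to this form, joins the bottom layers by a flip if necessary, and joins the top parts using the inductive hypothesis for $2\times2\times(N-1)$ (those flips lift to flips of $\cR_N$ that fix the bottom layer), whence $\bt_0\approx\bt_1$.

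The genuinely three-dimensional point — and the step I expect to require the most care — is this last case: the box $2\times2\times N$ is not of the form ``planar disk $\times$ interval'', so it escapes the snake argument, and one must instead lean on the very restricted list of balanced plugs of the $2\times2$ square to manufacture the separating layer. The remaining details (legality of the uncrossing flips and the absence of new crossings, the small base cases, and the exactness of the flip correspondence in the snake case) are routine.
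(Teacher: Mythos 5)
Your proposal is correct and takes essentially the same route as the paper: you classify trivial disks into a snake (path) or the $2\times2$ square, unfold the snake case so that $\cR_N$ becomes a planar rectangle with flips corresponding exactly and then invoke the two-dimensional connectivity results of Thurston and Saldanha--Tomei, and you handle $\cD=[0,2]\times[0,2]$ by exploiting the six balanced plugs to remove vertical dominoes by vertical flips. The only cosmetic difference is in the $2\times2$ case, where the paper flips away adjacent pairs of vertical dominoes at every level at once (then finishes with planar flips within floors), while you induct on $N$ by clearing $\plug_1(\bt)$ and peeling off the bottom floor; both arguments rest on the same observation about plugs of the $2\times2$ square.
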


Recall that a disk $\cD$ is non trivial if at least one square
has at least three neighbors (see Figure \ref{fig:disks}).

\begin{proof}
If $\cD$ is not the $2\times 2$ square
then, as a graph, $\cD$ is isomorphic to $[0,1] \times [0,M]$
and therefore, as a graph,
$\cR_N$ is isomorphic to the quadriculated disk $[0,M] \times [0,N]$.
In other words, we have two interpretations of the same graph,
one of them 2D, the other 3D.
The meaning of flips is the same in both interpretations.
We then know from \cite{thurston1990} and \cite{saldanhatomei1995}
that $\bt_0 \approx \bt_1$.

If $\cD = [0,2]\times [0,2]$ then there are $6$ plugs:
in all cases, vertical dominoes can be matched in adjacent pairs.
Thus, a few vertical flips take any tiling to a tiling
with no vertical dominoes and then to a base tiling.
\end{proof}


\section{Tilings of corks}
\label{sect:cork}

This section is about Lemma \ref{lemma:cork},
which states that, for sufficiently large $N$,
the corks $\cR_{0,N;\emptyplug,p}$, $\cR_{0,N;p,\emptyplug}$ admit tilings.
Figure \ref{fig:cork} shows an example.


\begin{figure}[h]
\begin{center}
\includegraphics[scale=0.275]{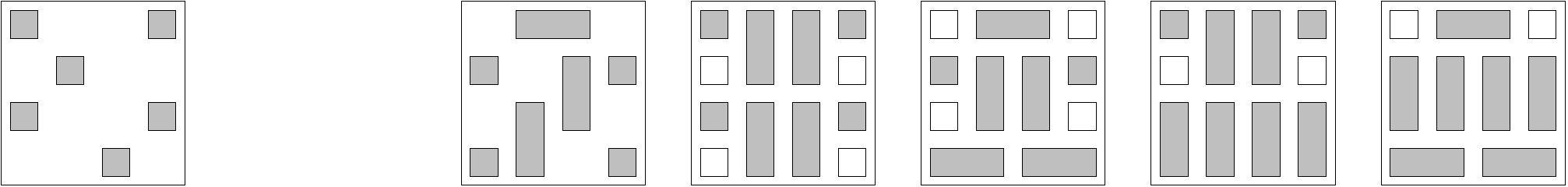}
\end{center}
\caption{A plug $p$ and a tiling of the cork $\cR_{5;p,\emptyplug}$.}
\label{fig:cork}
\end{figure}





Given $p \in \cP$, let $|p|$ be the number of squares in $p$.
If $N_1 - N_0$ is even and $p \in \cP$ 
then there exists a unique tiling
$\bt_{\nvert} \in \cT(\cR_{N_0,N_1;p,p})$ 
such that all floors are vertical.

For a balanced quadriculated disk $\cD$
and a plug $p \in \cP$,
consider the cork $\cR_{-N,N;p,p}$;
a tiling $\bt \in \cT(\cR_{-N,N;p,p})$
is {\em even} if $\bt$ is of the form:
\begin{equation}
\label{eq:bt0}
\bt = (p,f^{\ast}_{N},p_{N-1},f^{\ast}_{N-1}, \ldots,
p_1,f^{\ast}_1,p_0,f^{\ast}_1,p_1,\ldots,
f^{\ast}_{N-1},p_{N-1},f^{\ast}_N,p),
\end{equation}
i.e., if the tiling is symmetric with respect to
the reflection on the $xy$ plane.
An example of an even tiling is the vertical tiling
$\bt_{\nvert} \in \cT(\cR_{-N,N;p,p})$.

\begin{lemma}
\label{lemma:eventiling}
If a tiling $\bt \in \cT(\cR_{-N,N;p,p})$ is even then
$\bt \approx \bt_{\nvert}$.
\end{lemma}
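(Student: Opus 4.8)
The plan is to argue by induction on the "width" of the even tiling, measured by how far from the central $xy$-plane the first non-vertical floor appears. Write the even tiling $\bt$ as in Equation \eqref{eq:bt0}. Since $\bt$ is symmetric about the $xy$-plane, it is enough to produce, using flips, a new even tiling $\bt'$ which agrees with $\bt$ outside a thin symmetric slab around $z = 0$ and in which the central two floors $f^{\ast}_1$ are vertical; then $\bt'$ has strictly fewer non-vertical reduced floors than $\bt$ (counted on one side), and by induction $\bt' \approx \bt_{\nvert}$, hence $\bt \approx \bt_{\nvert}$. The base case, where every reduced floor is empty, is exactly $\bt = \bt_{\nvert}$.

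The first step is to understand the innermost block of $\bt$, namely the slab $\cD \times [-1,1]$ with plug $p_1$ removed from both the top and the bottom floor. In the notation of Section \ref{sect:plug} this slab is tiled by $(p_1, f^{\ast}_1, p_0, f^{\ast}_1, p_1)$. The horizontal dominoes here all lie in $\cD \times [-1,0] \cup \cD \times [0,1]$, and by symmetry the floor $f^{\ast}_1 \in \cT(\cD_{p_0,p_1})$ determines the whole slab. The key observation is that the planar region $\cD_{p_0,p_1} = \cD \smallsetminus (p_0 \sqcup p_1)$ is tileable (it carries $f^{\ast}_1$), and any two tilings of a quadriculated disk-with-holes are reached from one another by planar flips — but here I only need something weaker: I want to flip $\bt$, inside the slab $\cD \times [-1,1]$ together with possibly one more floor on each side for maneuvering room, so as to make the two central floors vertical. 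Concretely, each horizontal domino in $f^{\ast}_1$, say $d \subset \cD \times [-1,0]$, has a mirror partner $d' \subset \cD \times [0,1]$ directly above it; the pair $\{d, d'\}$ is a flippable pair of parallel adjacent dominoes, and the flip replaces them by two vertical dominoes occupying the same $2\times1$ footprint, one in $\cD\times[-1,0]$–style position now rising to $[-1,1]$... wait — a flip of $\{d,d'\}$ produces two vertical dominoes $[a,a+1]\times[b,b+1]\times[-1,1]$ occupying the footprint of $d$. Performing this flip for every domino of $f^{\ast}_1$ simultaneously (they have disjoint footprints, so the flips are independent) converts $(p_1, f^{\ast}_1, p_0, f^{\ast}_1, p_1)$ into $(p_1, \emptyset, p_1^{c}, \emptyset, p_1)$: the two central floors are now vertical. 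Crucially this new tiling is still even, and its reduced floors are $\emptyset, \emptyset, f^{\ast}_2, \dots$ on one side — one fewer non-vertical floor.

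Iterating, after finitely many rounds of flips I reach $\bt_{\nvert}$, which proves the lemma. The step I expect to require the most care is checking that the simultaneous flips on the central floor are genuinely legal flips in $\cT(\cR_{-N,N;p,p})$ — i.e. that each mirror pair $\{d, d'\}$ really is a pair of \emph{parallel and adjacent} dominoes with no obstruction, and that after the flip the resulting configuration is still a tiling of the same cork (in particular that $p_1^{c}$, the new central plug, is a legitimate plug, which holds because $p_0 = p_1^{c}$ already by disjointness of $p_0, p_1$ inside the balanced disk — here one uses that $\cD_{p_0,p_1}$ being the support of a tiling forces $p_0 \sqcup p_1$ together with that tiling to exhaust $\cD$, so $|p_0| + |p_1| = |\cD|$ and, combined with the matching structure, $p_0 = p_1^c$). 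A secondary point to verify is that no extra vertical space is needed: all flips take place strictly within $\cD \times [-1,1] \subset \cR_{-N,N;p,p}$, so we stay inside the given cork throughout, and the induction decreases a well-defined nonnegative integer (the number of non-empty reduced floors with positive index), terminating at $\bt_{\nvert}$.
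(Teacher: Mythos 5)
Your first round is correct and coincides with the paper's opening move: each horizontal domino of $f^{\ast}_1$ and its mirror image are stacked, hence form an adjacent parallel pair, and flipping all of them makes the central two floors vertical, yielding the even tiling $(p,f^{\ast}_N,\ldots,p_2,f^{\ast}_2,p_1,\emptyset,p_1^c,\emptyset,p_1,f^{\ast}_2,p_2,\ldots,p)$. The gap is in the continuation. Your inductive step is only defined when the innermost reduced floor is non-empty; for the tiling you have just produced (or for any even tiling whose innermost non-vertical floor is $f^{\ast}_k$ with $k\ge 2$) the two mirrored copies of a domino $d\in f^{\ast}_k$ lie in $\cD\times[-k,-k+1]$ and $\cD\times[k-1,k]$, separated by $2(k-1)$ vertical floors, so they are not adjacent and the single ``mirror-pair flip'' is unavailable. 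Consequently the measure you propose (number of non-empty reduced floors on one side, equivalently the distance of the first non-vertical floor to the plane $z=0$) does not decrease under any move you have described, and the induction stalls after one round; relatedly, your claim that all flips take place inside $\cD\times[-1,1]$ cannot hold for the full argument, since the dominoes of $f^{\ast}_2,\ldots,f^{\ast}_N$ must eventually be destroyed. The paper supplies exactly the missing mechanism: for each domino $d$ of $f^{\ast}_2$ (with squares $s_a,s_b$) one performs three flips inside the column $d\times[-2,2]$ --- flip the vertical dominoes $s_a\times[-1,1]$ and $s_b\times[-1,1]$ into two horizontal copies of $d$, then flip each against the adjacent mirrored copy of $d$ --- and in general $2k-1$ flips per domino of $f^{\ast}_k$, working outward until all floors are vertical. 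Your argument needs this cascade (or an appeal to the two-flip move used in Lemma \ref{lemma:movevert} to slide the mirrored copies together through the vertical background) to be complete.

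A secondary slip: to check that the post-flip configuration is a tiling of the same cork you assert that $p_0=p_1^c$ ``already''; this is false whenever $f^{\ast}_1\neq\emptyset$, since $|p_0|+|p_1|+2|f^{\ast}_1|=|\cD|$. What is true, and all you need, is that the new central plug is $p_0$ enlarged by the footprints of the flipped dominoes of $f^{\ast}_1$, which equals $p_1^c$, and the complement of a plug is automatically a plug.
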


\begin{proof}
Begin with $\bt_0 = \bt$ as in Equation \ref{eq:bt0} above.
Performing one vertical flip for each (horizontal) domino in $f_1^{\ast}$
takes us to
\[ \bt_1 = (p,f^{\ast}_{N},p_{N-1},f^{\ast}_{N-1}, \ldots,
p_2,f^{\ast}_2,p_1,\emptyset,p_1^c,\emptyset,p_1,f^{\ast}_2,p_2,\ldots,
f^{\ast}_{N-1},p_{N-1},f^{\ast}_N,p); \]
performing three vertical flips for each domino in $f_2^{\ast}$
then takes us to
\[ \bt_2 = (p,f^{\ast}_{N},p_{N-1},f^{\ast}_{N-1}, \ldots,
p_2,\emptyset,p_2^c,\emptyset,p_2,\emptyset,p_2^c,\emptyset,p_2,\ldots,
f^{\ast}_{N-1},p_{N-1},f^{\ast}_N,p); \]
proceed to define a finite sequence
$\bt_0 \approx \bt_1 \approx \cdots \bt_{N-1} \approx \bt_N = \bt_{\nvert}$.
\end{proof}

\begin{lemma}
\label{lemma:flipcork}
Consider a balanced quadriculated disk $\cD$
and a plug $p \in \cP$.
If $N$ is even and $N \ge |p|$ then
there exists an even tiling $\bt \in \cT(\cR_{-N,N;p,p})$
such that $\plug_0(\bt) = \emptyplug$;
in particular, $\bt \approx \bt_{\nvert}$.
\end{lemma}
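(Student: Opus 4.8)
The plan is to build the even tiling $\bt$ in a symmetric way, so it suffices to describe a tiling of the half-cork $\cR_{0,N;p,\emptyplug}$ in which the bottom plug is $p$, the top plug is empty, and — crucially — the plug at the central height $0$ (which in the doubled picture will become $\plug_0(\bt)$) is empty. Concretely, I want to exhibit a tiling $\bt^{+} \in \cT(\cR_{0,N;\emptyplug,p})$ of the cork that sits above the $xy$-plane; reflecting $\bt^{+}$ in the $z=0$ plane gives a tiling $\bt^{-}$ of $\cR_{-N,0;p,\emptyplug}$, and since both have the empty plug at height $0$ the two pieces glue along $\cD \times \{0\}$ with no vertical domino crossing that plane. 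The union is then an even tiling $\bt$ of $\cR_{-N,N;p,p}$ with $\plug_0(\bt) = \emptyplug$, and Lemma \ref{lemma:eventiling} immediately gives $\bt \approx \bt_{\nvert}$.

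So the real content is: for every plug $p \in \cP$ and every even $N \ge |p|$, the cork $\cR_{0,N;\emptyplug,p}$ admits a tiling. Lemma \ref{lemma:cork} already asserts exactly this for $N \ge 2|\cD|$; the point here is the sharper bound $N \ge |p|$ together with the parity constraint (recall $|p|$ is even since $p$ is balanced, so $N \ge |p|$ and $N$ even are compatible). First I would handle the ``extra space'' issue: if a cork $\cR_{0,N_0;\emptyplug,p}$ is tileable and $N_0$ has the right parity, then stacking a block of vertical floors on top — i.e. concatenating with $\bt_{\nvert} \in \cT(\cR_{N_0,N_0+2k;p,p})$ — shows $\cR_{0,N_0+2k;\emptyplug,p}$ is tileable for all $k \ge 0$. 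Hence it is enough to produce a tiling for the smallest admissible height, which I claim is $N = |p|$ (or $N = |p|$ rounded, but $|p|$ is already even).

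For the base case $N = |p|$, the idea is to tile the cork ``one square of $p$ at a time,'' building a staircase: order the squares of $p$; the first vertical domino rises from the bottom into floor $1$ directly above its square in $p$, and so on, using two units of height per square of $p$ to route everything up to the top floor where it exits through $p$. More carefully, since $\cD$ is connected one can choose a spanning structure and push the vertical dominoes upward along it, filling the complementary planar region in each floor by the planar tileability that is available once we are working inside $\cD$ with matching plugs removed — here one invokes the floor/plug description of Section \ref{sect:plug} and the fact that consecutive plugs are disjoint. The main obstacle I anticipate is precisely this base-case construction: making the bookkeeping rigorous so that each reduced floor $\cD_{p_{j-1},p_j}$ is genuinely tileable, and that the count of floors used is exactly $|p|$ and not something larger — this is where one must be careful about which squares of $p$ can be ``resolved'' simultaneously versus sequentially, and it may be cleanest to prove it by induction on $|p|$, peeling off a single boundary-adjacent square of $p$, tiling two floors to carry it up, and invoking the inductive hypothesis for the rest. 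Once the base case is in hand, the stacking argument and the reflection/gluing finish the proof, and $\bt \approx \bt_{\nvert}$ follows from Lemma \ref{lemma:eventiling}.
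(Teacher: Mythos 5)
Your reduction is fine: building $\bt^{+} \in \cT(\cR_{0,N;\emptyplug,p})$, reflecting, and gluing along $\cD \times \{0\}$ does produce an even tiling with $\plug_0(\bt)=\emptyplug$, and Lemma \ref{lemma:eventiling} then gives $\bt \approx \bt_{\nvert}$; this mirrors the symmetric structure of the paper's own construction. (A side remark: you should not lean on Lemma \ref{lemma:cork} even rhetorically, since in the paper its proof is deduced \emph{from} Lemma \ref{lemma:flipcork}.) But the entire content of the lemma is the base construction that you explicitly leave open, and the route you sketch for it does not work. You propose an induction ``peeling off a single boundary-adjacent square of $p$, tiling two floors to carry it up.'' A single square cannot be peeled off: every intermediate plug $\plug_j(\bt)$ of a tiling is necessarily balanced (the region above height $j$ must be tileable), so squares of $p$ can only be resolved in black--white pairs. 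Moreover, resolving a pair is not a matter of placing dominoes ``directly above'' the squares: a column of vertical dominoes under a square of $p$ has to be terminated by horizontal dominoes that shuttle the defect across $\cD$ to the partner square, and the connecting squares must avoid the rest of $p$, otherwise the intermediate floors $\cD_{p_{j-1},p_j}$ need not be tileable (the paper warns that $\cD_{p,\tilde p}$ is in general neither connected nor tileable). Finally, your accounting of ``two units of height per square of $p$'' would require height $2|p|$, not the claimed $|p|$.

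The paper closes exactly these gaps with one key idea you are missing: fix a spanning tree of $\cD$ and, among the squares of $p$, choose a black square $s_0$ and a white square $s_\ell$ at \emph{minimal} tree distance $\ell$; minimality forces the intermediate squares $s_1,\dots,s_{\ell-1}$ of the tree path to lie outside $p$. One then resolves this pair using just two floors (on each half, symmetrically): a shifted ``brick'' pattern of horizontal dominoes $(s_1,s_2),(s_3,s_4),\dots$ in one floor and $(s_0,s_1),(s_2,s_3),\dots$ in the next, all other dominoes vertical. This replaces $p$ by $\tilde p = p \smallsetminus (s_0 \cup s_\ell)$ at height $\pm(N-2)$, and induction on the even number $|p|$ finishes the argument; since each pair costs exactly two floors per half, the total cost is $|p|$ floors, which is precisely why the hypothesis $N \ge |p|$ suffices. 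Without an argument of this kind (choice of a balanced pair whose connecting path avoids $p$, plus the two-floor resolution and the resulting floor count), your base case remains unproved, so the proposal has a genuine gap at the heart of the lemma.
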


The hypothesis $N \ge |p|$ works well with the proof 
but will not be particularly important.
In many examples a weaker hypothesis would also work;
we do not attempt to improve this bound.
The construction in the proof below will be used again,
particularly in the proof of Lemma \ref{lemma:thicksublemma}.

\begin{proof}
Consider a spanning tree for $\cD$;
this spanning tree will be kept fixed during the construction.
Define the {\em distance} between two squares of $\cD$
as measured along the spanning tree;
in particular, the distance is an even integer
if the two squares have the same color
and an odd integer otherwise.
In this proof we denote a horizontal domino
in $\cR_{-N,N;p,p}$
by $(s_a,s_b,c)$ where $s_a$ and $s_b$ are adjacent squares in $\cD$ and
$c \in \{-N+1, \ldots, N\} \subset \ZZ$ denotes the floor;
similarly, a vertical domino is denoted by $(s,c,c+1)$.

The proof is by induction on the even integer $|p|$.
The case $|p| = 0$ (so that $p = \emptyplug$) is trivial
(but perhaps too degenerate if we take $N = 0$).
In general, given $p \in \cP$ with $|p| \ge 2$,
let $\ell$ be the (odd) minimal distance
between a black square and a white square in $p$;
let $s_0$ and $s_\ell$ be squares which realize this minimum  value.
Let $\tilde p = p \smallsetminus (s_0 \cup s_\ell) \in \cP$
(with the usual abuse of notation)
so that $|\tilde p| = |p| - 2$.
Set $\tilde N = N-2$
and consider a tiling $\bt_1 \in \cT(\cR_{-N,N;p,p})$
such that $\bt_{\nvert} \approx \bt_1$,
$\plug_{-\tilde N}(\bt_1) = \plug_{\tilde N}(\bt_1) = \tilde p$
and all floors between $-\tilde N+1$ and $\tilde N$ are vertical.
The restriction of $\bt_1$
to $\cR_{-\tilde N,\tilde N;\tilde p,\tilde p}$ is
$\bt_{\nvert} \in \cT(\cR_{-\tilde N,\tilde N;\tilde p,\tilde p})$.
The desired result then follows by the induction hypothesis.
We are left with contructing $\bt_1$.

Let $s_0, s_1, \ldots, s_\ell$ be the sequence of squares,
read along the spanning tree,
from the black square $s_0$ to the white square $s_\ell$.
Notice that the minimality of $\ell$ implies that the squares
$s_1, \ldots, s_{\ell - 1}$ do not belong to the plug $p$.
The horizontal dominoes of $\bt_1$ are:
\begin{gather*}
(s_1,s_2,-N+1), (s_3,s_4,-N+1), \ldots, (s_{\ell-2},s_{\ell-1},-N+1); \\
(s_0,s_1,-N+2), (s_2,s_3,-N+2), \ldots, (s_{\ell-1},s_{\ell},-N+2); \\
(s_0,s_1,N-1), (s_2,s_3,N-1), \ldots, (s_{\ell-1},s_{\ell},N-1); \\
(s_1,s_2,N), (s_3,s_4,N), \ldots, (s_{\ell-2},s_{\ell-1},N). 
\end{gather*}
All other dominoes of $\bt_1$ are vertical,
completing the construction of $\bt_1$.
It follows from Lemma \ref{lemma:eventiling}
that $\bt \approx \bt_{\nvert}$.
\end{proof}

\begin{example}
\label{example:R2}
Figure \ref{fig:R2} illustrates this construction for $\cD = [0,4]^2$.
The even tiling $\bt_1$, as in the proof above, is constructed
(a few vertical floors are omitted).
See also Figure \ref{fig:penultimate} in Section \ref{sect:thick}.
\end{example}

\begin{figure}[ht]
\begin{center}
\includegraphics[scale=0.275]{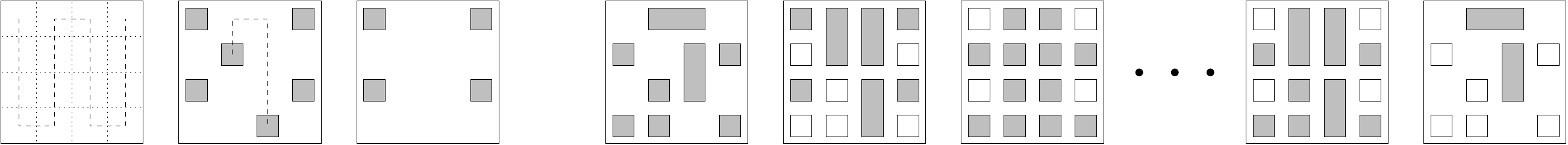}
\end{center}
\caption{A quadriculated disk $\cD$ with a spanning tree,
a plug $p$ and two squares $s_0$ and $s_5$ of opposite colors
at minimal distance $\ell = 5$ along the tree,
the plug $\tilde p$ and the tiling $\bt_1$ of $\cR_{-N,N;p,p}$.}
\label{fig:R2}
\end{figure}




\begin{lemma}
\label{lemma:cork}
Given a quadriculated region $\cD \subset \RR^2$ as above;
if $N \ge 2|\cD|$ and $p \in \cP$ then both corks
$\cR_{0,N;\emptyplug,p}$, $\cR_{0,N;p,\emptyplug}$ admit tilings.
\end{lemma}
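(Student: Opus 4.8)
\emph{Proof plan.}\quad
By reflecting in the $z$-coordinate ($z\mapsto N-z$) the corks $\cR_{0,N;\emptyplug,p}$ and $\cR_{0,N;p,\emptyplug}$ are congruent, so it suffices to tile $\cR_{0,N;\emptyplug,p}$. By the floors-and-plugs description of Section~\ref{sect:plug}, a tiling of this cork is the same thing as a sequence of plugs $\emptyplug=p_0,p_1,\ldots,p_N=p$ in which consecutive plugs $p_{j-1},p_j$ are disjoint and $\cD\setminus(p_{j-1}\sqcup p_j)$ is tileable; I will call such a sequence an \emph{admissible walk of length $N$} from $\emptyplug$ to $p$. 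So the lemma asserts that for every $p\in\cP$ and every $N\ge 2|\cD|$ there is an admissible walk of length exactly $N$ from $\emptyplug$ to $p$.

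Two cheap families of admissible steps do the bookkeeping. A vertical floor gives the step $q\to q^c$ for \emph{every} plug $q$ (here $\cD\setminus(q\sqcup q^c)=\emptyset$), and a horizontal floor gives the step $\emptyplug\to\emptyplug$ (here $\cD\setminus(\emptyplug\sqcup\emptyplug)=\cD$ is tileable). Hence, from any admissible walk $\emptyplug\to\cdots\to p$ of length $m\le N$, prepending $N-m$ copies of the step $\emptyplug\to\emptyplug$ yields one of length exactly $N$. Thus the lemma reduces to a reachability statement: \emph{every plug $p$ can be joined to $\emptyplug$ by an admissible walk of length at most $2|\cD|$.}

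For reachability, begin with the observation that $\emptyplug\to q$ is admissible whenever $\cD\setminus q$ is tileable; in particular, fixing a tiling $\mathbf m$ of $\cD$, every union $q$ of dominoes of $\mathbf m$ is reached in one step. This already handles every plug $p$ with $\cD\setminus p$ tileable (one step), and every \emph{tileable} plug $p$ (two steps: $\emptyplug\to p^c\to p$, valid since $(p^c)^c=p$ is tileable and $p^c\to p$ is a vertical floor). It is not enough in general: there are tileable balanced disks $\cD$ --- for instance a ``dumbbell'' obtained by joining two $2\times2$ blocks along a $2\times1$ neck --- and plugs $p$ for which \emph{neither} $p$ nor $\cD\setminus p$ is tileable, and for a suitable such $p$ one checks directly that no two-step admissible walk $\emptyplug\to q\to p$ exists, so extra vertical room is genuinely needed. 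For the general case the plan is to reach $p$ in a bounded number of floors by combining the one-step moves above with a few ``transport'' floors: split $p$ into a piece whose complement is tileable (reached in one step) and a remainder which is carried to its final location by floors whose horizontal parts are short paths in $\cD$ --- these exist because $\cD$ is connected and simply connected. (For the dumbbell example above, a three-step walk $\emptyplug\to\{\text{two squares of }p\}\to q'\to p$ works, with the two non-vertical floors using exactly a path in $\cD$ joining the other two squares of $p$.) The total number of floors used is linear in $|\cD|$, and the construction can be arranged so as not to exceed $2|\cD|$.

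The main obstacle is precisely this last point: organising the transport so that it terminates for \emph{every} plug $p$ --- including disconnected, non-tileable plugs, where (as the dumbbell shows) the naive ``finish in two steps'' argument provably fails --- and bounding the resulting walk length by $2|\cD|$ uniformly in $p$. Everything else (the reflection, the translation into plug walks, and the padding) is routine, and the hypothesis $N\ge 2|\cD|$ enters only to provide enough floors for the transport together with the padding. (Without assuming $\cD$ tileable the same argument still yields even $N$, using the padding step $\emptyplug\to\fullplug\to\emptyplug$; tileability of $\cD$ is what additionally reaches odd $N$.)
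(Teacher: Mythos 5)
Your reductions are fine as far as they go (the reflection in $z$, the reformulation of a cork tiling as a walk $\emptyplug=p_0,p_1,\ldots,p_N=p$ of pairwise-compatible plugs, and padding a short walk up to length $N$), and they match the paper's point of view; but the proposal stops exactly where the proof has to begin. You explicitly defer ``organising the transport so that it terminates for every plug $p$ \dots{} and bounding the resulting walk length by $2|\cD|$'' as the main obstacle, and that is the entire mathematical content of the lemma. The paper supplies precisely this in Lemma \ref{lemma:flipcork}: fix a spanning tree of $\cD$; choose a black and a white square of $p$ at \emph{minimal} tree distance $\ell$ (minimality forces the $\ell-1$ intermediate squares to lie outside $p$); lay horizontal dominoes along this tree path on two consecutive floors so that the two chosen squares are absorbed; fill everything else with vertical dominoes; and induct on $|p|$. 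Each black--white pair of $p$ costs two floors of the half-cork, so $\emptyplug$ is joined to $p$ within $|p|\le|\cD|$ floors, uniformly in $p$ (including disconnected, non-tileable plugs), and Lemma \ref{lemma:cork} follows by restriction and concatenation. Your ``split $p$ and transport the remainder along short paths in $\cD$'' gestures at this, but without the minimal-distance pairing and the induction there is no termination argument and no length bound, so as written this is a missing proof rather than a proof.

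There is also a hypothesis problem in your padding. The lemma does not assume $\cD$ tileable (the standing assumption of Section \ref{sect:plug} is only that $\cD$ is a balanced quadriculated disk), so your one-step loop $\emptyplug\to\emptyplug$ need not exist, and your fallback $\emptyplug\to\fullplug\to\emptyplug$ changes the length by $2$, so it only produces one parity of $N$; moreover the parenthetical claim that tileability is what is needed to reach odd $N$ is false. The paper obtains odd lengths for \emph{every} balanced $\cD$ through the full plug: applying the construction above with $p=\fullplug$ tiles $\cR_{0,|\cD|;\emptyplug,\fullplug}$, which is literally $\cR_{0,|\cD|-1;\emptyplug,\emptyplug}=\cD\times[0,|\cD|-1]$, i.e.\ a closed walk at $\emptyplug$ of odd length $|\cD|-1$; concatenating with vertical floors and with the even-length corks then covers all $N\ge 2|\cD|$ of both parities. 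This is not a cosmetic point: the odd case for possibly non-tileable $\cD$ is what makes $\cC_{\cD}$ non-bipartite and is what Lemma \ref{coro:A1Npos} relies on.
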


\begin{proof}
From Lemma \ref{lemma:flipcork} there exist tilings
of $\cR_{0,N;\emptyplug,p}$ for all even $N$, $N \ge |\cD|$:
just restrict $\bt$ in the statement of  Lemma \ref{lemma:flipcork} 
to  $\cR_{0,N;\emptyplug,p}$.
In particular, taking $p = \fullplug$, there exists a tiling of
$\cR_{0,|\cD|;\emptyplug,\fullplug} = \cR_{0,|\cD|-1;\emptyplug,\emptyplug}$.
Also, there exists a vertical tiling of $\cR_{0,2;\emptyplug,\emptyplug}$;
by concatenation, there exists a tiling of $\cR_{0,N;\emptyplug,\emptyplug}$
for every $N \ge |\cD|$ (either even or odd).
Again by concatenation, there exists a tiling of
$\cR_{0,N;\emptyplug,p}$ for every $p \in \cP$ and 
every $N \ge 2|\cD|$ (either even or odd).
\end{proof}

\begin{remark}
\label{remark:cork}
Lemma \ref{lemma:cork} gives us an explicit estimate
for the minimum $N_p$ such that $N \ge N_p$
implies that $\cR_{0,N;p,\emptyplug}$ admit tilings.
We are not trying, however, to obtain the best estimate.
A few examples should convince the reader that
$N_p$ can often be taken to be significantly smaller that $2|\cD|$.
\end{remark}



\section{The domino group $G_{\cD}$ and the complex $\cC_{\cD}$}
\label{sect:groupcomplex}

The domino group has already been defined in a combinatorial way
in the Introduction, Equation \eqref{eq:dominogroup}.
Let us recall the definition.


\begin{definition}
\label{definition:dominogroup}
Consider a fixed quadriculated disk $\cD$.
Let $\cT(\cR_{\ast})$ be the set of all tilings
of all regions $\cR_{N} = \cD \times [0,N]$, $N \in \NN^\ast$.
The equivalence relation $\sim$ is defined on  $\cT(\cR_{\ast})$:
let $G_{\cD}$ be the quotient space $\cT(\cR_{\ast})/\sim$.
The set $G_{\cD}$ has a group structure.
Indeed, the identity is the vertical tiling $\bt_{\nvert}$ of $\cR_2$.
The product of two tilings
$\bt_1 \in \cT(\cR_{N_1})$ and $\bt_2 \in \cT(\cR_{N_2})$
is the concatenation $\bt_1\ast \bt_2 \in \cT(\cR_{N_1+N_2})$.
The inverse of $\bt_1 \in \cT(\cR_{N_1})$ is obtained by reflection
on the horizontal plane.
\end{definition}

In this section we construct another of our main objects:
the complex $\cC_{\cD}$.
As we shall see in Equation \eqref{equation:dominogroup},
$G_{\cD}$ equals the fundamental group of $\cC_{\cD}$,
giving us an equivalent definition of the domino group
in another language.

Given a quadriculated disk $\cD$, we construct a $2$-complex $\cC_{\cD}$.
The vertices of $\cC_{\cD}$ are the plugs $p \in \cP$.
We first construct a graph, which is almost (but not quite)
the same thing as constructing the $1$-skeleton of $\cC_{\cD}$.
The (undirected) edges of $\cC_{\cD}$ between two vertices (plugs)
$p_0, p_1 \in \cP$ are valid (full) floors of the form
$f = (p_0, f^{\ast}, p_1)$.
Thus, if $p_0$ and $p_1$ are not disjoint there is no edge between them;
if $p_0$ and $p_1$ are disjoint, there is one edge for each tiling
of $\cD_{p_0,p_1}$.
Notice that if $p_0 \ne p_1$ then
$f = (p_0, f^{\ast}, p_1)$ and $f^{-1} = (p_1, f^\ast, p_0)$
define two orientations of the same edge.
Notice also that there is one loop
from $\emptyplug$ to itself
for each tiling of $\cD$;
there are no other loops in $\cC_{\cD}$.

There is a natural identification between tilings
of the cork $\cR_{N;p_0,p_N}$ 
and paths of length $N$ in $\cC_{\cD}$ from $p_0$ to $p_N$.
Tilings of $\cR_N$ correspond to
closed paths of length $N$ in $\cC_{\cD}$ from $\emptyplug$ to itself.
Indeed, both our usual figures of tilings
(for instance, Figure \ref{fig:noflip8})
and descriptions as lists of plugs and floors
(as in Equation \ref{equation:floorsandplugs})
can be directly interpreted as paths:
each floor is an edge,
the initial vertex of each edge is a plug indicated by white squares
and the final vertex is a plug indicated by black squares.

Loops from $\emptyplug$ to itself
are interpreted in the graph theoretical sense:
if we go from $\emptyplug$ to $\emptyplug$ in one move,
we must specify which loop (i.e., which floor) is used, and nothing else.
In particular, we do not have to specify what ``orientation''
of the loop was used;
consistently, no such ``orientation'' exists for a horizontal floor.
In this sense a graph with loops is not quite the same as a $1$-complex:
this difficulty will be addressed by attaching certain $2$-cells (see below).
Let us stay with the graph theoretical point of view for a little longer.

It follows from Lemma \ref{lemma:cork} that $\cC_{\cD}$ is path connected
and not bipartite:
for sufficiently large $N$ there exist paths of length $N$
from any initial vertex to any final vertex.
Let $A = A_{\cD} \in \ZZ^{\cP \times \cP}$
be the adjacency matrix of $\cC_{\cD}$.
We have $A_{(p,\tilde p)} = |\cT(\cD_{p,\tilde p})|$;
we set $|\cT(\cD_{p,\tilde p})| = 0$
if $p$ and $\tilde p$ are not disjoint.
From the description of tilings of corks as paths in $\cC_{\cD}$ we have
\begin{equation}
\label{eq:count}
|\cT(\cR_{0,N;p_{0},p_{N}})| =
(A^{N})_{(p_0,p_N)} =  
\sum_{(p_1,\ldots,p_{N-1}) \in \cP^{N-1}}
\left( \prod_{1 \le j \le N} |\cT(\cD_{p_{j-1},p_j})| \right).
\end{equation}


\goodbreak

\begin{lemma}
\label{coro:A1Npos}
Consider a balanced quadriculated disk $\cD \subset \RR^2$;
let $A = A_{\cD}$ be the adjacency matrix of $\cC_{\cD}$.
If $N \ge 4|\cD|$ then all entries of $A^N$ are strictly positive.
\end{lemma}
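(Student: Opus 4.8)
The plan is to reduce the statement to Lemma~\ref{lemma:cork}. By Equation~\ref{eq:count}, for any plugs $p_0,p_N\in\cP$ the entry $(A^N)_{(p_0,p_N)}$ equals $|\cT(\cR_{0,N;p_0,p_N})|$, so it suffices to prove that the cork $\cR_{0,N;p_0,p_N}$ admits a tiling whenever $N\ge 4|\cD|$.

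First I would split the height. Write $N=N_1+N_2$ with $N_1=2|\cD|$ and $N_2=N-2|\cD|$; since $N\ge 4|\cD|$ we have $N_1,N_2\ge 2|\cD|$, so both corks appearing below are well defined. Applying Lemma~\ref{lemma:cork} to the plug $p_0$ with height $N_1$ produces a tiling $\bt_0$ of $\cR_{0,N_1;p_0,\emptyplug}$; applying it to the plug $p_N$ with height $N_2$ produces a tiling of $\cR_{0,N_2;\emptyplug,p_N}$, which after translation by $(0,0,N_1)$ becomes a tiling $\bt_1$ of $\cR_{N_1,N;\emptyplug,p_N}$.

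Then I would glue the two tilings. The region tiled by $\bt_0$ is $(\cD\times[0,N_1])\smallsetminus\interior(p_0\times[0,1])$ and the region tiled by $\bt_1$ is $(\cD\times[N_1,N])\smallsetminus\interior(p_N\times[N-1,N])$; because the plug removed at the shared level $N_1$ is empty on both sides, these two sets meet only along the face $\cD\times\{N_1\}$, and the union of their dominoes tiles $(\cD\times[0,N])\smallsetminus\interior(p_0\times[0,1]\cup p_N\times[N-1,N])=\cR_{0,N;p_0,p_N}$. Hence $(A^N)_{(p_0,p_N)}\ge 1$ for every choice of $p_0,p_N\in\cP$, which is the assertion.

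This is essentially a corollary of Lemma~\ref{lemma:cork}, so I do not expect a genuine obstacle; the only point requiring care is the cork bookkeeping, namely verifying that taking the empty plug at the common level $N_1$ is exactly what makes the two tiled regions reassemble into $\cR_{0,N;p_0,p_N}$ with nothing removed at that level. All the real content sits in Lemma~\ref{lemma:cork}, hence in the constructions of Section~\ref{sect:cork}.
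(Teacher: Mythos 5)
Your proposal is correct and follows essentially the same route as the paper: reduce via Equation~\ref{eq:count} to tileability of the cork, then apply Lemma~\ref{lemma:cork} twice (with heights $2|\cD|$ and $N-2|\cD|\ge 2|\cD|$) and concatenate the two tilings at a level where the plug is empty. The only cosmetic difference is that you spell out the gluing bookkeeping that the paper summarizes as ``concatenate,'' which is harmless.
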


\begin{proof}
This is equivalent to saying that for all $N \ge 4|\cD|$
and for all  $p, \tilde p \in \cP$
the cork $\cR_{N;p,\tilde p}$ admits a tiling.
Consider plugs $p, \tilde p \in \cP$.
Use Lemma \ref{lemma:cork} to obtain
$\bt \in \cT(\cR_{0,2|\cD|;p,\emptyplug})$ and
$\tilde\bt \in \cT(\cR_{2|\cD|,N;\emptyplug,\tilde p})$
(the second one requires a translation by $(0,0,2|\cD|)$).
Concatenate $\bt$ and $\tilde\bt$ to obtain the desired tiling
of $\cR_{0,N;p,\tilde p}$.
\end{proof}

We complete the construction of the complex $\cC_{\cD}$
by attaching $2$-cells.
First, we adress the fact that a graph with loops
is not quite the same thing as a $1$-complex.
We solve this by attaching a disk with boundary $f\ast f$
for each floor $f$ of the form $f = (\emptyplug,f^\ast,\emptyplug)$,
i.e., for each loop:
this guarantees that $f$ and $f^{-1}$ are now homotopic.
The other $2$-cells correspond to flips,
and it is convenient to describe separately
horizontal flips, which will correspond to bigons,
and vertical flips, which will correspond to quadrilaterals.
Let $p_0, p_1 \in \cP$ be disjoint plugs;
let $f^{\ast}_a$ and $f^{\ast}_b$ be tilings of $\cD_{p_0, p_1}$
joined by a flip.
Attach
a bigon with vertices $p_0$ and $p_1$ and edges
$f_a = (p_0,f^{\ast}_a,p_1)$ and $f_b = (p_0,f^{\ast}_b,p_1)$;
see example in Figure \ref{fig:hflip}.

\begin{figure}[ht]
\centering
\def\svgwidth{120mm}
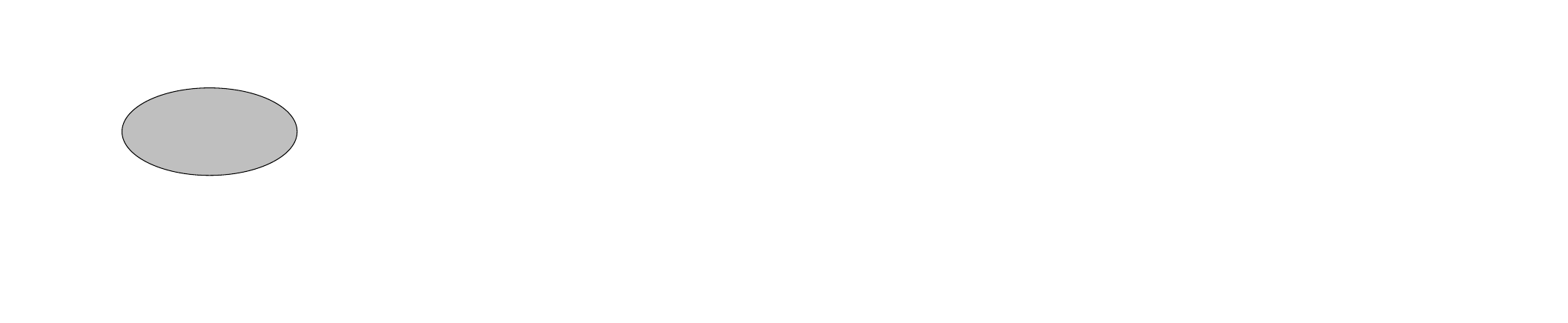
\caption{A horizontal flip defines a $2$-cell in $\cC_{\cD}$.}
\label{fig:hflip}
\end{figure}

Let $p_0, p_1, \tilde p_1, p_2 \in \cP$ be plugs.
Assume that $p_1$ is obtained from $\tilde p_1$ by removing
two adjacent squares;
let $d$ be the domino formed by these two squares.
Assume that $\tilde p_1$ is
disjoint from both $p_0$ and $p_2$;
notice that this implies that $p_1$ is likewise 
disjoint from both $p_0$ and $p_2$.
Let $\tilde f^{\ast}_1$ and $\tilde f^{\ast}_2$
be tilings of $\cD_{p_0,\tilde p_1}$ and $\cD_{\tilde p_1,p_2}$,
respectively.
Let $f^{\ast}_1$ and $f^{\ast}_2$
be tilings of $\cD_{p_0, p_1}$ and $\cD_{p_1,p_2}$
obtained from $\tilde f^{\ast}_1$ and $\tilde f^{\ast}_2$,
respectively,
by adding the domino $d$.
Attach
a quadrilateral with vertices
$p_0, p_1, \tilde p_1, p_2$ and edges
$f_1 = (p_0,f^{\ast}_1,p_1)$,
$\tilde f_1 = (p_0,\tilde f^{\ast}_1,\tilde p_1)$,
$f_2 = (p_1,f^{\ast}_2,p_2)$ and
$\tilde f_2 = (\tilde p_1, \tilde f^{\ast}_2, p_2)$;
see an example in Figure \ref{fig:vflip}.

\begin{figure}[ht]
\centering
\def\svgwidth{120mm}
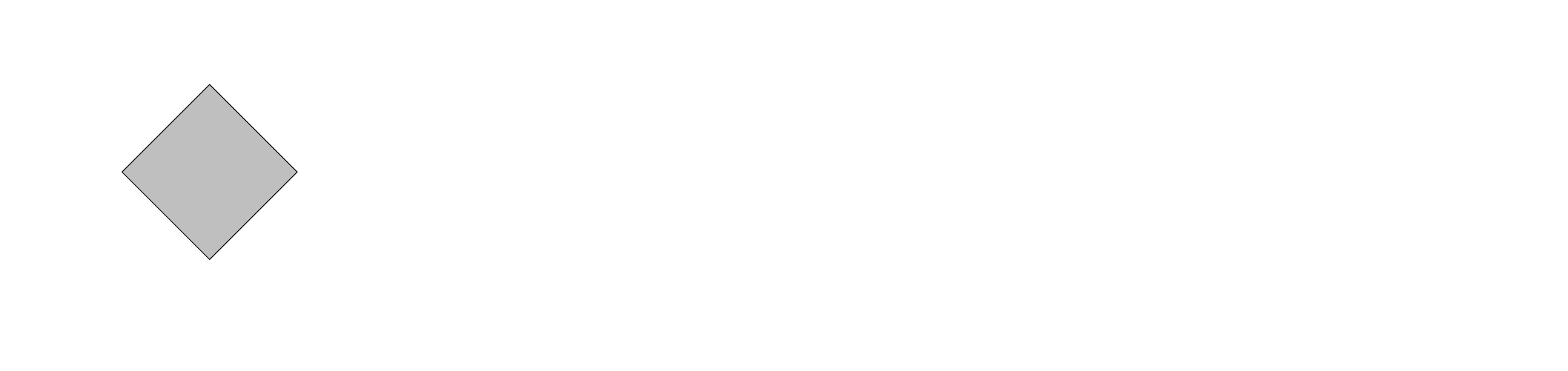
\caption{A vertical flip defines a $2$-cell in $\cC_{\cD}$.}
\label{fig:vflip}
\end{figure}

The above identification between tilings of corks
and paths in $\cC_{\cD}$ gives us a natural concatenation operation.
Let $p_0, p_1, p_2 \in \cP$ be plugs.
Let $\cR_{01} = \cR_{N_0,N_1;p_0,p_1}$, $\cR_{12} = \cR_{N_1,N_2;p_1,p_2}$
and $\cR_{02} = \cR_{N_0,N_2;p_0,p_2}$ be corks.
If $\bt_{01} \in \cT(\cR_{01})$ and $\bt_{12} \in \cT(\cR_{12})$
are tilings,
concatenate them to define
$\bt_{02} = \bt_{01} \ast \bt_{12} \in \cT(\cR_{02})$.
The dominoes of $\bt_{02}$ are:
the dominoes of $\bt_{01}$,
the dominoes of $\bt_{12}$,
vertical dominoes of the form $s \times [N_1-1,N_1+1]$
for $s \subset \cD$ a square in the plug $p_1$.
We thus have
$\ast: \cT(\cR_{01}) \times \cT(\cR_{12}) \to \cT(\cR_{02})$,
\[ \cT(\cR_{01}) \ast \cT(\cR_{12}) =
\{ \bt \in \cT(\cR_{02}) \;|\; \plug_{N_1}(\bt) = p_1 \}. \]

\begin{lemma}
\label{lemma:movevert}
Let $\cD$ be a balanced quadriculated disk;
let $p_0, p_1 \in \cP$ be plugs;
let $\bt \in \cT(\cR_{N_0,N_1;p_0,p_1})$ be a tiling.
Let
$\bt_{\nvert,p_0} \in \cT(\cR_{N_0-2,N_0;p_0,p_0})$
and 
$\bt_{\nvert,p_1} \in \cT(\cR_{N_1,N_1+2;p_1,p_1})$
be the vertical tilings in the corks above.

Let $\tilde\bt_0  = \bt_{\nvert,0} \ast \bt \in \cT(\cR_{N_0-2,N_1;p_0,p_1})$,
 $\tilde\bt_1 = \bt \ast \bt_{\nvert,1} \in \cT(\cR_{N_0,N_1+2;p_0,p_1})$.
Apply a translation by $(0,0,2)$ (and abuse notation):
$\tilde\bt_0 \in \cT(\cR_{N_0,N_1+2;p_0,p_1})$.
Then $\tilde\bt_0 \approx \tilde\bt_1$.
\end{lemma}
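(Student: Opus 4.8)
The plan is to reduce the statement to the single-floor case and then resolve that case by a purely local flip argument. Write $\bt = f \ast \bt'$, where $f = (p_0, f^\ast, q)$ is the bottom floor of $\bt$ and $\bt' \in \cT(\cR_{N_0+1,N_1;q,p_1})$ is the remainder, and argue by induction on the number $N_1 - N_0$ of floors of $\bt$. Since flips performed inside a sub-cork do not affect the dominoes outside it, the relation $\approx$ is compatible with the concatenation $\ast$ (and trivially with vertical translation), so it suffices to establish
\[ \bt_{\nvert,p_0} \ast f \;\approx\; f \ast \bt_{\nvert,q}, \qquad \bt_{\nvert,q} \ast \bt' \;\approx\; \bt' \ast \bt_{\nvert,p_1}. \]
The second is the lemma for $\bt'$, which has one fewer floor, and follows from the induction hypothesis (the base case $N_1 - N_0 = 1$ being exactly the first assertion). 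Thus everything reduces to the \emph{commutation step}: for a single floor $f = (q_0,f^\ast,q_1)$, the tiling $\bt_{\nvert,q_0}\ast f$ of the three-floor cork $\cR_{0,3;q_0,q_1}$ is flip-connected to $f \ast \bt_{\nvert,q_1}$.

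To prove the commutation step, I would write out both tilings explicitly from the definitions of $\ast$ and of the vertical tilings $\bt_{\nvert,q_i}$. A direct check shows that $\bt_{\nvert,q_0}\ast f$ and $f\ast\bt_{\nvert,q_1}$ both contain, over each square $s\in q_0$, the single vertical domino occupying the top two floors, and over each square $s\in q_1$ the single vertical domino occupying the bottom two floors; they \emph{differ only} over $\cD' := \cD_{q_0,q_1}$. There, $\bt_{\nvert,q_0}\ast f$ consists of vertical dominoes in the two lower floors together with the horizontal layer $f^\ast$ on top, whereas $f\ast\bt_{\nvert,q_1}$ consists of the horizontal layer $f^\ast$ at the bottom together with vertical dominoes in the two upper floors. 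So it remains to flip, inside $\cD'\times[0,3]$, the configuration ``verticals below, $f^\ast$ on top'' into ``$f^\ast$ below, verticals on top'', leaving the columns over $q_0$ and $q_1$ untouched.

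This last point is local to each domino of $f^\ast$. For a domino $d = s_1\cup s_2$ of $f^\ast$, the box $d\times[0,3]$ is a $2\times 1\times 3$ cuboid; the first configuration restricts to its tiling $\{s_1\times[0,2],\, s_2\times[0,2],\, d\times[2,3]\}$ and the second to $\{d\times[0,1],\, s_1\times[1,3],\, s_2\times[1,3]\}$. The $2\times1\times3$ box has exactly three tilings, and under flips they form a path whose middle vertex is the tiling $\{d\times[0,1],\, d\times[1,2],\, d\times[2,3]\}$ by three horizontal dominoes; hence the two configurations above are joined by two flips, both supported in $d\times[0,3]$. Performing this pair of flips for every domino $d$ of $f^\ast$ — the boxes $d\times[0,3]$ being pairwise disjoint and exhausting $\cD'\times[0,3]$ — carries $\bt_{\nvert,q_0}\ast f$ to $f\ast\bt_{\nvert,q_1}$ without ever disturbing the columns over $q_0$ or $q_1$, which proves the commutation step and hence the lemma.

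I expect the only real obstacle to be the bookkeeping in the second paragraph: one must track carefully, through the definitions of corks, of the junction dominoes added by $\ast$, and of the interface plugs of the two vertical blocks, exactly which dominoes $\bt_{\nvert,q_0}\ast f$ and $f\ast\bt_{\nvert,q_1}$ consist of, so as to confirm that their difference is confined to $\cD'\times[0,3]$ and has precisely the stated shape. Once that identification is in place, the flip argument over the $2\times1\times3$ boxes is entirely elementary.
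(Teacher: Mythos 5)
Your proposal is correct and is essentially the paper's own argument: the paper also moves each horizontal domino down by two floors via a pair of flips, processing dominoes in increasing order of the $z$ coordinate so that the space below is always occupied by two vertical dominoes. You merely organize this sweep as an induction on the number of floors with an explicit single-floor commutation step, and your local analysis of the $2\times1\times3$ boxes is a careful spelling-out of the same two-flip move.
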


\begin{proof}
In order to transform $\tilde\bt_0$ into $\tilde\bt_1$
it is better to focus on the horizontal dominoes
and think of the vertical dominoes as background.
We have to move every horizontal domino down by two floors.
This is performed in increasing order of the $z$ coordinate,
thus guaranteeing that at the time a horizontal domino must move
the four unit cubes one or two floors below it
are filled in by two vertical dominoes.
Two flips then have the desired effect of moving down
the horizontal domino.
\end{proof}

In the introduction we gave a combinatorial definition of
the equivalence relation $\sim$ which is weaker than $\approx$;
we now extend it to tilings of corks.
Consider $\bt_0 \in \cT(\cR_{0,N_0;p_a,p_b})$ and
$\bt_1 \in \cT(\cR_{0,N_1;p_a,p_b})$ with $N_0$ and $N_1$ of the same parity.
We write $\bt_0 \sim \bt_1$ if and only if
there exist $M_0, M_1 \in 2\NN$ with $N_0 + M_0 = N_1 + M_1$ and
$\bt_0 \ast \bt_{\nvert,0} \approx \bt_1 \ast \bt_{\nvert,1}$
where
$\bt_{\nvert,i}$ is the vertical tiling of $\cR_{N_i,N_i+M_i;p_b,p_b}$.
The following lemma gives an alternative definition
which is topological and natural.



\begin{lemma}
\label{lemma:homotopy}
Consider $\bt_0 \in \cT(\cR_{0,N_0;p_a,p_b})$ and
$\bt_1 \in \cT(\cR_{0,N_1;p_a,p_b})$:
we have $\bt_0 \sim \bt_1$ if and only if the paths
$\bt_0$ and $\bt_1$ from $p_a$ to $p_b$ are homotopic
with fixed endpoints.
\end{lemma}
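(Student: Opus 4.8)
The equivalence $\sim$ is defined combinatorially via flips after attaching vertical padding, while homotopy with fixed endpoints in $\cC_{\cD}$ is a purely topological notion; the lemma asserts these coincide. I would prove the two implications separately, using the dictionary established above between tilings of corks $\cR_{0,N;p_a,p_b}$ and edge-paths of length $N$ in $\cC_{\cD}$ from $p_a$ to $p_b$.

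\textbf{($\sim$ implies homotopic).} Suppose $\bt_0 \sim \bt_1$, so there are $M_0, M_1 \in 2\NN$ with $N_0 + M_0 = N_1 + M_1$ and $\bt_0 \ast \bt_{\nvert,0} \approx \bt_1 \ast \bt_{\nvert,1}$. First, appending a vertical tiling $\bt_{\nvert}$ of $\cR_{N,N+M;p_b,p_b}$ ($M$ even) to a path does not change its homotopy class with fixed endpoints: the corresponding edge-path traverses a loop of even length at $p_b$ built from vertical floors $f = (p_b, \emptyset, p_b^c)$ and $f^{-1} = (p_b^c, \emptyset, p_b)$; each such $f \ast f^{-1}$ backtrack is null-homotopic, and (for the degenerate case $p_b = \emptyplug$) the $2$-cells attached to the loops guarantee $f$ and $f^{-1}$ are homotopic, so the whole appended loop contracts. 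Hence $\bt_i \ast \bt_{\nvert,i}$ is homotopic to $\bt_i$ for $i = 0,1$. Second, a single flip — horizontal or vertical — between two tilings of a cork corresponds, by construction of $\cC_{\cD}$, to two edge-paths differing exactly by pushing across one $2$-cell (a bigon for a horizontal flip, a quadrilateral for a vertical flip), hence the paths are homotopic rel endpoints. Chaining finitely many flips, $\bt_0 \ast \bt_{\nvert,0}$ and $\bt_1 \ast \bt_{\nvert,1}$ are homotopic, and combining with the first observation gives $\bt_0 \simeq \bt_1$.

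\textbf{(homotopic implies $\sim$).} This is the substantive direction. Suppose $\bt_0$ and $\bt_1$, viewed as edge-paths of lengths $N_0, N_1$ from $p_a$ to $p_b$, are homotopic with fixed endpoints. A homotopy in a $2$-complex can be realized by a finite sequence of elementary moves: (i) insertion/deletion of a backtrack $e \ast e^{-1}$, and (ii) replacement of one boundary arc of a $2$-cell by the complementary arc. I would show each elementary move, possibly after padding both paths with extra vertical floors at $p_b$, translates into an $\approx$-move (or a flip sequence) on the corresponding tilings. The $2$-cell moves (ii) are exactly flips by construction, so these are immediate — the only subtlety is that a $2$-cell move may occur in the ``middle'' of the path at some plug $p \ne \emptyplug$, but by the concatenation formula $\cT(\cR_{01}) \ast \cT(\cR_{12})$ this is still a genuine flip on the corresponding cork tiling. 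The backtrack moves (i) are the delicate point: inserting $e \ast e^{-1}$ at an internal vertex $p$ means splicing in a short cork tiling $\bt' \ast (\bt')^{-1}$ of $\cR_{*,*+2;p,p}$ where $\bt'$ realizes the floor $e$. Such a tiling need not equal the vertical one, so this does not obviously preserve the tiling up to $\approx$; however, $\bt' \ast (\bt')^{-1}$ is an \emph{even} tiling of a cork $\cR_{-1,1;p,p}$-type piece (after recentering), so by Lemma \ref{lemma:eventiling} it is flip-connected to the vertical tiling $\bt_{\nvert}$ of that short cork — \emph{provided} there is room, which is why the padding $M_i$ in the definition of $\sim$ is essential. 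Thus inserting a backtrack, followed by straightening via Lemma \ref{lemma:eventiling}, amounts to appending two vertical floors, which is exactly an allowed $\sim$-move. Since a homotopy uses only finitely many elementary moves, the total padding needed is finite, and we obtain $\bt_0 \sim \bt_1$.

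\textbf{Main obstacle.} The crux is the backtrack-insertion step: a naive backtrack in the path corresponds to splicing a nontrivial cork tiling into the middle of $\bt_i$, and one must argue this is $\sim$-trivial. The key tool is Lemma \ref{lemma:eventiling} (even tilings flip to vertical), applied locally; the bookkeeping — recentering the spliced cork, ensuring enough vertical space to apply the lemma, and tracking that finitely many such insertions only add bounded padding — is the part that requires care. I would organize this by first reducing an arbitrary homotopy to a standard ``normal form'' (e.g. free reduction of the edge-word followed by $2$-cell relations), so that backtracks are inserted only in a controlled way and always at internal plugs where a short vertical segment can be grown on demand.
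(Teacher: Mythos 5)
Your strategy coincides with the paper's: the forward implication is the easy one (flips cross $2$-cells, end-padding by vertical floors is a contractible loop), and the backward implication is proved by decomposing a combinatorial homotopy into elementary moves and checking each against $\sim$, with backtrack insertions straightened by Lemma \ref{lemma:eventiling}. Two steps, however, are asserted faster than they can be justified, and they are exactly the points the paper's proof is careful about. First, after straightening the spliced piece $f\ast f^{-1}$ at an internal position $i$ into two vertical floors, you conclude that this ``amounts to appending two vertical floors, which is exactly an allowed $\sim$-move.'' It is not, as written: the definition of $\sim$ only permits vertical padding at the free end (at $p_b$), so you still owe the statement that the tiling with two vertical floors inserted at position $i$ is flip-equivalent to $\bt\ast\bt_{\nvert,2}$. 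That is precisely Lemma \ref{lemma:movevert} (vertical floors can be transported through the tiling by pairs of flips), which the paper invokes at this exact spot; your proposal neither cites it nor supplies a substitute, and without it the chain of equivalences does not close.

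Second, your elementary move (ii), ``replacement of one boundary arc of a $2$-cell by the complementary arc,'' is not always a flip, so it is not ``immediate.'' For the quadrilateral of Figure \ref{fig:vflip} only the splitting exchanging $f_1\ast f_2$ with $\tilde f_1\ast\tilde f_2$ is a vertical flip; the crossing that exchanges $f_1^{-1}\ast\tilde f_1$ with $f_2\ast\tilde f_2^{-1}$, the replacement of a single edge by the other three, the insertion of a full bigon boundary, and the traversal of a loop $2$-cell replacing a floor $(\emptyplug,f^\ast,\emptyplug)$ by its reverse are all instances of (ii) that are not flips of cork tilings. The paper flags these ``unexpected'' crossings and disposes of them either by a case check or by reducing them to flips together with insertions of matching pairs of floors. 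The repair fits inside your own framework --- every such crossing is a composite of backtrack insertions (which you do handle) and one genuine flip crossing --- but as stated the claim that (ii) is ``exactly flips by construction'' is a gap. With these two patches (an explicit appeal to Lemma \ref{lemma:movevert}, and the reduction of the nonstandard $2$-cell crossings) your argument becomes the paper's proof.
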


\begin{proof}
First notice that the moves in the definition of $\sim$
(applying flips, adding or deleting pairs of vertical floors at the endpoint)
are examples of homotopies with fixed endpoints,
proving one implication.

For the other direction, notice that
a homotopy with fixed endpoints between $\bt_0$ and $\bt_1$
allows for flips since in the definition of $\cC_{\cD}$
there are $2$-cells corresponding to flips.
A homotopy allows for an extra operation.
At any vertex (plug) $p_i$ and for any edge (floor)
$f = (p_i,f^\ast,\tilde p)$ we may add two new edges to the path,
thus modifying the path from 
$\bt = (\ldots,f_i,p_i,f_{i+1},\ldots)$ (of length $N$) to
$\tilde\bt = (\ldots,f_i,p_i,f,\tilde p,f^{-1},p_i,f_{i+1},\ldots)$
(of length $N+2$).
We may also do the same operation in reverse.
We must therefore check that $\bt \sim \tilde\bt$.

Indeed, add two vertical floors at the end of $\bt$.
Use Lemma \ref{lemma:movevert} to move (by flips) the two vertical floors
to position $i$, thus obtaining the path (tiling)
$(\ldots,f_i,p_i,f_{\nvert},p_i^c,f_{\nvert},p_i,f_{i+1},\ldots)$
(of length $N+2$).
The above path is equivalent to $\tilde\bt$ by vertical flips,
as discussed in the proof of Lemma \ref{lemma:eventiling}.


Also, the $2$-cells allow for a few other operations which may,
at first, look new and requiring checking.
For instance, the $2$-cell in Figure \ref{fig:vflip}
was introduced to allow us to move from
$(\ldots,p_0,f_1,p_1,f_2,p_2,\ldots)$ to
$(\ldots,p_0,\tilde f_1,\tilde p_1,\tilde f_2,p_2,\ldots)$
(or vice-versa), which is a vertical flip
and therefore consistent with $\sim$ (and even with $\approx$).
This cell also has the perhaps unexpected effect of allowing us to move from
$(\ldots,p_1,f_1^{-1},p_0,\tilde f_1,\tilde p_1, \ldots)$ to
$(\ldots,p_1,f_2,p_2,\tilde f_2^{-1},\tilde p_1, \ldots)$.
It is not hard to verify that this example is also consistent with $\approx$.
A case by case study reveals that all moves are indeed consistent with $\sim$;
alternatively, adding and deleting pairs of matching floors
shows that the previous paragraphs already provide a complete proof.
\end{proof}

Given a balanced quadriculated disk $\cD$,
we defined in the introduction the {\em domino group $G_{\cD}$}:
elements of the group are tilings modulo $\sim$ and
the operation is concatenation.
It follows from Lemma \ref{lemma:homotopy} and
Definition \ref{definition:dominogroup} that
\begin{equation}
\label{equation:dominogroup}
G_{\cD} = \pi_1(\cC_{\cD},\emptyplug),
\end{equation}
the fundamental group of the complex $\cC_{\cD}$
with base point $\emptyplug \in \cP$.

There is a homomorphism $G_{\cD} \to \ZZ/(2)$
taking $\bt \in \cT(\cR_{N})$ to $N \bmod 2$.
The normal subgroup $G^{+}_{\cD} < G_{\cD}$
is the kernel of this homomorphism.
If $\cD$ is tileable, a tiling $\bt_c$ of $\cR_1$
defines an element $c \in G_{\cD}$ of order $2$:
indeed, $\bt_c \ast \bt_c \approx \bt_{\nvert} \in \cT(\cR_2)$.
Let $H = \{e, c\} < G_{\cD}$:
we have that $G_{\cD}$ is then a semidirect product
$G_{\cD} = G^{+}_{\cD} \ltimes H$;
as we shall see, this is sometimes but not always a direct product.

Consider the corresponding double cover $\cC^{+}_{\cD}$.
The set of vertices of $\cC^{+}_{\cD}$ is 
$\cP^{+} = \cP \times \ZZ/(2)$:
its elements are {\em plugs with parity}:
a plug $p$ with the extra information of the parity of its position.
Similarly, an oriented edge of $\cC^{+}_{\cD}$ is a {\em floor with parity},
a pair $(f,k)$ where $k \in \ZZ/(2)$ indicates the parity of its position;
notice that $(f,k)^{-1} = (f^{-1},k+1)$.

It follows from Lemma \ref{lemma:trivialdisk}
that if $\cD$ is trivial than
the domino group $G_{\cD}$ is isomorphic to $\ZZ/(2)$.
As we shall see in the next section,
if $\cD$ is nontrivial then $G_{\cD}$ is infinite.


\section{Twist}
\label{sect:twist}

Following \cite{segundoartigo},
we first recall the of the twist
of a tiling $\bt \in \cT(\cR_N)$
(there are other definitions for other regions,
as discussed in \cite{FKMS}).
For a domino $d$,
let $v(d)  \in \{\pm e_1, \pm e_2, \pm e_3\} \subset \RR^3$
be the unit vector
from the center of the white cube to the center of the black cube of $d$.
For $u \in \{\pm e_1, \pm e_2\}$,
define the \emph{$u$-shade} of
$X \subset \RR^3$ to be
\[ \cS^{u}(X) = \interior((X + [0,+\infty) u) \smallsetminus X);
\quad
X + [0,+\infty) u = \{x+tu; x \in X, t \in [0,+\infty)\}. \]
(The case $u = \pm e_3$,
which is also discussed in \cite{segundoartigo},
will not be considered in the present paper.)
Given a tiling $\bt \in \cT(\cR)$ and
two dominoes $d_0$ and $d_1$ of $t$,
define the {\em effect of $d_0$ on $d_1$ along $u$} as
$\tau^{u}(d_0,d_1) \in \{0,\pm\frac14\}$:
\[ \tau^{u}(d_0,d_1) = 
\begin{cases}
\frac14 \det(v(d_1),v(d_0),u), & d_1 \cap \cS^{u}(d_0) \ne \emptyset, \\
0, & \textrm{otherwise.}
\end{cases} \]
The {\em twist} of $\bt$ is
\[ \Tw(\bt) = \sum_{d_0,d_1 \in \bt} \tau^{u}(d_0,d_1). \]
It is shown in \cite{segundoartigo} that the value of $\Tw(\bt)$
is always an integer and that it does not depend
on the choice of $u \in \{\pm e_1, \pm e_2\}$
(see also Remark \ref{remark:cocycle}).

\begin{figure}[ht]
\centering
\def\svgwidth{114mm}
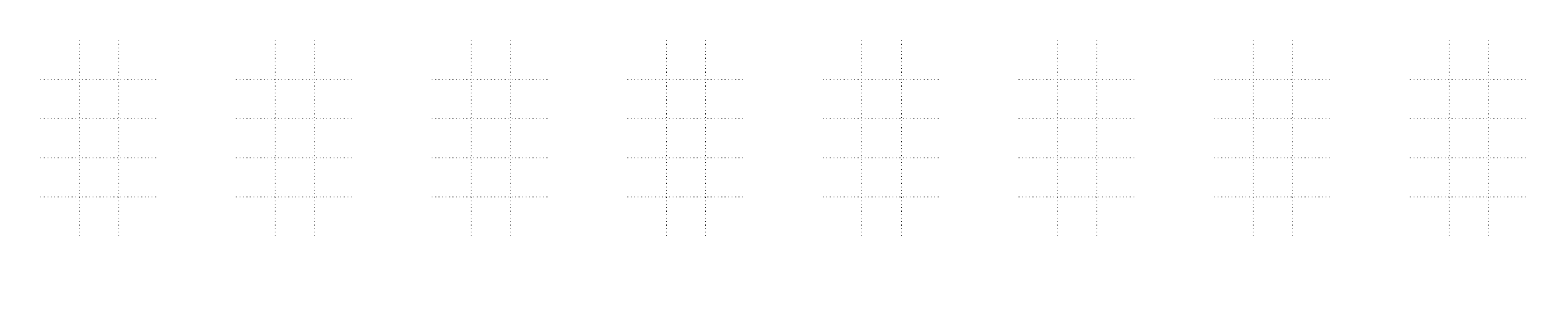
\caption{The value of $4\tau^{e_2}(d,s)$ in eight examples.
The sign depends on three bits:
horizontal position of domino, parity of distance
and relative position.}
\label{fig:tau}
\end{figure}

Notice that $\tau^{u}(d_0,d_1) = 0$ unless,
for some $i \in \{0,1\}$ and for some $j$,
we have that
$d_i$ is horizontal and contained in $\cD \times [j-1,j]$
and $d_{1-i}$ is vertical and intersects $\cD \times [j-1,j]$.
Given a planar domino $d \subset \cD$ and a unit square $s \subset \cD$
with disjoint interiors, define
\[ \tau^u(d,s) =
\tau^u(d \times [0,1], s \times [0,2]) +
\tau^u(s \times [0,2], d \times [0,1]) \in \{0,\pm\frac14\}; \]
examples are given in Figure \ref{fig:tau}.
Given two disjoint plugs $p, \tilde p \in \cD$ and
a planar tiling $f \in \cT(\cD_{p,\tilde p})$
define
\begin{equation}
\label{equation:cocycle}
\tau^u(f,p) = \sum_{d \in f, s \in p} \tau^u(d,s)
\in \frac14\ZZ; \qquad
\tau^u(f; p, \tilde p) = \tau^u(f,\tilde p) - \tau^u(f,p) \in \frac14 \ZZ.
\end{equation}
Notice that $\tau^u(f; \tilde p, p) = -\tau^u(f; p, \tilde p)$.
For $\bt \in \cT(\cR_N)$ we clearly have
\[ \Tw(\bt) = \sum_{0 < j \le N}
\tau^u(\floorop_j(\bt); \plug_{j-1}(\bt), \plug_j(\bt)). \] 
The values of $\tau^u(f,p)$ and $\tau^u(f;p,\tilde p)$
usually depend on the choice of $u$
and neither is necessarily an integer.
Other papers
(including \cite{FKMS}, \cite{primeiroartigo} and \cite{segundoartigo})
discuss several ways to think about the twist;
we shall see another way in this paper.

The twist defines a homomorphism $\Tw: G_{\cD} \to \ZZ$.
Recall that if $\cD$ is trivial then $G_{\cD} \approx \ZZ/(2)$
and therefore the map $\Tw$ is constant equal to $0$.

\begin{remark}
\label{remark:cocycle}
Equation \ref{equation:cocycle} defines $\tau^u$
as a function taking oriented edges of $\cC_{\cD}$ to real numbers. 
In the language of homology, $\tau^u \in C^1(\cC_{\cD};\RR)$.
It is not hard to check that $\tau^u \in Z^1 \subset C^1$,
i.e., that $\tau^u$ is a cocycle;
also, $\tau^{e_1} - \tau^{e_2} \in B^1 \subset Z^1$.
The two cocycles $\tau_{e_1}$ and $\tau_{e_2}$
therefore define the same element of the cohomology:
$[\tau^{e_1}] = [\tau^{e_2}] \in H^1(\cC_{\cD};\RR)$.
It is well known that thare exists a natural isomorphism
$\Hom(\pi_1(X);\ZZ) \approx H^1(X)$.
By following the construction of this isomorphism, we see that
$\Tw \in \Hom(\pi_1(\cC_{\cD});\ZZ)$ is taken to $[\tau^u]$
(for either $u = e_1$ or $u = e_2$).
This provides us with an alternative justification
for the facts that $\Tw$ does not depend on the choice of $u$
and is invariant under flips.
\end{remark}


\begin{lemma}
\label{lemma:nontrivialtwist}
Let $\cD$ be a nontrivial balanced quadriculated disk
and $N \ge 4|\cD| + 3$.
There exist tilings $\bt_0$ and $\bt_1$ of $\cR_N = \cD \times [0,N]$
with $\Tw(\bt_1) = \Tw(\bt_0) + 1$.
In particular, the restriction $\Tw: G^{+}_{\cD} \to \ZZ$
is surjective.
\end{lemma}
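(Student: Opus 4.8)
The plan is to produce $\bt_0$ and $\bt_1$ by splicing a short, explicitly constructed twist--changing piece between two corks that pad it out to height $N$.

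Suppose for the moment that we have plugs $p,q\in\cP$ and two tilings $\bt_0^{\circ},\bt_1^{\circ}\in\cT(\cR_{0,3;p,q})$ of a height--$3$ cork which agree outside a single trit, so that $\Tw(\bt_1^{\circ})=\Tw(\bt_0^{\circ})\pm1$ (recall that a trit changes the twist by $\pm1$; cf.\ Figure \ref{fig:trit}). Given $N\ge 4|\cD|+3$, write $N=a+3+b$ with $a,b\ge 2|\cD|$; Lemma \ref{lemma:cork} provides tilings $\bt_{\downarrow}\in\cT(\cR_{0,a;\emptyplug,p})$ and $\bt_{\uparrow}\in\cT(\cR_{0,b;q,\emptyplug})$ (alternatively, by Lemma \ref{coro:A1Npos}, corks of height $\ge 4|\cD|$ exist between arbitrary pairs of plugs). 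Put
\[
\bt_0=\bt_{\downarrow}\ast\bt_0^{\circ}\ast\bt_{\uparrow},\qquad
\bt_1=\bt_{\downarrow}\ast\bt_1^{\circ}\ast\bt_{\uparrow}\in\cT(\cR_N).
\]
Because the twist is additive under concatenation of corks --- it is the value along a path of the cocycle $\tau^u$, cf.\ Remark \ref{remark:cocycle} --- we get $\Tw(\bt_1)-\Tw(\bt_0)=\Tw(\bt_1^{\circ})-\Tw(\bt_0^{\circ})=\pm1$, and replacing one tiling by its mirror image if necessary we arrange the difference to be exactly $+1$. For the final sentence: $\bt_0^{-1}\ast\bt_1\in\cT(\cR_{2N})$ represents an element of $G^{+}_{\cD}$ of twist $\pm1$, so $\Tw(G^{+}_{\cD})$ is a subgroup of $\ZZ$ containing $1$, hence all of $\ZZ$.

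Thus everything reduces to constructing the short twist--changing piece, and this is where nontriviality of $\cD$ enters. When $\cD$ contains a $2\times 2$ block (a $2\times 3$ block makes the bookkeeping even easier), a translate of the $2\times 2\times 2$ box sits inside $\cR_2$, and inside it one realizes the two sides of a single trit as in Figure \ref{fig:trit}, all other dominoes meeting the three floors involved being vertical; this determines plugs $p,q$ and the pair $\bt_0^{\circ},\bt_1^{\circ}$. The real content is to pass from the mere hypothesis ``some square has at least three neighbours'' to such a configuration.

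I expect this last point to be the main obstacle. A degree--$\ge 3$ vertex does not by itself force a $2\times 2$ block, and a genuinely ``thin'' nontrivial disk --- width $1$ everywhere, so that $\cR_N$ contains no $2\times 2\times 2$ box at all and hence admits no trit anywhere --- must be handled separately: there one has to extract the twist change from the geometry around the degree--$\ge 3$ vertex directly, exhibiting some tiling of a sufficiently tall $\cR_N$ of twist $1$ (or reducing to a configuration in which a trit reappears after adding vertical space). Verifying this is, I believe, the delicate step; the remainder of the argument is essentially the cork lemmas together with additivity of the twist.
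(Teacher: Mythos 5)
Your outer scaffolding (splice a short twist--changing piece between two corks supplied by Lemma \ref{lemma:cork}, use additivity of $\Tw$ along concatenation, take a mirror image to fix the sign, and deduce surjectivity from $\bt_0^{-1}\ast\bt_1$) is sound and close in spirit to how the paper pads its construction to height $N$. But the heart of the lemma is exactly the step you defer: producing, from the bare hypothesis that some square of $\cD$ has at least three neighbours, two tilings whose twists differ by $1$. You only carry this out when $\cD$ contains a $2\times 2$ block, via a trit in a $2\times 2\times 2$ box, and you yourself observe that nontriviality does not give such a block and that a width-one nontrivial disk contains no $2\times 2\times 2$ box at all, so no trit exists anywhere in $\cR_N$. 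That remaining case is not a peripheral nuisance to be ``handled separately''; it is the reason the lemma is stated for nontrivial disks rather than for disks containing a $2\times 2$ (or $2\times 3$) rectangle, and the paper's Figure \ref{fig:girafa} exhibits precisely such a thin disk with tilings of twists $-1,0,1$. As written, the proposal proves a strictly weaker statement.

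For comparison, the paper's proof never uses a trit. It takes a square with neighbours in the directions $e_1$ and $\pm e_2$ and writes down, explicitly, floors $2|\cD|+1$ through $2|\cD|+3$ of two tilings $\bt_0,\bt_1$ (Figure \ref{fig:nontrivialtwist}): the two tilings agree everywhere else (the first $2|\cD|$ and last $\ge 2|\cD|$ floors are tiled identically using Lemma \ref{lemma:cork}, and the remaining squares of the three special floors are filled by vertical dominoes, with two extra detached squares whose existence follows from $\cD$ being balanced). The twist difference is then computed directly from the floor-by-floor cocycle: with $d_j$ the difference of the $\tau^{e_2}$-contributions of the $j$-th floors, one checks $d_j=0$ for all $j$ except $d_{2|\cD|+2}=1$, so $\Tw(\bt_1)-\Tw(\bt_0)=1$. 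Some construction of this kind, extracting the unit twist change from the degree-$\ge 3$ vertex itself rather than from a trit, is what your argument still needs; without it the proof is incomplete.
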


Again, we are not trying to obtain sharp estimates.
For $\cD = [0,2] \times [0,3]$,
there exist tilings of twists $-1$, $0$ and $1$
in $\cR_N = [0,2] \times [0,3] \times [0,N]$ for $N \ge 3$,
as illustrated in Figure \ref{fig:233} for $N = 3$.
As another example,
Figure \ref{fig:girafa} shows tilings of twists $-1$, $0$ and $1$
of $\cR_5$
for another nontrivial quadriculated disk $\cD$.

\begin{figure}[ht]
\begin{center}
\includegraphics[scale=0.275]{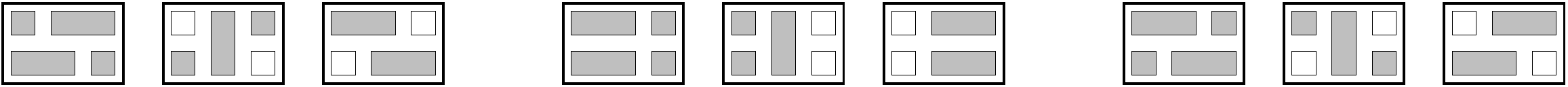}
\end{center}
\caption{Tilings of twist $-1$, $0$ and $1$ of the box
$[0,2] \times [0,3] \times [0,3]$.}
\label{fig:233}
\end{figure}

\begin{figure}[ht]
\begin{center}
\includegraphics[scale=0.275]{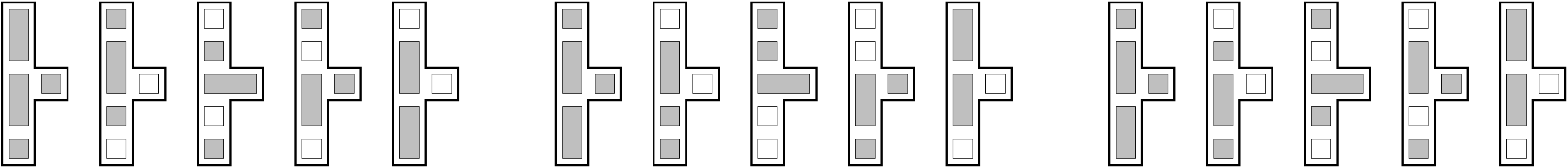}
\end{center}
\caption{Tilings of twist $-1$, $0$ and $1$ of the region
$\cD \times [0,5]$ for a nontrivial quadriculated disk $\cD$.}
\label{fig:girafa}
\end{figure}

\begin{proof}
Assume without loss of generality that at least one square has neighbors
in the directions $e_1$ and $\pm e_2$.
Figure \ref{fig:nontrivialtwist} shows floors
$2|\cD|+1$, $2|\cD|+2$ and $2|\cD|+3$ for tilings $\bt_0$ and $\bt_1$.
The other squares are filled in by vertical dominoes
filling floors $2|\cD|+1$ and $2|\cD|+2$.
Lemma \ref{lemma:cork} guarantees that the first $2|\cD|$ floors
and the last $N - (2|\cD| + 3) \ge 2|\cD|$ can be consistently tiled:
tile them in the same way for $\bt_0$ and $\bt_1$.
Set
\[ d_j = 
\tau^{e_2}(\floorop_j(\bt_1); \plug_{j-1}(\bt_1), \plug_j(\bt_1)) -
\tau^{e_2}(\floorop_j(\bt_0); \plug_{j-1}(\bt_0), \plug_j(\bt_0)). \]
We have $d_j = 1$ for $j = 2|\cD|+2$ and $d_j = 0$ otherwise.
Thus $\Tw(\bt_1) - \Tw(\bt_0) = 1$, completing the proof.
\end{proof}

\begin{figure}[ht]
\begin{center}
\includegraphics[scale=0.275]{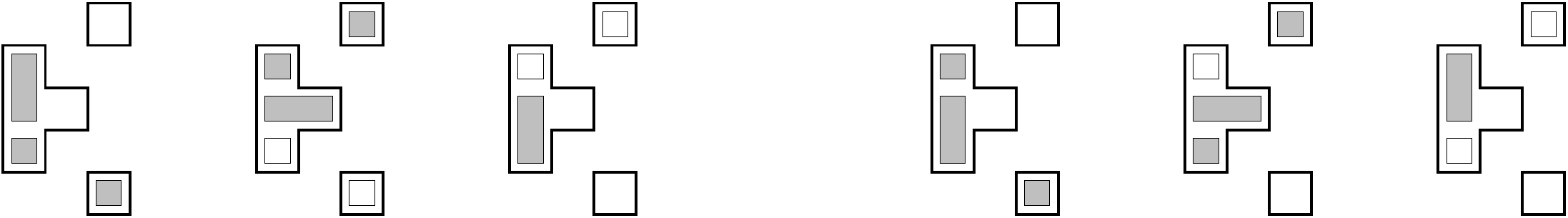}
\end{center}
\caption{Two tiling with twist differing by one.
The central square in the first column has at least three neighbors.
The two detached squares must exist (in some position in $\cD$)
since $\cD$ is balanced.
Dominoes which are not shown are the same for both tilings.}
\label{fig:nontrivialtwist}
\end{figure}

Recall from the Introduction that
a quadriculated disk $\cD$ is {\em regular} if the map
$\Tw: G^{+}_{\cD} \to \ZZ$ is an isomorphism:
it then follows that
$G_{\cD}$ is isomorphic to $\ZZ \oplus (\ZZ/(2))$.
The aim of Sections \ref{sect:thin} to \ref{sect:thick}
is to prove Theorem \ref{theo:rectangle}:
a rectangle $[0,L] \times [0,M]$ is regular
if and only if $\min\{L,M\} \ge 3$.



\section{Thin rectangles}
\label{sect:thin}

Consider $\cD = [0,2] \times [0,3]$
and the two tilings $\bt_0, \bt_1 \in \cT(\cR_4)$
shown in Figure \ref{fig:234}.
The tilings satisfy
$\Tw(\bt_0) = \Tw(\bt_1) = +1$ and $\bt_0 \not\sim \bt_1$.
The second claim is not obvious;
it follows from the main result in this section.

\begin{figure}[ht]
\begin{center}
\includegraphics[scale=0.275]{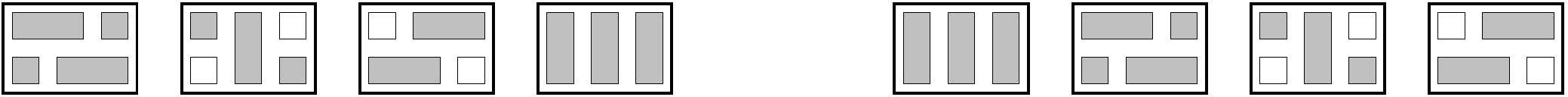}
\end{center}
\caption{Two tilings $\bt_0$ and $\bt_1$ of $\cR_4$
for $\cD = [0,2] \times [0,3]$.
We have $\Tw(\bt_0) = \Tw(\bt_1) = +1$ and $\bt_0 \not\sim \bt_1$.}
\label{fig:234}
\end{figure}

Let $F_2$ be the free group in $2$ generators $a$ and $b$.
Let $\psi: \ZZ/(2) \to \Aut(F_2)$ be defined by
$\psi(1)(a) = b^{-1}$, $\psi(1)(b) = a^{-1}$;
Let $G_2 = F_2 \ltimes_{\psi} \ZZ/(2)$ be the corresponding semidirect product;
in other words, a presentation of $G_2$ is:
\begin{equation}
\label{eq:presentation}
G_2 = \langle a, b, c | c^2 = e, cac = b^{-1}, cbc = a^{-1} \rangle.
\end{equation}

\begin{lemma}
\label{lemma:thin}
Let $\cD = [0,2] \times [0,M]$, $M \ge 3$.
Then there exists a surjective homomorphism $\phi: G_{\cD} \to G_2$.
\end{lemma}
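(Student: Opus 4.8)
The plan is to use the topological description $G_{\cD}=\pi_1(\cC_{\cD},\emptyplug)$ from \eqref{equation:dominogroup}. Giving a homomorphism out of this fundamental group is the same as giving a ``$G_2$-labelling'' of $\cC_{\cD}$: attach to each oriented edge (floor) $f$ of $\cC_{\cD}$ an element $\phi(f)\in G_2$ with $\phi(f^{-1})=\phi(f)^{-1}$, such that the product of the labels read around the boundary of every $2$-cell is trivial. For the $2$-cells constructed in Section~\ref{sect:groupcomplex} this means three families of identities: $\phi(f)^2=e$ for each loop $f=(\emptyplug,f^{\ast},\emptyplug)$; $\phi(f_a)=\phi(f_b)$ for a horizontal flip; and $\phi(f_1)\phi(f_2)=\phi(\tilde f_1)\phi(\tilde f_2)$ for a vertical flip. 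Any labelling meeting these determines a homomorphism $\phi\colon G_{\cD}\to G_2$ sending a loop to the product of its edge labels, and I would arrange it so that $\phi$ intertwines the parity homomorphisms $G_{\cD}\to\ZZ/(2)$ and $G_2\to\ZZ/(2)$; then $\phi(f)$ always lies in the nontrivial coset $F_2\,c$, and $\phi(G^{+}_{\cD})\subseteq F_2$.

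To construct the labelling I would exploit that $\cD=[0,2]\times[0,M]$ is thin in the $x$-direction. Cutting $\cD\times[0,N]$ along the plane $x=1$ identifies a tiling with the data of the set $S$ of unit squares of the $M\times N$ rectangle $\{1\}\times[0,M]\times[0,N]$ that are pierced by dominoes in the $e_1$-direction, together with one domino tiling of $([0,M]\times[0,N])\smallsetminus S$ coming from each of the two slabs $[0,1]\times[0,M]$ and $[1,2]\times[0,M]$. Restricting to a single floor, a floor $f=(p_0,f^{\ast},p_1)$ of $\cD$ is recorded by the positions of its $e_1$-dominoes together with a partial matching, compatible with $p_0$ and $p_1$, in each of the two height-$M$ columns. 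From this data I read $\phi(f)=w(f)\,c$ with $w(f)\in F_2=\langle a,b\rangle$: the letter $c$ registers the one-unit increase of height, while $w(f)$ records, slab by slab, how the membrane formed by the $e_1$-dominoes winds, an $a$-letter coming from a configuration that traps a hole against the left wall and a $b$-letter from one trapping a hole against the right wall. Because changing the height parity by one reflects the two slabs into each other and, by the bipartite colouring, also inverts, this is precisely the automorphism $\psi$ with $\psi(1)(a)=b^{-1}$, $\psi(1)(b)=a^{-1}$; that is why the target must be the semidirect product $G_2$ and not a direct product.

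Checking the relations is the crux. The loop relation holds because a single floor has no vertical extent to wind around anything, so the recipe gives $\phi(f)=c$ and $c^2=e$ in $G_2$. A horizontal flip takes place inside one slab, or trades an adjacent pair of $e_1$-dominoes for $e_2$-dominoes in both slabs, and in either case leaves the winding data untouched, giving $\phi(f_a)=\phi(f_b)$. For the vertical-flip quadrilateral, rewriting $\phi(f_1)\phi(f_2)=w(f_1)\,c\,w(f_2)\,c=w(f_1)\cdot\psi(1)\big(w(f_2)\big)$ converts the required equality into an identity in $F_2$ expressing that sliding a piece of the membrane between two consecutive floors alters neither slab's winding; engineering the combinatorial recipe for $w(f)$ so that this bookkeeping closes up consistently across all floor shapes, with the $\psi$-twist accounted for, is where I expect the real work to lie. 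Surjectivity is then easy: $G_2=\langle a,c\mid c^2=e\rangle$ (put $b=ca^{-1}c$), so it suffices to hit $a$ and $c$; any planar tiling of $\cD$, viewed as a tiling of $\cR_1$, maps to $c$, and an explicit tiling whose $e_1$-dominoes wind once around an interior square of the cut plane — for $M=3$, one built from the tilings of Figure~\ref{fig:234} — provides a preimage of $a$. The remaining ingredients (path-connectedness of $\cC_{\cD}$ from Lemma~\ref{coro:A1Npos}, and the formal passage from a compatible edge-labelling to a homomorphism on $\pi_1$) are routine.
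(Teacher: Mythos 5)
Your framework coincides with the paper's: realize $G_{\cD}=\pi_1(\cC_{\cD},\emptyplug)$, assign to each oriented edge (floor) an element of $G_2$ in the coset $F_2\,c$, check that every loop bigon, horizontal-flip bigon and vertical-flip quadrilateral has boundary mapping to $e$, and get surjectivity by exhibiting preimages of $a$ and $c$. The problem is that everything that makes this work is left undone. The word $w(f)\in F_2$ is never defined: ``an $a$-letter from a configuration that traps a hole against the left wall, a $b$-letter from one trapping a hole against the right wall'' is a heuristic, not a recipe, and you yourself defer the decisive step (``engineering the combinatorial recipe for $w(f)$ so that this bookkeeping closes up consistently \dots is where I expect the real work to lie''). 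Without a concrete definition one cannot verify the loop relation $\psi(1)(w(f))=w(f)^{-1}$, nor that a horizontal flip really leaves the label unchanged (a flip inside a floor can create or destroy exactly the kind of $e_1$-domino configuration your heuristic would count), nor the quadrilateral identity $w(f_1)\,\psi(1)(w(f_2))=w(\tilde f_1)\,\psi(1)(w(\tilde f_2))$, which is the whole difficulty; and the surjectivity claim also rests on knowing the values of the (undefined) labels. So as written there is no homomorphism on the table, only the statement that one should exist.

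It is worth seeing how the paper closes exactly this gap, because the solution is not a winding count at all: the labelling is made as trivial as possible. Working in the double cover $\cC^{+}_{\cD}$, the paper sets $\phi(\bff)=e$ for every floor-with-parity except a short, completely explicit list, singled out by a joint condition on the horizontal domino occupying the central column \emph{and} on the plug $p_0$ marking exactly $\frac{M-1}{2}$ (for $M$ odd, resp. $\frac{M}{2}-1$ for $M$ even) squares of a single color on one side; those exceptional floors are so rigid that for odd $M$ they lie on the boundary of no $2$-cell whatsoever, making the relation check vacuous, while for even $M$ a short parity-dependent table and a finite case check suffice. Any completed version of your argument would have to supply a definition of $w(f)$ with comparable rigidity (in particular using the plug data, not just the membrane of $e_1$-dominoes) and then actually carry out the $2$-cell verification; that construction and verification are the content of the lemma, so the proposal as it stands has a genuine gap rather than an alternative proof.
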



As we shall see, $\cD = [0,2]\times [0,3]$
and $\bt_0, \bt_1 \in \cT(\cR_4)$ as in Figure \ref{fig:234},
we have $\phi(\bt_0) = a$ and $\phi(\bt_1) = b^{-1}$,
which implies $\bt_0 \not\sim \bt_1$.

The idea of the proof is to construct an explicit map
taking oriented edges of $\cC_{\cD}$ to $G_2$.
Most edges are taken to the identity;
the exceptions are explicitely listed in
Figures \ref{fig:oddfree} and \ref{fig:evenfree}
and in Table \ref{tab:phree}.
These edges belong to the boundary of a small number of $2$-cells
and it is therefore easy to check that the boundary of such cells
is taken to the identity.

\begin{proof}
We first construct the restriction $\phi: G^{+}_{\cD} \to F_2$
by working in $\cC_{\cD}^{+}$,
the double cover of $\cC_{\cD}$
constructed at the end of Section \ref{sect:groupcomplex}.
We provide an explicit map taking each floor with parity
$\bff = (f,k) = (p_0,f^{\ast},p_1,k)$
to $\phi(\bff) \in \{e,a,a^{-1},b,b^{-1}\} \subset F_2$
(recall that floors with parity are edges of $\cC^{+}_{\cD}$).
Most floors are taken to the identity element $e$.
It is helpful to consider first the case $M$ odd, then the case $M$ even.
For odd $M$,  $\phi(\bff) = e$ {\em unless}:
the central column $[0,2] \times [\frac{M-1}{2},\frac{M+1}{2}]$
is occupied by a domino of $f^{\ast}$ {\em and}
the plug $p_0$ marks exactly $\frac{M-1}{2}$ squares in the region
$[0,2] \times [0,\frac{M-1}{2}]$, all of the same color.
It then follows that all squares in
$\cD \smallsetminus [0,2] \times [\frac{M-1}{2},\frac{M+1}{2}]$
are marked by either $p_0$ and $p_1$,
and the marking follows a checkerboard patterns.
Given odd $M$, there exist only two floors $f_0$ and $f_1 = f_0^{-1}$
(and four signed floors)
satisfying the conditions above,
shown in Figure \ref{fig:oddfree} for $M = 7$.
Set $\bff_0 = (f_0,0)$ and $\bff_1 = (f_1,0)$
so that $\bff_0^{-1} = (f_1,1)$ and $\bff_1^{-1} = (f_0,1)$.

\begin{figure}[ht]
\begin{center}
\includegraphics[scale=0.275]{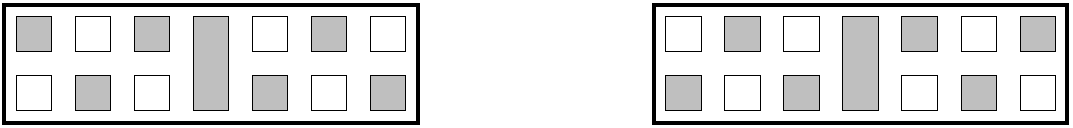}
\end{center}
\caption{The floors $f_0$ and $f_1$
for $\cD = [0,2] \times [0,7]$ for which $\phi$ is non trivial.}
\label{fig:oddfree}
\end{figure}

Finally, set $\phi(\bff_0) = a$, $\phi(\bff_0^{-1}) = a^{-1}$,
$\phi(\bff_1) = b$ and $\phi(\bff_1^{-1}) = b^{-1}$.
In order to verify that $\phi$ indeed defined a group homomorphism
$\phi: G_{\cD} = \pi_1(\cC_{\cD}) \to F_2$
we must verify that for every $2$-cell
the oriented boundary is taken to $e$.
Since neither $f_0$ nor $f_1$ are part of the boundary of any $2$-cell,
the result follows.

For even $M$,  $\phi(\bff) = e$ {\em unless}
exactly one of the following conditions hold:
\begin{enumerate}
\item{the column $[0,2] \times [\frac{M}{2}-1,\frac{M}{2}]$
is occupied by a domino of $f^{\ast}$ {\em and}
the plug $p_0$ marks exactly $\frac{M}{2}-1$ squares in the region
$[0,2] \times [0,\frac{M}{2}-1]$, all of the same color.}
\item{the column $[0,2] \times [\frac{M}{2},\frac{M}{2}+1]$
is occupied by a domino of $f^{\ast}$ {\em and}
the plug $p_0$ marks exactly $\frac{M}{2}-1$ squares in the region
$[0,2] \times [\frac{M}{2}+1,M]$, all of the same color.}
\end{enumerate}
There are then four classes of floors for which $\phi$ is non trivial,
shown in Figure \ref{fig:evenfree};
call them class $0$, $1$, $2$ and $3$ (in the order shown).

\begin{figure}[ht]
\begin{center}
\includegraphics[scale=0.275]{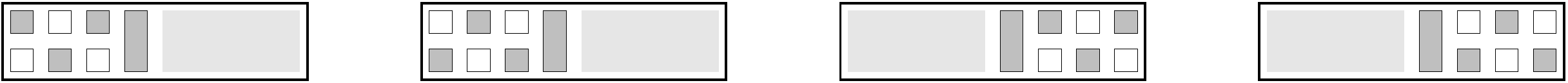}
\end{center}
\caption{Four classes of floors
for $\cD = [0,2] \times [0,8]$.}
\label{fig:evenfree}
\end{figure}

If $\bff = (f,k)$ is of class $j$, the value of $\phi(\bff)$
is shown in Table \ref{tab:phree}.

\begin{table}[ht]
\begin{center}
\begin{tabular}{c | c c c c c c c c}
$(j,k)$ & $(0,0)$ & $(1,0)$ & $(2,0)$ & $(3,0)$ &
$(0,1)$ & $(1,1)$ & $(2,1)$ & $(3,1)$ \\
[0.5ex] \hline
$\vphantom{\frac{\tilde f}{2}}\phi(\bff)$ & $a$ & $b$ & $a^{-1}$ & $b^{-1}$ &
$b^{-1}$ & $a^{-1}$ & $b$ & $a$ \\ 
    \end{tabular}
\caption{Values of $\phi(\bff)$.}
\label{tab:phree}
\end{center}
\end{table}

Notice that the floors shown in Figure \ref{fig:badfloors}
satisfy both conditions above:
by definition, $\phi(\bff) = e$
if $\bff = (f,k)$ for either example
and for either value of $k$.

\begin{figure}[ht]
\begin{center}
\includegraphics[scale=0.275]{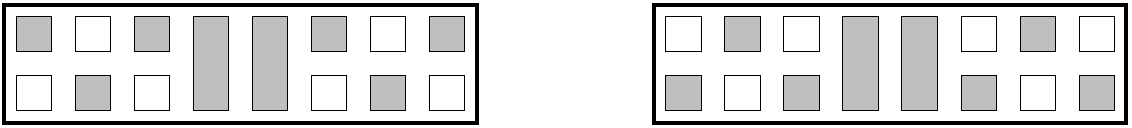}
\end{center}
\caption{Floors for $\cD = [0,2] \times [0,8]$.}
\label{fig:badfloors}
\end{figure}

A case by case check shows that $\phi$ extends to a homomorphism
since it takes the boundary of any $2$-cell to $e$.
This completes the construction of the homomorphisms
$\phi: G_{\cD} \to G_2$ for $\cD = [0,2] \times [0,M]$, $M \ge 3$.
In all cases, it is not hard to create tilings $\bt \in \cR_N$
(with $N$ even) such that $\phi(\bt) = a$ or $\phi(\bt) = b$;
for the first two tilings $\bt_0, \bt_1 \in \cT(\cR_6)$
in Figure \ref{fig:L2} we have
$\phi(\bt_0) = a^{-1}$ and
$\phi(\bt_1) = b$ (with $M = 5$).
This, by the way, proves what we claimed back then in the Introduction;
notice that $\phi(\bt_2) = e$.

Finally, we extend the map to $G_{\cD}$ by taking a tiling
$\bt \in \cT_{\cR_1}$ to the generator $c$ of $H \approx \ZZ/(2)$;
it is easy to check that the relations in Equation \ref{eq:presentation}
indeed hold.
\end{proof}

\begin{figure}[h!]
\begin{center}
\includegraphics[scale=0.275]{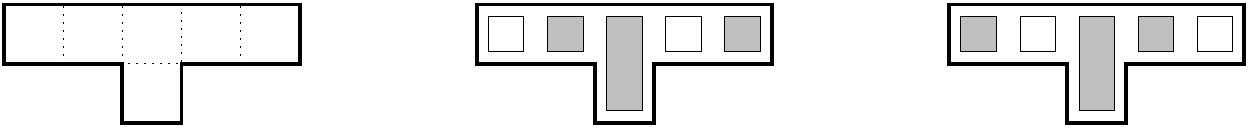}
\end{center}
\caption{Another example of a quadriculated disk which is not regular.}
\label{fig:nonregulart}
\end{figure}

For $M = 3$ and $M = 4$ the map $\phi$ constructed above is an isomorphism.
For $M \ge 5$ this is not the case;
indeed, $\Tw|_{\ker(\phi) \cap G^{+}_{\cD}}$ is surjective.
We do not provide a proof of this claim here,
but it follows easily from  \cite{primeiroartigo}.
The construction above is inspired by the results of \cite{primeiroartigo};
we focus here on the coefficients of highest degree
of the Laurent polynomial $P_{\bt} \in \ZZ[q,q^{-1}]$.

\goodbreak

\begin{remark}
\label{remark:nonregulart}
A construction similar to the proof of Lemma \ref{lemma:thin}
proves that a few other quadriculated disks are likewise not regular.
Consider, for instance, the disk $\cD$ in Figure \ref{fig:nonregulart}.
The floors $f_0$ and $f_1$ shown in the same figure
have the same property as the floors of the same name in the proof
for rectangles $[0,2]\times[0,M]$, $M$ odd.
Indeed, it is easy to check that they belong to the boundary
of no $2$-cell.
We therefore here also have a surjective homomorphism
$\phi: G_{\cD} \to G_2 = F_2 \ltimes \ZZ/(2)$.
See \cite{marreiros} for far more on irregular disks.
\end{remark}

\section{Generators}
\label{sect:gen}

Our construction of $\cC_{\cD}$ yields
a finite but complicated presentation of $G_{\cD} = \pi_1(\cC_{\cD})$.
We present a more convenient family of generators of $G_{\cD}$,
which will be particularly useful in order to prove
that certain disks $\cD$ are regular.

A quadriculated disk $\cD$ is {\em hamiltonian}
if, seen as a graph, it is hamiltonian.
Notice that if a balanced quadriculated disk is hamiltonian
then it is tileable:
just place dominoes along a hamiltonian path.
A rectangle $\cD = [0,L] \times [0,M]$, $LM$ even,
is an example of a hamiltonian disk
(Figures \ref{fig:R2} and \ref{fig:rectangularpaths}
show examples of hamiltonian paths).

\begin{figure}[ht]
\begin{center}
\includegraphics[scale=0.275]{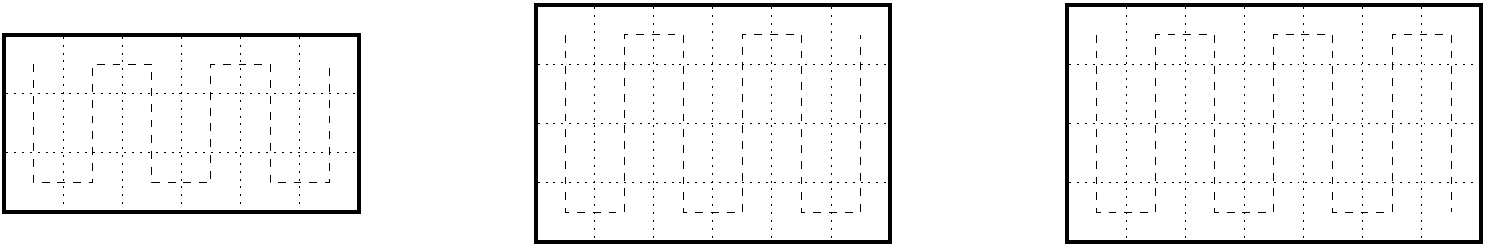}
\end{center}
\caption{Hamiltonian paths for rectangles $[0,L] \times [0,M]$.}
\label{fig:rectangularpaths}
\end{figure}

Given a hamiltonian quadriculated disk,
fix an arbitrary hamiltonian path
$\gamma_0 = (s_1, \ldots, s_{|\cD|})$
where the $s_i$ are distinct unit squares contained in $\cD$
and $s_i$ and $s_{i+1}$ are adjacent (for all $i$).
We say that a planar domino $d \subset \cD$
is contained in the path $\gamma_0$
if $d = s_i \cup s_{i+1}$ (for some $i$).
Similarly, we say that a horizontal domino $d \subset \cR_N$
{\em respects} $\gamma_0$ if its projection 
$\tilde d \subset \cD$ is a planar domino contained in $\gamma_0$;
by definition, vertical dominoes always {\em respect} $\gamma_0$.

For each plug $p$, construct a tiling
$\bt_p \in \cR_{N;p,\emptyplug}$, $N$ even, 
as in the proof of Lemma \ref{lemma:flipcork},
using $\gamma_0$ as spanning tree.
Notice that all dominoes in $\bt_p$ respect $\gamma_0$.
Again, Figure \ref{fig:R2} provides an example.
The following lemma shows that,
given the hamiltonian path $\gamma_0$ and a plug $p \in \cP$,
the tiling $\bt_p$ above is well defined up to flips.

\begin{lemma}
\label{lemma:welldefined}
Consider an hamiltonian quadriculated disk $\cD$ with a fixed path $\gamma_0$.
Consider a plug $p \in \cP$
and two tilings
$\bt_0 \in \cR_{N_0;p,\emptyplug}$ and
$\bt_1 \in \cR_{N_1;p,\emptyplug}$,
where $N_0$ and $N_1$ are both even.
If both $\bt_0$ and $\bt_1$ respect $\gamma_0$ then
$\bt_1^{-1} \ast \bt_0 \approx \bt_{\nvert} \in \cR_{N_0+N_1}$
and
$\bt_0 \ast \bt_1^{-1} \approx \bt_{\nvert} \in \cR_{N_0+N_1;p,p}$.
In particular, the paths in $\cC_{\cD}$ defined by $\bt_0$ and $\bt_1$
are homotopic with fixed endpoints.
\end{lemma}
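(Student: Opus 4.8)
The plan is to reduce the statement to showing that any two tilings of a cork $\cR_{N;p,\emptyplug}$ which respect $\gamma_0$ are flip-connected after padding with vertical floors. First I would handle the case where $p = \emptyplug$: then $\bt_0, \bt_1$ are tilings of ordinary cylinders $\cR_{N_0}, \cR_{N_1}$ in which every domino respects $\gamma_0$. I claim each such tiling can be reduced, via flips and the padding operations allowed by $\sim$, to a \emph{standard} tiling built floor-by-floor along $\gamma_0$ exactly as in the proof of Lemma \ref{lemma:flipcork}. The key observation is that a tiling respecting $\gamma_0$ is essentially a tiling of the two-dimensional region $\gamma_0 \times [0,N]$ (a $|\cD|\times N$ rectangle), because the only horizontal dominoes allowed lie along consecutive squares of the path; Thurston's theorem (\cite{thurston1990}, \cite{saldanhatomei1995}), together with the fact that flips of $\cR_N$ restricted to this sub-rectangle are exactly planar flips, then tells us any two such tilings with the same height are flip-connected, and heights are reconciled by Lemma \ref{lemma:movevert}.

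For the general case $p \neq \emptyplug$, I would first use Lemma \ref{lemma:flipcork} (applied with spanning tree $\gamma_0$) to produce a canonical even tiling $\bt_p$ of $\cR_{-N',N';p,p}$ whose central plug is $\emptyplug$ and all of whose dominoes respect $\gamma_0$; restricting gives a canonical tiling of $\cR_{N';\emptyplug,p}$. Given $\bt_0 \in \cT(\cR_{N_0;p,\emptyplug})$ respecting $\gamma_0$, I concatenate with this canonical piece at the $p$-end to obtain a tiling of a cylinder $\cR_{N_0+N'}$ which still respects $\gamma_0$ and has both endpoints $\emptyplug$; the previous paragraph then applies. Doing the same to $\bt_1$ and comparing, one gets $\bt_0 \ast (\text{canonical}) \approx \bt_1 \ast (\text{canonical})$ after suitable vertical padding, and then the fact that the canonical piece is invertible up to flips (it is conjugate to $\bt_{\nvert}$ by Lemma \ref{lemma:flipcork} and Lemma \ref{lemma:eventiling}) lets me cancel it, yielding $\bt_1^{-1} \ast \bt_0 \approx \bt_{\nvert}$ and symmetrically $\bt_0 \ast \bt_1^{-1} \approx \bt_{\nvert}$. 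The final sentence about homotopy in $\cC_{\cD}$ is then immediate from Lemma \ref{lemma:homotopy}, since $\bt_0 \sim \bt_1$ exactly when the corresponding paths are homotopic rel endpoints.

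I expect the main obstacle to be making precise the reduction ``a tiling respecting $\gamma_0$ is a planar tiling of $\gamma_0 \times [0,N]$'' and checking that the flips and vertical-padding moves we are allowed to perform in $\cR_N$ suffice to realize \emph{every} planar flip of that sub-rectangle. Vertical dominoes of $\cR_N$ that respect $\gamma_0$ correspond to vertical dominoes in $\gamma_0 \times [0,N]$, but there is a subtlety: a planar flip in $\gamma_0 \times [0,N]$ that rotates two vertical dominoes into two horizontal ones along the path corresponds to a genuine flip in $\cR_N$ only if the two path-squares involved are also adjacent \emph{as squares of $\cD$} — which they are, by definition of the path — so this works, but the bookkeeping (especially ensuring intermediate tilings stay inside the ``respects $\gamma_0$'' class, or leave it only temporarily in a controlled way) is where care is needed. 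A secondary point is justifying that the vertical padding needed to equalize $N_0$ and $N_1$ stays within the regime where all the cork lemmas apply; this is routine given Lemma \ref{lemma:movevert} and the hypothesis that $N_0, N_1$ are both even, so the parity matches.
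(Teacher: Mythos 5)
Your central idea --- a tiling that respects $\gamma_0$ unfolds to a planar tiling of a quadriculated disk, planar flip-connectivity (\cite{thurston1990}, \cite{saldanhatomei1995}) applies, and planar flips fold back to genuine flips because consecutive squares of $\gamma_0$ are adjacent in $\cD$ --- is exactly the paper's mechanism, and your worry about intermediate tilings is resolved automatically by performing the flips in the unfolded region and pushing them forward. The gap is in how you treat the plug $p$ and the unequal heights. The lemma asserts flip-equivalence $\approx$ at the exact height $N_0+N_1$, with no extra vertical space; but your reduction appends canonical pieces $\bt_p$, pads with vertical floors, and then ``cancels'' the canonical piece. Cancellation of a concatenated factor is a group-theoretic move: it is valid modulo $\sim$ (i.e.\ up to homotopy in $\cC_{\cD}$, Lemma \ref{lemma:homotopy}) but not modulo $\approx$ at fixed height --- if it were, $\approx$ and $\sim$ would coincide, contradicting the example of Figure \ref{fig:442}. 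Likewise ``heights are reconciled by Lemma \ref{lemma:movevert}'' cannot mean what you need: tilings of cylinders of different heights are never $\approx$, and Lemma \ref{lemma:movevert} only moves vertical floors within a region of fixed height. So as written your argument proves $\bt_1^{-1}\ast\bt_0 \sim \bt_{\nvert}$ (hence the ``in particular'' clause) but not the stated $\approx$.

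The fix, which is what the paper does, avoids any reduction to $p=\emptyplug$ and any padding: observe that the concatenations $\bt_1^{-1}\ast\bt_0 \in \cT(\cR_{N_0+N_1})$ and $\bt_0\ast\bt_1^{-1} \in \cT(\cR_{N_0+N_1;p,p})$ are themselves single tilings all of whose dominoes respect $\gamma_0$ (the vertical dominoes inserted at the junction respect $\gamma_0$ by definition, and reflection in $z$ preserves the property). Unfold each of them, together with $\bt_{\nvert}$, to a tiling of the quadriculated disk obtained from $[0,|\cD|]\times[0,N_0+N_1]$ by deleting, in the cork case, the squares of the first and last rows corresponding to $p$; this region is contractible, so the two unfolded tilings are joined by planar flips, and folding carries that sequence to a flip sequence in the cylinder or cork of height exactly $N_0+N_1$. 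No appeal to Lemma \ref{lemma:flipcork}, Lemma \ref{lemma:eventiling}, or cancellation is needed.
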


\begin{proof}
Given a hamiltonian path $\gamma_0$ and
a cork $\cR_{N_i;p,\emptyplug}$,
we construct a planar region $\tilde\cD_p \subseteq [0,|\cD|] \times [0,N_i]$
and a folding map from $\tilde\cD_p$ to $\cR_{N_i;p,\emptyplug}$.
The folding map takes a unit square $[j-1,j]\times[k-1,k]$
to the unit cube $s_j \times [k-1,k]$.
Consistently, we obtain the quadriculated disk $\tilde\cD_p$
from $[0,|\cD|] \times [0,N_i]$
by removing the unit squares $[j-1,j]\times[0,1]$
for which $s_j \subset p$;
notice that $N_i \ge 2$ implies that $\tilde\cD_p$ is contractible.
A tiling $\bt_i$ of $\cR_{N_i;p,\emptyplug}$
respects $\gamma_0$ if and only if it can be unfolded,
i.e., if and only if it is the image under the folding map
of a tiling $\tilde\bt_i$ of $\tilde\cD_p$.
The result follows from the well known fact that
two domino tilings of a quadriculated disk
can be joined by a finite sequence of flips.
\end{proof}

Consider a hamiltonian disk $\cD$ with a fixed path $\gamma_0$.
Consider a floor $f_1 = (p_{0},f_1^{\ast},p_1)$;
let $f_{\nvert} = (p_1,\emptyset,p_1^{-1})$
be a matching vertical floor
(recall that $p_1^{-1}$ is the complement of $p_1$).
As above, construct tilings
\[ \bt_{p_0} \in \cR_{N_{p_0};{p_0},\emptyplug}, \quad
\bt_{p_1^{-1}} \in \cR_{N_{p_1^{-1}};p_1^{-1},\emptyplug}, \quad
\bt_{f_1} = \bt_{p_0}^{-1} \ast f_1 \ast f_{\nvert} \ast \bt_{p_1^{-1}}
\in \cR_{N} \]
where $N = N_{p_0}+N_{p_1^{-1}}+2$:
notice that the dominoes in the tiling $\bt_{f_1}$
which do not respect $\gamma_0$
are all contained in the original floor $f_1$.

\begin{lemma}
\label{lemma:floorsast}
Consider an hamiltonian quadriculated disk $\cD$ and a fixed path $\gamma_0$.
Consider $N$ even and a tiling $\bt \in \cR_N$
with floors $f_1, \ldots, f_N$;
we then have
\[ 
\bt \sim \bt_{f_1} \ast \bt_{f_2}^{-1} \ast \cdots 
\ast \bt_{f_i}^{(-1)^{(i+1)}} \ast \cdots \ast \bt_{f_N}^{-1}. 
\]
\end{lemma}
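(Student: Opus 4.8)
The plan is to recast the statement as an identity in $G_{\cD}=\pi_1(\cC_{\cD},\emptyplug)$ and then prove it by a telescoping argument along the floors of $\bt$. By Lemma~\ref{lemma:homotopy} the relation $\sim$ is precisely homotopy of paths with fixed endpoints; since $\bt$ and the proposed product $\bt_{f_1}\ast\bt_{f_2}^{-1}\ast\cdots\ast\bt_{f_N}^{-1}$ are both closed paths at $\emptyplug$, it suffices to show that they define the same element of $G_{\cD}$.

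First I would rewrite $\bt$ by inserting $\gamma_0$-respecting detours. Writing $\bt=f_1\ast f_2\ast\cdots\ast f_N$ and letting $\emptyplug=p_0,p_1,\dots,p_N=\emptyplug$ be the plugs visited, I replace the path, at each interior vertex $p_i$, by $\cdots\ast f_i\ast\bt_{p_i}\ast\bt_{p_i}^{-1}\ast f_{i+1}\ast\cdots$, where $\bt_{p_i}\in\cT(\cR_{N_{p_i};p_i,\emptyplug})$ is the fixed $\gamma_0$-respecting tiling used to construct the $\bt_f$'s. Each insertion appends the null-homotopic loop $\bt_{p_i}\ast\bt_{p_i}^{-1}$, hence is a homotopy with fixed endpoints. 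After performing all of them and regrouping, $\bt$ becomes homotopic to $g_1\ast g_2\ast\cdots\ast g_N$, where $g_i=\bt_{p_{i-1}}^{-1}\ast f_i\ast\bt_{p_i}$ is a closed path at $\emptyplug$ that respects $\gamma_0$ in every slab except the one occupied by $f_i$ (the first and last blocks agree with $g_1$, $g_N$ up to the trivial tiling $\bt_{\emptyplug}$).

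Next I would compare $g_i$ with $\bt_{f_i}=\bt_{p_{i-1}}^{-1}\ast f_i\ast f_{\nvert}\ast\bt_{p_i^c}$, where $f_{\nvert}$ is the matching vertical floor. Cancelling $\bt_{p_{i-1}}\ast\bt_{p_{i-1}}^{-1}$ and $f_i^{-1}\ast f_i$, one sees that $g_i^{-1}\ast\bt_{f_i}$ is homotopic with fixed endpoints to $\bt_{p_i}^{-1}\ast f_{\nvert}\ast\bt_{p_i^c}$, a $\gamma_0$-respecting loop at $\emptyplug$ of odd length. The folding/unfolding argument of Lemma~\ref{lemma:welldefined} is insensitive to the parity of the height, so (after matching lengths with vertical floors) it identifies all $\gamma_0$-respecting loops at $\emptyplug$ of a given parity; in particular this loop is homotopic to the length-one loop $\bt_c$ representing the order-two element $c\in G_{\cD}$. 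Hence $g_i\sim\bt_{f_i}\ast\bt_c$ for every $i$, and therefore $\bt\sim(\bt_{f_1}\ast\bt_c)\ast(\bt_{f_2}\ast\bt_c)\ast\cdots\ast(\bt_{f_N}\ast\bt_c)$.

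It then remains to transform this product into the alternating one $\bt_{f_1}\ast\bt_{f_2}^{-1}\ast\cdots\ast\bt_{f_N}^{-1}$. Using $\bt_c\ast\bt_c\approx\bt_{\nvert}$ and moving each copy of $\bt_c$ to the right past the following $\bt_{f_j}$, the odd-indexed factors survive unchanged while each even-indexed factor gets conjugated by $c$; the point to verify is that this conjugation replaces $\bt_{f_j}$ by $\bt_{f_j}^{-1}$ — equivalently, that the parity markers of the plugs $p_i$ along $\bt$ (which necessarily alternate, as in the double cover $\cC^{+}_{\cD}$) force exactly the exponent pattern $(-1)^{i+1}$. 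I expect this last bookkeeping to be the main obstacle: it is a statement about reflecting a single non-$\gamma_0$-respecting floor and re-absorbing the surrounding vertical dominoes, and the hard part will be to carry it out cleanly, presumably again by the folding/unfolding device of Lemma~\ref{lemma:welldefined} applied to the two copies of $f_j$ that the detours create.
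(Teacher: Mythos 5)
Your opening reduction is correct as far as it goes: inserting the null-homotopic detours $\bt_{p_i}\ast\bt_{p_i}^{-1}$ at every interior plug, cancelling, and identifying the odd-length $\gamma_0$-respecting loop $\bt_{p_i}^{-1}\ast f_{\nvert}\ast\bt_{p_i^c}$ with $\bt_c$ (via folding and planar flip-connectivity) does give $\bt\sim(\bt_{f_1}\ast\bt_c)\ast(\bt_{f_2}\ast\bt_c)\ast\cdots\ast(\bt_{f_N}\ast\bt_c)$. But the step you defer as ``bookkeeping'' --- that moving the copies of $\bt_c$ around conjugates each even-indexed factor and that this conjugation replaces $\bt_{f_j}$ by $\bt_{f_j}^{-1}$ --- is not a bookkeeping issue: it is false whenever $\Tw(\bt_{f_j})\neq 0$. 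Since $\Tw$ is a group homomorphism and $c$ has order two, $\Tw(c\ast x\ast c)=\Tw(x)$ for every $x\in G_{\cD}$, while $\Tw(x^{-1})=-\Tw(x)$; so $c\ast\bt_{f}\ast c\sim\bt_{f}^{-1}$ forces $\Tw(\bt_{f})=0$. There are floors for which this fails: the tiling of Figure \ref{fig:tdp}, which is exactly a $\bt_{f}$ for a floor whose only horizontal domino is $d$, is the generator $a$ of Lemma \ref{lemma:44} and has twist $1$. Hence no amount of shuffling the $\bt_c$'s can turn your product into the alternating one, and the plan cannot be completed in the form you describe.

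The sign alternation in the statement does not come from conjugation by $c$; it is built into the decomposition itself. In the paper's argument the detour inserted between $f_i$ and $f_{i+1}$ is \emph{not} the same for all $i$: for even $i$ one inserts $\bt_{p_i}\ast\bt_{p_i}^{-1}$, but for odd $i$ one first crosses a vertical floor to the complementary plug and inserts $\bt_{p_i^c}\ast\bt_{p_i^c}^{-1}$ (both modifications are flip-equivalent to a block of vertical floors, by Lemma \ref{lemma:welldefined}). Cutting the resulting tiling at its successive visits to $\emptyplug$, the segment containing an odd-indexed $f_i$ is $\bt_{f_i}$ itself (up to vertical padding), while the segment containing an even-indexed $f_i$ is the $z$-reflection of the analogous tiling built on the reversed floor --- i.e.\ the even-indexed floors are traversed with the opposite orientation, which is exactly what produces the exponent $(-1)^{i+1}$. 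If you want to salvage your uniform-detour computation, you would have to replace the false conjugation identity by a comparison of $\bt_{f}\ast\bt_c$ with the factor built from the reversed floor, which amounts to redoing the alternating choice of detours; the clean route is to alternate $p_i$ and $p_i^c$ from the start.
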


\begin{proof}
For each $i$, write $f_i = (p_{i-1},f_i^{\ast},p_i)$.
First, between each pair of floors $f_i$ and $f_{i+1}$
insert a large number of vertical floors.
Second, as in the proof of Lemma \ref{lemma:flipcork},
for even $i$,
modify the region between the original floors $f_i$ and $f_{i+1}$
from $\bt_{\nvert} \in \cT(\cR_{N_i;p_i,p_i})$
to $\bt_{p_i} \ast \bt_{p_i}^{-1}$.
Thus, after $f_i$ we now have an even number of floors
(all of whose dominoes respect $\gamma_0$),
followed by $\emptyplug$,
followed by the same even number of floors,
followed by $f_{i+1}$.
Similarly, for odd $i$,
repeat the construction between the original floors $f_i$ and $f_{i+1}$
but using $p_i^{-1}$ (instead of $p_i$);
we thus obtain the desired tiling and complete the proof.
\end{proof}

We consider a special case of the above construction.
Consider a domino $d \subset \cD$
which is not contained in the path $\gamma_0$;
thus, $d = s_i \cup s_j$ where $i +1 < j$.
Consider a plug $p \in \cP$ disjoint from $d$,
so that $\tilde p = p \cup d$ is a plug distinct from $p$.
Set $p_0 = p$, $p_1 = \tilde p^{-1}$ 
so that $\cD_{p_0,p_1}$ consists of the domino $d$, only.
Let $f_1 = (p_0,d,p_1)$ and
define $\bt_{d;p} = \bt_{f_1}$ as above.
Thus, the only domino of $\bt_{d;p}$
which does not respect $\gamma_0$ is $d \times [0,1]$.

Figure \ref{fig:tdp} below illustrates this construction
for $\cD = [0,4] \times [0,4]$:
we show a path $\gamma_0$, a domino $d$ not contained in $\gamma_0$,
a plug $p \in \cP$ and a valid tiling
$\bt_{d;p} \in \cT(\cR_{-2,2})$.
Notice that $\plug_0(\bt_{d;p}) = p$.
The domino $d \times [0,1]$ is the only one not respecting $\gamma_0$;
it appears in $\floorop_1(\bt_{d;p})$, the third floor in the figure.

\begin{figure}[ht]
\begin{center}
\includegraphics[scale=0.275]{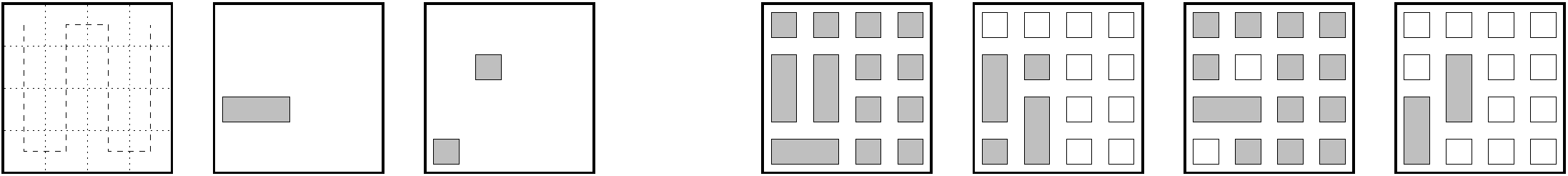}
\end{center}
\caption{For $\cD = [0,4] \times [0,4]$ we show a path $\gamma_0$,
a domino $d$, a plug $p$ and a valid tiling $\bt_{d;p}$.}
\label{fig:tdp}
\end{figure}

Notice that if the construction of $\bt_{d,p}$ above
is performed with a domino $d$
which is contained in the path $\gamma_0$
(and any compatible plug $p$)
we obtain a tiling $\bt_{d;p}$ such that every domino respects $\gamma_0$:
the tiling is therefore a tiling of $\gamma_0 \times [0,N]$
(for some positive even $N$)
and therefore $\bt_{d;p} \approx \bt_{\nvert}$.

\begin{lemma}
\label{lemma:decfloor}
Consider a hamiltonian disk $\cD$ with a fixed path $\gamma_0$.
Consider a floor $f = (p,f^{\ast},\tilde p)$.
Let $f^{\ast} = \{d_0, \ldots, d_{k-1}\}$ so that
$\cD_{p,\tilde p} = d_0 \cup \cdots \cup d_{k-1}$ and
$k = \frac12 |\cD_{p,\tilde p}|$.
Let $p_0 = p$ and $p_i = p_{i-1} \cup d_{i-1}$ so that $p_k = \tilde p^{-1}$.
Then 
\[ \bt_{f} \sim \bt_{d_0;p_0} \ast \cdots \ast
\bt_{d_i;p_i} \ast \cdots \ast \bt_{d_{k-1};p_{k-1}}. \]
\end{lemma}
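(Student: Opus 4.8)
The plan is to translate the statement into a homotopy assertion in $\cC_{\cD}$ via Lemma \ref{lemma:homotopy}, and then to realize the homotopy through a sequence of vertical flips of the kind used in Lemma \ref{lemma:movevert}. Write $f = (p_0,f^{\ast},\tilde p)$ with $p_0 = p$ and $f^{\ast} = \{d_0,\dots,d_{k-1}\}$, so that $p_i = p_{i-1}\cup d_{i-1}$, $p_k = \tilde p^{-1}$, and every $d_i$ is disjoint from $\tilde p$, hence $d_i\subseteq\tilde p^{-1}$. For $0\le j\le k$ set $f^{(j)} = (p_j,\{d_j,\dots,d_{k-1}\},\tilde p)$: this is a valid floor since $\cD_{p_j,\tilde p} = d_j\cup\cdots\cup d_{k-1}$, we have $f^{(0)} = f$, and $f^{(k)} = (p_k,\emptyset,p_k^{-1})$ is vertical, so $f^{(k)}$ and its matching vertical floor form a backtrack in $\cC_{\cD}$ and $\bt_{f^{(k)}}\sim e$. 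I would then prove the single ``peel off one domino'' step
\[ \bt_{f^{(j)}} \;\sim\; \bt_{d_j;p_j}\ast\bt_{f^{(j+1)}}\qquad(0\le j\le k-1), \]
from which the lemma follows by iteration: $\bt_f = \bt_{f^{(0)}} \sim \bt_{d_0;p_0}\ast\cdots\ast\bt_{d_{k-1};p_{k-1}}\ast\bt_{f^{(k)}} \sim \bt_{d_0;p_0}\ast\cdots\ast\bt_{d_{k-1};p_{k-1}}$ (when $k = 0$ this is the empty product, $\bt_f = \bt_{f^{(k)}}\sim e$). Throughout, by Lemma \ref{lemma:welldefined} the tilings $\bt_p$, and hence $\bt_f$ and $\bt_{d;p}$, are well defined up to flips, so the choices of caps $\bt_p$ are immaterial.

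To prove the peeling step I would relabel so that $j = 0$. Writing out the definitions gives
\[ \bt_f = \bt_{p_0}^{-1}\ast f\ast f_{\nvert}\ast\bt_{\tilde p^{-1}},\quad
\bt_{d_0;p_0} = \bt_{p_0}^{-1}\ast g_0\ast g_{0,\nvert}\ast\bt_{p_1},\quad
\bt_{f^{(1)}} = \bt_{p_1}^{-1}\ast f^{(1)}\ast f_{\nvert}\ast\bt_{\tilde p^{-1}}, \]
where $f_{\nvert} = (\tilde p,\emptyset,\tilde p^{-1})$, $g_0 = (p_0,\{d_0\},p_1^{-1})$ and $g_{0,\nvert} = (p_1^{-1},\emptyset,p_1)$. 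In $\bt_{d_0;p_0}\ast\bt_{f^{(1)}}$ the inner segment $\bt_{p_1}\ast\bt_{p_1}^{-1}$ is a backtrack and can be removed, and the outermost caps match those of $\bt_f$; so the step reduces to showing that the two paths from $p_0$ to $p_k = \tilde p^{-1}$, namely $f\ast f_{\nvert}$ and $g_0\ast g_{0,\nvert}\ast f^{(1)}\ast f_{\nvert}$, are homotopic with fixed endpoints.

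I would prove this by realizing $f\ast f_{\nvert}$ as the evident tiling of the cork $\cR_{0,2;p_0,\tilde p^{-1}}$ and stacking two vertical floors on top (a homotopy), obtaining a tiling of $\cR_{0,4;p_0,\tilde p^{-1}}$. In that tiling, for each $i\ge1$ the two squares of $d_i$ lie in $\tilde p^{-1}$, so immediately above the floor $f$ they are covered by a pair of vertical dominoes spanning the next two floors; exactly as in the proof of Lemma \ref{lemma:movevert}, two flips (treating those vertical dominoes as background) push the horizontal domino $d_i$ up by two floors. Since $d_1,\dots,d_{k-1}$ and the vertical pairs above them occupy pairwise disjoint columns, these moves are independent, and carrying all of them out transforms the tiling into the one whose four floors are $g_0$, $g_{0,\nvert}$, $f^{(1)}$, $f_{\nvert}$, as required. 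The step I expect to require the most care — and the main obstacle — is the plug bookkeeping: verifying that after the moves every floor is exactly the one claimed (for instance $\tilde p\cup(d_1\cup\cdots\cup d_{k-1}) = p_1^{-1}$, so that the floor still holding $d_0$ becomes precisely $g_0$), and checking that the two-flip move of Lemma \ref{lemma:movevert} applies to a single chosen horizontal domino of a floor without disturbing its other horizontal dominoes. Both are routine.
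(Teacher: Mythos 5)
Your proposal is correct and follows essentially the same route as the paper: the key moves are the two-flip vertical shuffling of horizontal dominoes from Lemma \ref{lemma:movevert} and the insertion/cancellation of cap pairs $\bt_{p}^{-1}\ast\bt_{p}$ (as in Lemma \ref{lemma:floorsast}), interpreted through Lemma \ref{lemma:homotopy}. The only difference is organizational --- you peel off one domino at a time by induction and verify the plug bookkeeping explicitly, whereas the paper separates all the horizontal dominoes of $f$ at once and then splices in the cap pairs --- and your single-step computation checks out.
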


\begin{proof}
Recall that
$\bt_{f} = \bt_{p}^{-1} \ast f \ast f_{\nvert} \ast \bt_{\tilde p^{-1}}$.
As in the proof of Lemma \ref{lemma:floorsast},
insert a large number of vertical floors around $f$.
As in Lemma \ref{lemma:movevert}, the horizontal dominoes
can be moved up or down:
do so so that they appear in the desired order,
with significant vertical space between them.
As in the proof of Lemma \ref{lemma:floorsast},
change the region between $d_i$ and $d_{i+1}$
to introduce $\bt_{p_{i+1}}^{-1} \ast \bt_{p_{i+1}}$.
This obtains the desired tiling and proves our lemma.
\end{proof}

Consider a hamiltonian disk $\cD$ with a fixed path $\gamma_0$;
assume without loss of generality that
the color of the square $s_i$ is $(-1)^i$.
Consider a domino $d = s_{i_{d,-}} \cup s_{i_{d,+}}$
not contained in $\gamma_0$
so that we may assume $i_{d,-} + 1 < i_{d,+}$.
We say that $d$ decomposes $\gamma_0$ into the three following intervals:
$I_{d;-1} = \ZZ \cap [1,i_{d,-}-1]$,
$I_{d;0} = \ZZ \cap [i_{d,-}+1,i_{d,+}-1]$,
$I_{d;+1} = \ZZ \cap [i_{d,+}+1,|\cD|]$.
Notice that the sets $I_{d;\pm 1}$ may be empty;
the interval $I_{d;0}$ always has even and positive cardinality.
Recall that a plug $p$ is compatible with $d$
if there is no unit square contained in both $p$ and $d$.
Consider a plug $p \in \cP$ compatible with $d$ and $j \in \{-1,0,+1\}$;
define
\[ \flux_j(d;p) = \sum_{i \in I_{d;j}, s_i \subset p} (-1)^i. \]
A verbal description may be helpful;
in order to compute $\flux_j(d;p)$ go through the list of squares
in  both $p$ and $I_{d;j}$:
each such square contributes with $+1$ or $-1$ according to color.
Notice that we always have 
$\flux_{-1}(d;p) + \flux_0(d;p) + \flux_{+1}(d;p) = 0$.
Define $\flux(d;p) =
(\flux_{-1}(d;p),\flux_0(d;p),\flux_{+1}(d;p)) \in H$
for $H = \{(\phi_{-1},\phi_0,\phi_{+1}) \in \ZZ^3 |
\phi_{-1}+\phi_0+\phi_{+1}=0 \}$.

For $d$, $p$ and $\bt_{d;p}$ as in Figure \ref{fig:tdp},
we have $d = s_3 \cup s_6$,
$I_{d;-1} = \{1,2\}$,
$I_{d;0} = \{4,5\}$,
$I_{d;+1} = \{7,\ldots, 16\}$,
$\flux(d;p) = (0,-1,+1)$.

\begin{lemma}
\label{lemma:flux}
Consider a hamiltonian disk $\cD$ with a fixed path $\gamma_0$
and a domino $d \subset \cD$ not contained in $\gamma_0$.
Consider two plugs $p_0, p_1 \in \cP$,
both compatible with $d$.
If $\flux(d;p_0) = \flux(d;p_1)$ then
$\bt_{d;p_0} \sim \bt_{d;p_1}$.
\end{lemma}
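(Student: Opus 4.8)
The plan is to show that the class of $\bt_{d;p}$ in the domino group $G_{\cD}$ depends on the plug $p$ only through the triple $\flux(d;p)$. I would do this by introducing a list of ``elementary moves'' on plugs and checking three things about them: (a) they do not change $\flux(d;\cdot)$; (b) they connect any two plugs that are compatible with $d$ and have the same flux; (c) they do not change the class of $\bt_{d;\cdot}$ in $G_{\cD}$. The elementary move is: replace a plug $q$ by $q\cup\{s_i,s_{i+1}\}$ (or the reverse), where $s_i,s_{i+1}$ are consecutive along $\gamma_0$, both lie in the same interval $I_{d;j}$, and neither lies in $q$. Note this forces $s_i,s_{i+1}$ to avoid the two endpoint squares of $d$, so $e:=s_i\cup s_{i+1}$ is a planar domino contained in $\gamma_0$ and disjoint from $d$. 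Since such a move inserts a consecutive pair of opposite colours inside a single interval, it does not affect any of $\flux_{-1},\flux_0,\flux_{+1}$, which gives (a).

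For (c) I would argue as follows. Put $q_0=q$, $q_1=q\cup e$, $\tilde p=q_0\cup d\cup e$, and let $f=(q_0,\{d,e\},\tilde p^{-1})$, which is a valid floor because $\cD_{q_0,\tilde p^{-1}}=d\cup e$ and $\tilde p$ is balanced. Applying Lemma \ref{lemma:decfloor} to $f$, decomposing its reduced floor first in the order $(d,e)$ and then in the order $(e,d)$, yields
\[ \bt_{d;q_0}\ast\bt_{e;q_0\cup d}\;\sim\;\bt_f\;\sim\;\bt_{e;q_0}\ast\bt_{d;q_1}. \]
Since $e$ is contained in $\gamma_0$, both $\bt_{e;q_0}$ and $\bt_{e;q_0\cup d}$ are tilings of $\gamma_0\times[0,N]$ for suitable even $N$, hence $\approx\bt_{\nvert}$, i.e., trivial in $G_{\cD}$; cancelling them gives $\bt_{d;q_0}\sim\bt_{d;q_1}$, which is (c). For (b), since an elementary move changes $q$ only inside one interval $I_{d;j}$, it is enough to establish a purely combinatorial statement one interval at a time: any two subsets $S_0,S_1$ of a finite path $\{i_-,\dots,i_+\}$ with $\sum_{i\in S_0}(-1)^i=\sum_{i\in S_1}(-1)^i$ are joined by a sequence of moves that add or delete a pair of consecutive positions. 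I would prove this by first deleting consecutive pairs to reduce $S_0$ and $S_1$ to independent subsets (of the same flux), then observing that the moves allow one to slide an element two positions at a time (add the consecutive pair just to one side of it, delete the consecutive pair then containing it), and finally sliding both independent subsets to the common normal form given by the $|m|$ leftmost positions of the appropriate colour, where $m$ is the shared flux value.

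Combining (a)--(c) with $\flux(d;p_0)=\flux(d;p_1)$ gives $\bt_{d;p_0}\sim\bt_{d;p_1}$. The part I expect to be fiddly is the combinatorial claim in (b): although the sliding argument is elementary, one has to keep careful track of which positions are free when performing each add/delete, and treat the possibly empty or very short outer intervals $I_{d;\pm1}$ on the same footing as the always-nonempty middle interval $I_{d;0}$. The group-theoretic content (c) is, by contrast, immediate once Lemma \ref{lemma:decfloor} is in hand together with the remark that a domino of $\gamma_0$ contributes trivially in $G_{\cD}$.
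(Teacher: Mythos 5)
Your proposal is correct, but it follows a genuinely different route from the paper's. The paper proves the lemma by observing that all dominoes of $\bt_{d;p_i}$ other than $d\times[0,1]$ respect $\gamma_0$, so after padding both tilings into the same cork $\cR_{-N,N}$ they can be unfolded (as in Lemma \ref{lemma:welldefined}) into tilings of the planar region $(\gamma_0\times[-N,N])\smallsetminus((s_{i_{d,-}}\times[0,1])\cup(s_{i_{d,+}}\times[0,1]))$, a rectangle minus two unit squares; the hypothesis $\flux(d;p_0)=\flux(d;p_1)$ says exactly that the two planar tilings have the same flux in the sense of \cite{saldanhatomei1995}, so Theorem 4.1 of that paper gives a sequence of flips joining them, and these flips are also valid in $\cR_{-N,N}$ — yielding the stronger conclusion $\bt_{d;p_0}\approx\bt_{d;p_1}$ in a fixed cork. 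You instead work group-theoretically inside the formalism already developed here: your step (c) is a clean application of Lemma \ref{lemma:decfloor} (decomposing the floor $(q_0,\{d,e\},\tilde p^{-1})$ in the two orders) together with the remark that $\bt_{e;q}$ is trivial when $e$ lies along $\gamma_0$, and your steps (a)--(b) replace the citation by an elementary connectivity statement for subsets of an interval under adding or deleting adjacent pairs. That statement is true, and your sliding/normal-form argument can be made rigorous; a slightly cleaner packaging is to record, for each interval $I_{d;j}$, the sequence of partial signed sums of the plug along the interval — a pair move changes exactly one interior value of this sequence by $\pm1$, and any two such sequences with the same endpoints are connected by single-site moves, which is the standard height-function argument (and is, in effect, the special case of the Saldanha--Tomei flux theorem that the paper invokes). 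The trade-off: the paper's proof is shorter and gives $\approx$, at the price of importing Theorem 4.1 of \cite{saldanhatomei1995}; yours is self-contained and yields only $\sim$, which is all the lemma (and Corollary \ref{coro:generators}) requires.
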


\begin{proof}
This proof requires familiarity with
the main results of \cite{saldanhatomei1995},
particularly the concept of flux for quadriculated surfaces
and  Theorem 4.1.

Assume without loss of generality that 
$\bt_{d;p_0}, \bt_{d;p_1} \in \cT(\cR_{-N,N})$;
we prove that $\bt_{d;p_0} \approx \bt_{d;p_1}$.
Indeed, both can be interpreted as tilings of 
the quadriculated surface
\[ (\gamma_0 \times [-N,N]) \smallsetminus
((s_{i_{d,-}} \times [0,1]) \cup
(s_{i_{d,+}} \times [0,1])). \]
Here we use folding as in the proof of Lemma \ref{lemma:welldefined},
so that the quadriculated region is a rectangle minus two unit squares.
The hypothesis  $\flux(d;p_0) = \flux(d;p_1)$ 
shows that the two tilings have the same flux
in the sense of \cite{saldanhatomei1995}.
It follows from Theorem 4.1 of \cite{saldanhatomei1995} that,
interpreted as tilings of this surface,
we have $\bt_{d;p_0} \approx \bt_{d;p_1}$.
The resulting sequence of flips is also good in $\cR_{-N,N}$,
proving the claim and completing the proof.
\end{proof}

\begin{remark}
\label{rem:twflux}
Consider a hamiltonian disk $\cD$ with a fixed path $\gamma_0$
and a domino $d \subset \cD$ not contained in $\gamma_0$.
Then there exists $s \in \{+1,-1\}$ such that
for every plug $p$ compatible with $d$ we have
\[ \Tw(\bt_{d;p}) = s \sum_j (-1)^j \flux_j(d;p). \]
\end{remark}

Given a hamiltonian disk $\cD$, a fixed path $\gamma_0$
and a domino $d \subset \cD$ not contained in $\gamma_0$,
let 
\begin{gather*}
\Phi_d = \{ (\phi_{-1},\phi_0,\phi_{+1}) \in H \;|\;
\forall j, \phi_j \in [\phi_j^{\min},\phi_j^{\max}] \}, \\
\phi_j^{\min} = - |\{i \in I_{d;j} | (-1)^i = -1\}|, \qquad
\phi_j^{\max} = |\{i \in I_{d;j} | (-1)^i = -1\}|.
\end{gather*}
Clearly, for all $p \in \cP$,
if $p$ is compatible with $d$ then $\flux(d;p) \in \Phi_d$;
conversely, for all $\phi \in \Phi_d$ there exists $p \in \cP$
such that $p$ compatible with $d$ and $\flux(d;p) = \phi$.
A {\em complete family} of compatible plugs for $d$
is a family $(p_{d,\phi})_{\phi \in \Phi_d}$
with $\flux(d;p_{d,\phi}) = \phi$ (for all $\phi \in \Phi_d$).

\begin{coro}
\label{coro:generators}
Consider a hamiltonian disk $\cD$ with a fixed path $\gamma_0$.
For each domino $d \subset \cD$ not contained in $\gamma_0$,
consider a complete family of compatible plugs
$(p_{d,\phi})_{\phi \in \Phi_d}$.
Consider the family of tilings $(\bt_{d;p_{d,\phi}})$:
this family of tilings generates the domino group $G^{+}_{\cD}$.
\end{coro}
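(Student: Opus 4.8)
The plan is to combine the three structural lemmas of this section --- Lemma \ref{lemma:floorsast}, Lemma \ref{lemma:decfloor} and Lemma \ref{lemma:flux} --- to reduce an arbitrary element of $G^{+}_{\cD}$, represented by a tiling $\bt \in \cT(\cR_N)$ with $N$ even, to a product of the chosen generators $\bt_{d;p_{d,\phi}}$.

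First I would start from a tiling $\bt \in \cT(\cR_N)$, $N$ even, with floors $f_1, \ldots, f_N$. By Lemma \ref{lemma:floorsast} we have
\[
\bt \sim \bt_{f_1} \ast \bt_{f_2}^{-1} \ast \cdots \ast \bt_{f_i}^{(-1)^{(i+1)}} \ast \cdots \ast \bt_{f_N}^{-1},
\]
so in $G^{+}_{\cD}$ the class of $\bt$ is the corresponding product of the classes $\bt_{f_i}^{\pm 1}$. Hence it suffices to show each $\bt_{f}$, for $f = (p,f^{\ast},\tilde p)$ an arbitrary floor, lies in the subgroup generated by the $\bt_{d;p_{d,\phi}}$. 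Next, Lemma \ref{lemma:decfloor} writes $\bt_f \sim \bt_{d_0;p_0} \ast \cdots \ast \bt_{d_{k-1};p_{k-1}}$ where $d_0, \ldots, d_{k-1}$ are the dominoes of $f^{\ast}$ and the $p_i$ are the intermediate plugs; so it is enough to handle each factor $\bt_{d_i;p_i}$ with $d_i$ a single planar domino and $p_i$ a plug compatible with it. If $d_i$ is contained in the path $\gamma_0$, then (as observed just before Lemma \ref{lemma:decfloor}) $\bt_{d_i;p_i} \approx \bt_{\nvert}$, so that factor is trivial in $G^{+}_{\cD}$. If $d_i$ is not contained in $\gamma_0$, then $p_i$ is compatible with $d_i$, so $\flux(d_i;p_i) \in \Phi_{d_i}$; since $(p_{d_i,\phi})_{\phi \in \Phi_{d_i}}$ is a complete family, there is a chosen plug $p_{d_i,\phi}$ with $\flux(d_i;p_{d_i,\phi}) = \flux(d_i;p_i)$, and Lemma \ref{lemma:flux} gives $\bt_{d_i;p_i} \sim \bt_{d_i;p_{d_i,\phi}}$. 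Thus every factor either vanishes in $G^{+}_{\cD}$ or equals one of the chosen generators, and the result follows.

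The one point requiring a little care, and the place I expect the main (minor) obstacle, is the bookkeeping of parities and orientations when concatenating: the factors $\bt_{f_i}^{(-1)^{(i+1)}}$ carry alternating exponents, and each $\bt_{f_i}$ is itself a tiling on a cork of its own (even) height obtained from the recipe $\bt_{f_i} = \bt_{p_{i-1}}^{-1} \ast f_i \ast f_{\nvert} \ast \bt_{p_i^{-1}}$. One must check that all the intermediate tilings involved lie in $\cT(\cR_{2N'})$ for appropriate even $2N'$, so that everything stays inside $G^{+}_{\cD}$ rather than leaking into the odd coset, and that the relation ``$\sim$'' used in Lemmas \ref{lemma:floorsast}, \ref{lemma:decfloor}, \ref{lemma:flux} is exactly the equivalence defining $G_{\cD}$ (which it is, by Lemma \ref{lemma:homotopy}). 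Granting this, the argument above is essentially a direct chaining of the three lemmas, and no new idea is needed beyond invoking a complete family of compatible plugs to absorb the $\flux$-dependence.
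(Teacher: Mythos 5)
Your proposal is correct and is exactly the argument the paper intends: the paper's own proof is a one-line citation of Lemmas \ref{lemma:floorsast}, \ref{lemma:decfloor} and \ref{lemma:flux}, and you have simply spelled out the chaining (including the observation that dominoes contained in $\gamma_0$ give trivial factors and that Lemma \ref{lemma:flux} absorbs the plug-dependence via the complete family). No gap; the parity/orientation bookkeeping you flag is handled by the constructions in those lemmas as you note.
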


\begin{proof}
This follows directly from Lemmas \ref{lemma:floorsast},
\ref{lemma:decfloor} and \ref{lemma:flux}.
\end{proof}


\section{Small regular rectangles}
\label{sect:44}

In this section we apply the results of the previous section,
particularly Corollary \ref{coro:generators},
to compute $G_{\cD}$ for a few examples.

\begin{lemma}
\label{lemma:44}
The rectangle $\cD = [0,4] \times [0,4]$ is regular.
Thus, $\Tw: G_{\cD}^{+} \to \ZZ$ is an isomorphism.
The group $G_{\cD}$ is isomorphic to $\ZZ \oplus (\ZZ/(2))$,
with generators $a$, the tiling shown in Figure \ref{fig:tdp},
and $c$, given by any tiling of $\cR_1$; 
$a \in G_{\cD}^{+}$ has twist $1$, $c$ has order $2$ and we have
$a\ast c \approx c\ast a$.
\end{lemma}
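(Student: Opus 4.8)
The plan is to follow the general strategy sketched just before the lemma: fix a hamiltonian path on $\cD$, read off from Corollary~\ref{coro:generators} a finite generating set of $G^{+}_{\cD}$, and then check by hand --- this is the promised ``algorithm'' --- that every generator is, modulo $\sim$, a power of a single element $a$. Concretely, fix on $\cD = [0,4]\times[0,4]$ the snake hamiltonian path $\gamma_0$ of Figures~\ref{fig:rectangularpaths} and~\ref{fig:tdp}, numbered so that $s_i$ has color $(-1)^i$. The $4\times 4$ grid graph has $24$ edges and $\gamma_0$ uses $15$ of them, so there are exactly nine dominoes $d \subset \cD$ not contained in $\gamma_0$; for each such $d$ the set $\Phi_d \subset H$ is small and explicit. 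By Corollary~\ref{coro:generators} the family $\{\bt_{d;p_{d,\phi}}\}$, indexed by the pairs $(d,\phi)$ with $d \not\subset \gamma_0$ and $\phi \in \Phi_d$, generates $G^{+}_{\cD}$; by Lemma~\ref{lemma:flux} the class of $\bt_{d;p}$ depends only on $\flux(d;p)$, so the list is genuinely finite, and Remark~\ref{rem:twflux} records its twists.

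Let $a \in G^{+}_{\cD}$ be the class of the tiling $\bt_{d;p}$ of Figure~\ref{fig:tdp}; a direct check gives $\Tw(a) = 1$. The core of the proof is to verify, for each generator $g = \bt_{d';p'}$ on the list, that $g \sim a^{\Tw(g)}$. One does this constructively: after inserting sufficiently many vertical floors, so that one is really computing in $\pi_1(\cC_{\cD})$, one manipulates $g$ by flips --- using Lemma~\ref{lemma:floorsast}, Lemma~\ref{lemma:decfloor} and Lemma~\ref{lemma:flux} together with the vertical-sliding move of Lemma~\ref{lemma:movevert} --- until it is transformed into $a^{\Tw(g)}$. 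Since the list is finite and each such verification terminates, this shows $G^{+}_{\cD} = \langle a \rangle$; as $\Tw(a) = 1$, the homomorphism $\Tw : G^{+}_{\cD} \to \ZZ$ has inverse $k \mapsto a^{k}$ and is an isomorphism. Thus $\cD$ is regular and $G^{+}_{\cD} \cong \ZZ$.

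For the full group, let $\bt_c \in \cT(\cR_1)$ be any tiling and $c \in G_{\cD}$ its class; then $c^{2} = e$ since $\bt_c \ast \bt_c \approx \bt_{\nvert,2}$, so $G_{\cD} = G^{+}_{\cD} \ltimes \langle c \rangle$. Since $\Tw : G_{\cD} \to \ZZ$ is a homomorphism we have $\Tw(c\,g\,c^{-1}) = \Tw(g)$ for every $g \in G_{\cD}$, and, $\Tw$ being injective on $G^{+}_{\cD}$, this forces $c\,g\,c^{-1} = g$ for all $g \in G^{+}_{\cD}$. Hence the semidirect product is actually direct: $G_{\cD} \cong \ZZ \oplus (\ZZ/(2))$, generated by $a$ and $c$. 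In this abelian group $[a \ast c] = [c \ast a]$, so $a \ast c \sim c \ast a$; the stronger $a \ast c \approx c \ast a$ then follows from a short, direct flip sequence that slides the single horizontal layer $\bt_c$ past the floors of $a$.

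The main obstacle is the reduction $g \sim a^{\Tw(g)}$ for each generator: this is where the real content lies, and the work is in organizing the required flip sequences efficiently, not in any single clever idea. Two points deserve care: the reductions must be carried out for all generators (the nine dominoes and their flux classes), and one should keep in mind that the procedure is guaranteed to terminate only because $[0,4]\times[0,4]$ happens to be regular --- for a non-regular disk the same search would never halt, which is precisely why this is a verification rather than a decision procedure.
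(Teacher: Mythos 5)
Your proposal is correct and follows essentially the same route as the paper: fix the snake path $\gamma_0$, invoke Corollary \ref{coro:generators} to get the finite list of generators $\bt_{d;p}$ indexed by the nine dominoes off $\gamma_0$ and their flux classes, and verify by explicit flip sequences that each generator is $\sim a^{k}$ with $k$ its twist, handling $c$ as an order-two element. The only cosmetic difference is that you deduce $c$ commutes with $G^{+}_{\cD}$ abstractly from the injectivity of $\Tw$ on $G^{+}_{\cD}$, whereas the paper (like you, for the stronger statement $a\ast c \approx c\ast a$) simply cites an explicit flip sequence; both are fine.
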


\begin{proof}
The proof is now a long computation.
The verification that $a\ast c \approx c\ast a$
is given by an explicit sequence of flips.
We apply  Corollary \ref{coro:generators}
to obtain a manageable list of generators of $G_{\cD}^{+}$;
for each generator $\bt$ we compute $k = \Tw(\bt)$
and verify (by an explicit sequence of flips)
that $\bt \sim a^k$.
 
We use the same path $\gamma_0$ shown in Figure \ref{fig:tdp}.
We first list all dominoes $d$ not contained in $\gamma_0$:
there are $9$ such dominoes, three dominoes per column,
each contained in a single row.
For each such domino $d$,
we list all the (finitely many) possible values
of $\flux(d;\ast) \in H \subset \ZZ^3$.
For instance, for $d$ as in Figure \ref{fig:tdp}
we have, for any $p \in \cP$,
$|\flux_{-1}(d;p)| \le 1$ and $|\flux_0(d;p)| \le 1$,
thus giving us a list of $9$ values.
For each such value we obtain an explicit $p$,
compute $\bt_{d;p}$ and complete the verification as above.
Notice that the tiling $\bt_{d;p}$ in Figure \ref{fig:tdp},
used to define $a$, is an instance of this construction.

This verification is performed by a computer,
but there are some simplifications which
significantly reduce the amount of computations involved.
For instance, the fact that $\cD$ is symmetric
with respect to a vertical line reduces from $9$ to $6$
the number of dominoes $d$ to be checked.
Also, for each $d$ it suffices to consider $\phi \in \Phi_d$
for which $|\phi_0|$ is maximal.
Indeed, assume for concreteness that $d$ is in the first column.
If $|\phi_0|$ does not have
the largest possible value (for that $d$)
then we may take $p$
which marks neither of the unit squares one row below $d$.
A few flips then take $\bt_{d,p}$ to $\bt_{\tilde d,p}$
where $\tilde d$ is the domino in the same column as $d$, one row lower;
this domino $\tilde d$ can be assumed to have already been taken care of.
After these simplifications,
there are less than $20$ distinct cases to be verified,
so the computer work can be double checked by hand.
\end{proof}

\begin{lemma}
\label{lemma:34}
Let $\cD = [0,L] \times [0,M]$ where
$L, M \in [3,6] \cap \ZZ$ and $LM$ is even:
the quadriculated disk $\cD$ is regular.
\end{lemma}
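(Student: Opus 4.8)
Up to the symmetry $L\leftrightarrow M$, the pairs $(L,M)$ with $L,M\in[3,6]\cap\ZZ$, $L\le M$ and $LM$ even are exactly
$(3,4)$, $(3,6)$, $(4,4)$, $(4,5)$, $(4,6)$, $(5,6)$, $(6,6)$
(the pairs $(3,3)$, $(3,5)$, $(5,5)$ are ruled out since for them $LM$ is odd). The case $(4,4)$ is Lemma~\ref{lemma:44}, so it remains to handle the other six. Each of these rectangles is a nontrivial tileable quadriculated disk (it contains a $3\times3$ block, whose centre has four neighbours, and has at least six squares), so by Lemma~\ref{lemma:nontrivialtwist} the map $\Tw\colon G^{+}_{\cD}\to\ZZ$ is surjective; the point is to show it is injective.

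The plan is to run, for each of the six rectangles, the same algorithm used in the proof of Lemma~\ref{lemma:44}. First I would fix a hamiltonian path $\gamma_0$ as in Figure~\ref{fig:rectangularpaths}, and fix one generator $a=\bt_{d_{\ast};p_{\ast}}$, with $d_{\ast}$ a domino not contained in $\gamma_0$, chosen (as in Figure~\ref{fig:tdp} for $[0,4]^2$) so that $\Tw(a)=1$. By Corollary~\ref{coro:generators}, the finite family $\bigl(\bt_{d;p_{d,\phi}}\bigr)$, indexed by the dominoes $d\subset\cD$ not contained in $\gamma_0$ and by $\phi\in\Phi_d$, generates $G^{+}_{\cD}$. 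For each such generator I would read off $k=\Tw(\bt_{d;p_{d,\phi}})=s\sum_j(-1)^j\phi_j$ (with $s=\pm1$ as in Remark~\ref{rem:twflux}) and then exhibit, by computer, an explicit finite sequence of flips realizing $\bt_{d;p_{d,\phi}}\ast\bt_{\nvert,M_0}\approx a^{k}\ast\bt_{\nvert,M_1}$ for suitable even $M_0,M_1$, i.e.\ $\bt_{d;p_{d,\phi}}\sim a^{k}$. Once every generator is shown to lie in the cyclic subgroup $\langle a\rangle$, we get $G^{+}_{\cD}=\langle a\rangle$; since $\Tw(a)=1$ this makes $\Tw$ a bijection $a^n\mapsto n$ on $G^{+}_{\cD}$, hence an isomorphism, and $\cD$ is regular. (As remarked after Theorem~\ref{theo:rectangle} this then forces $G_{\cD}\cong\ZZ\oplus(\ZZ/(2))$: a homomorphism to an abelian group is constant on conjugacy classes, so $\Tw\circ\alpha=\Tw$ for the conjugation automorphism $\alpha$ of $G^{+}_{\cD}$ by $c$, and injectivity of $\Tw$ gives $\alpha=\mathrm{id}$.)

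The one real obstacle is the size of this computation for the larger rectangles, especially $(4,6)$, $(5,6)$ and $(6,6)$: both the number of dominoes $d\not\subset\gamma_0$ and the size of each flux box $\Phi_d$ grow, and each individual flip-connectivity check is itself a search. I would keep it feasible with the two devices already used for $[0,4]^2$. First, the dihedral symmetry of the rectangle---which can be arranged to preserve $\gamma_0$ or to reverse it---collapses many of the cases to each other. Second, the ``domino sliding'' reduction: it is enough to treat those $\phi\in\Phi_d$ for which $|\phi_0|$ is as large as possible, since for the other $\phi$ one may choose the compatible plug $p_{d,\phi}$ so that it marks neither of the two squares immediately adjacent to $d$ on the relevant side, after which a couple of flips carry $\bt_{d;p_{d,\phi}}$ to $\bt_{\tilde d;p_{d,\phi}}$ for a domino $\tilde d$ in the same row or column one step closer to $\gamma_0$, which may be assumed already dealt with. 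With these reductions the number of genuinely distinct cases per rectangle stays modest, so the verification---while tedious---is a finite computation; reusing inside a larger rectangle the flip sequences already found in a sub-rectangle would be a further practical shortcut, provided one checks that the dominoes and plugs involved restrict compatibly.
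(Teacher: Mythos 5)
Your proposal is correct and follows essentially the same route as the paper: fix a hamiltonian path, take the twist-one generator $a$ built from the $2\times3\times4$ block, invoke Corollary~\ref{coro:generators}, and reduce to a finite computer-checked list of flip-equivalences $\bt_{d;p}\sim a^k$, using the same symmetry and domino-sliding reductions as in Lemma~\ref{lemma:44}. The only (harmless) deviation is that you deduce the commutation of $c$ with $G^{+}_{\cD}$ abstractly from injectivity of $\Tw$, where the paper verifies $a\ast c\sim c\ast a$ by an explicit flip sequence.
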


%

\begin{proof}
Again, a finite and manageable computation.
Construct explicit candidates for generators $a$ and $c$.
The tiling $a \in \cT(\cR_4)$ can be formed by taking
a copy of one of the tilings in Figure \ref{fig:234}
in a $2\times 3\times 4$ box and vertical dominoes elsewhere
(this is how we obtained $a$ for $\cD = [0,4]\times [0,4]$
in Figure \ref{fig:tdp} and in Lemma \ref{lemma:44}).
Notice that $\Tw(a) = +1$.
It is nice but not strictly necessary to check that
the choice of $\bt_0$ or $\bt_1$
and different positions for the box
obtain the same element of $G^{+}_{\cD}$.
For $c$, we take a tiling of $\cR_1$;
we verify that $a\ast c \sim c\ast a$.

For each quadriculated disk, choose a path $\gamma_0$ and
list the dominoes $d$ not contained in $\gamma_0$.
For each $d$, list the finitely many possible values of $\flux(d;\ast)$.
For each value of the flux, choose a plug $p$ and construct $\bt_{d;p}$.
For each such $\bt_{d;p}$, compute $k = \Tw(\bt_{d;p})$
and verify (by an explicit sequence of flips)
that $\bt_{d;p} \sim a^k$.
By Corollary \ref{coro:generators}, we are then done
(as in Lemma \ref{lemma:44}, after a relatively short computer verification).
\end{proof}

The proofs of Lemmas \ref{lemma:44} and \ref{lemma:34}
thus describe an algorithm.
Given a hamiltonian quadriculated disk $\cD$
containing a $2\times 3$ rectangle,
we fix a path $\gamma_0$ and
we construct $a \in \cR_4$ with $\Tw(a) = +1$
and vertical dominoes outside a $2\times 3\times 4$ box,
as in Figures \ref{fig:234} and \ref{fig:tdp}.
We make a list of the dominoes $d_i \subset \cD$
not contained in $\gamma_0$ and for each domino
we make a list of compatible plugs $p_j$
covering all possible values of $\flux(d_i;p_j)$.
For each pair $(d_i;p_j)$ we construct a tiling $\bt_{d_i;p_j}$
and compute $k = \Tw(\bt_{d_i;p_j})$.
We then obtain a finite list of questions of the form:
here are two tilings $\bt_{d_i,p_j}$ and $a^k$;
is it the case that $\bt_{d_i,p_j} \sim a^k$?
If the answer is {\em yes} in every case then 
this can be verified in finite time and 
we obtain a proof that $\cD$ is regular
(similar to the proofs of Lemmas \ref{lemma:44} and \ref{lemma:34}).
If in some case the answer is {\em no} then $\cD$ is not regular;
in order to prove that $\bt_0 \not\sim \bt_1$
we require a new idea or construction,
as was the case for Lemma \ref{lemma:thin}.

A natural question at this point is:
exactly which quadriculated disks are regular?
As of this writing we do not have a complete answer.
Figure \ref{fig:regularornot} shows five small quadriculated disks.
The first three are regular,
as can be verified using the methods described above.
The same methods applied to the last two are inconclusive,
but suggest that they are most likely not regular.

\begin{figure}[ht]
\begin{center}
\includegraphics[scale=0.275]{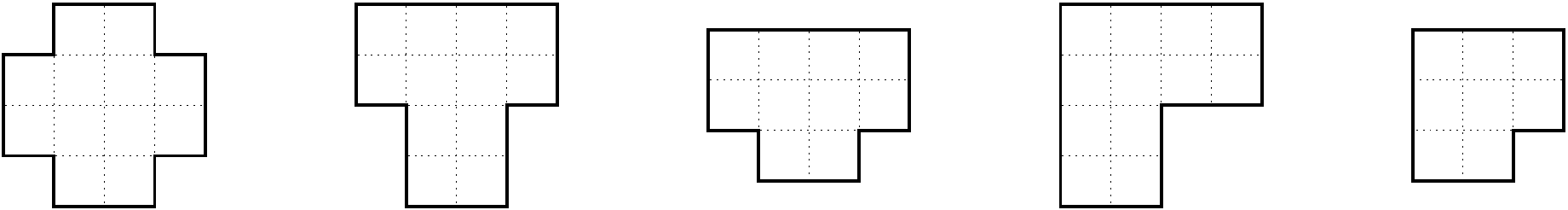}
\end{center}
\caption{Five quadriculated disks: which are regular?}
\label{fig:regularornot}
\end{figure}


\section{Larger regular rectangles}
\label{sect:thick}

In this section we move from specific quadriculated disks
to a large family of examples.

\begin{lemma}
\label{lemma:thick}
Let $\cD = [0,L] \times [0,M]$ where $L, M \ge 3$ and $LM$ is even:
the quadriculated disk $\cD$ is regular.
\end{lemma}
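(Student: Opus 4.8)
\emph{Proof plan.} I would argue by induction on $L+M$, keeping the standing hypotheses $L,M\ge 3$ and $LM$ even. The base case is $L,M\le 6$, which is Lemma~\ref{lemma:34}. For the inductive step I may assume, exchanging $L$ and $M$ if necessary, that $M\ge 7$. Put
\[
\cD_1=[0,L]\times[0,M-2],\qquad \cD_2=[0,L]\times[2,M],\qquad \cD_0=\cD_1\cap\cD_2=[0,L]\times[2,M-2].
\]
Each of $\cD_0,\cD_1,\cD_2$ has both sides $\ge 3$, satisfies the parity condition, and has the sum of its sides strictly less than $L+M$, hence is regular by the induction hypothesis; moreover $\cD_1\cup\cD_2=\cD$. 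Extending a tiling of $\cD_k\times[0,N]$ by vertical dominoes outside $\cD_k$ is compatible with flips and with adding vertical floors, hence induces homomorphisms $\iota_k\colon G^{+}_{\cD_k}\to G^{+}_{\cD}$ with $\Tw\circ\iota_k=\Tw$. Since each $G^{+}_{\cD_k}$ is infinite cyclic on a twist-$1$ element (regularity plus Lemma~\ref{lemma:nontrivialtwist}), and since the inclusions $\cD_0\subset\cD_1$, $\cD_0\subset\cD_2$ carry the twist-$1$ generator $a_0$ of $G^{+}_{\cD_0}$ to twist-$1$ elements, which generate $G^+_{\cD_1}$, resp.\ $G^+_{\cD_2}$, the maps $\iota_0,\iota_1,\iota_2$ all have the same image $\langle a\rangle$, where $a:=\iota_0(a_0)$ and $\langle a\rangle\cong\ZZ$.

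It now suffices to prove that $G^{+}_{\cD}=\langle a\rangle$: combined with Lemma~\ref{lemma:nontrivialtwist} this makes $\Tw\colon G^{+}_{\cD}\to\ZZ$ an isomorphism, i.e.\ $\cD$ regular. Equivalently I must show $G^{+}_{\cD}$ is generated by $\operatorname{im}\iota_1\cup\operatorname{im}\iota_2$; by Corollary~\ref{coro:generators} it is enough to check that each generating tiling $\bt_{d;p}$, attached to a fixed hamiltonian path $\gamma_0$ of $\cD$, lies in the subgroup generated by $\operatorname{im}\iota_1\cup\operatorname{im}\iota_2$. I would take $\gamma_0$ to be a boustrophedon sweeping the rows of $[0,L]\times[0,M]$ one at a time; then every domino $d\subset\cD$ not contained in $\gamma_0$ joins two vertically adjacent squares, and so is contained in $\cD_1$ (if its rows are $\le M-3$) or in $\cD_2$ (if its rows are $\ge 2$); since $M\ge 7$, one of the two always holds.

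Fix such a $\gamma_0$, a generator $\bt_{d;p}$, and say $d\subset\cD_1$ (the case $d\subset\cD_2$ is symmetric). Recall $\bt_{d;p}$ is the class of $\bt_{p}^{-1}\ast f_1\ast f_{\nvert}\ast\bt_{\tilde p^{-1}}$, where $f_1$ is the floor whose only horizontal domino is $d$ and $\tilde p=p\cup d$, and that by Lemma~\ref{lemma:flux} its class depends on $p$ only through $\flux(d;p)$. The intervals of $\gamma_0$ cut out by $d$ satisfy $I_{d;-1},I_{d;0}\subset\cD_1$, while $I_{d;+1}$ is a union of complete rows lying above $d$; using Lemma~\ref{lemma:flux} I would replace $p$ by a flux-equivalent plug whose intersection with each $I_{d;j}$ is an arc of $\gamma_0$ confined to $\cD_1$ whenever the flux budget allows. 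Then, inserting long stretches of vertical floors and using Lemmas~\ref{lemma:movevert}, \ref{lemma:welldefined}, \ref{lemma:floorsast}, \ref{lemma:decfloor} to separate in the $z$-direction the ``dances'' that build the canonical pieces $\bt_{p},\bt_{\tilde p^{-1}}$ (as in the proof of Lemma~\ref{lemma:flipcork}), I would factor $\bt_{d;p}$, up to $\sim$, as a concatenation of tilings each of whose horizontal dominoes lie entirely inside $\cD_1\times[0,\cdot]$ or entirely inside $\cD_2\times[0,\cdot]$; such tilings are precisely the elements of $\operatorname{im}\iota_1$, resp.\ $\operatorname{im}\iota_2$, which finishes the induction.

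The hard part is this last factorization of the generators. The canonical tilings $\bt_p$ and $\bt_{\tilde p^{-1}}$ produced by Lemma~\ref{lemma:flipcork} occupy all of $\cD$, and when $\flux(d;p)$ is near its extreme admissible values — so that no flux-equivalent plug fits inside $\cD_1$ — one must still show that the portion of $\bt_{d;p}$ spilling into the complementary strip splits off as a separate factor supported in $\cD_2$, or, alternatively, that such extreme-flux generators are themselves products of generators with smaller flux, for which the argument above applies. I expect this bookkeeping, rather than the group theory of the first two paragraphs, to be where the real work lies; a correct but unoptimised bound on the vertical space needed drops out of it along the way.
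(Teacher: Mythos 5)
Your overall skeleton is sound and close in spirit to the paper's: the paper also proceeds by a one-parameter induction (Lemma \ref{lemma:thicksublemma}, growing $M$ with $L$ fixed, seeded by Lemma \ref{lemma:34}), also invokes Corollary \ref{coro:generators}, and also tries to push each generator $\bt_{d;p}$ into a smaller rectangle that is regular by hypothesis, extending by vertical dominoes. Your group-theoretic reductions (the maps $\iota_k$, $\operatorname{im}\iota_k=\langle a\rangle$, and ``$G^{+}_{\cD}=\langle a\rangle$ suffices'') are fine. But the proposal has a genuine gap exactly where you say you ``expect the real work to lie'': the extreme-flux generators. When $\phi_{-1}$ and $\phi_{+1}$ have opposite signs and are both close to their extreme admissible values, no plug realizing that flux fits inside $\cD_1$ or $\cD_2$ (it is forced to mark squares both in the bottom two rows and in the top two rows), so neither of your two suggested outs is established: you do not show that the spill-over ``splits off as a factor supported in the complementary strip'' (it does not, in any obvious way -- the canonical pieces $\bt_p$, $\bt_{\tilde p^{-1}}$ interleave the marked squares of both ends along the single path $\gamma_0$), and the alternative claim that an extreme-flux generator is a product of smaller-flux generators is asserted without any mechanism: Lemma \ref{lemma:flux} only identifies generators with \emph{equal} flux, and Lemma \ref{lemma:decfloor} decomposes a floor into single-domino generators without changing any fluxes, so nothing you cite reduces flux.

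This case is precisely where the paper's proof does its real work. There, for $\phi_{-1}\phi_{+1}<0$ with $|\phi_{\pm1}|\ge l$, one chooses the plug marking \emph{all} squares of color $-1$ in the first column and all squares of color $+1$ in the last column; the induction hypothesis is applied to the middle subrectangle $\tilde\cD=[0,L]\times[1,M-1]$ with the truncated flux $\tilde\phi=(\phi_{-1}+l,\phi_0,\phi_{+1}-l)$; and then an explicit chain of flip-equivalences $\bt_0\approx\bt_1\approx\bt_3\approx\bt_2$, built with an auxiliary hamiltonian path $\gamma_1$ that hugs the boundary (Figure \ref{fig:penultimate}) and the two-floor analysis of Lemma \ref{lemma:flipcork}/\cite{primeiroartigo}, shows that the part of $\bt_{d;p}$ living outside $\tilde\cD\times[-\tilde N,\tilde N]$ can be flipped to all-vertical. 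Some such explicit construction (or a genuinely new argument for reducing flux) is needed to close your induction; without it the proposal does not prove the lemma.
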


Together with Lemma \ref{lemma:thin},
this completes the proof of Theorem \ref{theo:rectangle}.
We first prove a sublemma.

\begin{lemma}
\label{lemma:thicksublemma}
Let $L \ge 3$ be a fixed number.
If $L$ is odd and $[0,L] \times [0,M]$ is regular
for both $M = 4$ and $M = 6$
then $[0,L] \times [0,M]$ is regular for any even $M > 6$.
If $L$ is even and $[0,L] \times [0,M]$ is regular
for all $M \in [3,6] \cap \ZZ$
then $[0,L] \times [0,M]$ is regular for any $M > 6$.
\end{lemma}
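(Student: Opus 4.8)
The plan is to argue by induction on $M$, the inductive step reducing a rectangle of height $M>6$ to rectangles of heights $M-4$ and $6$ (and, in the odd case, also $4$). Since stepping by $4$ preserves $M \bmod 4$, the base heights $\{4,6\}$ for odd $L$ (the two even residues) and $\{3,4,5,6\}$ for even $L$ (all four residues) are exactly what is needed both to launch the induction and to serve as the fixed auxiliary pieces of the step. For the step itself I would invoke Corollary~\ref{coro:generators}: after fixing a Hamiltonian path $\gamma_0$ of $\cD=[0,L]\times[0,M]$, it suffices to verify, for a complete family of compatible plugs, that $\bt_{d;p}\sim a^{\Tw(\bt_{d;p})}$ for every domino $d\subset\cD$ not contained in $\gamma_0$, where $a$ is the standard generator of twist $1$ formed by a $2\times3\times4$ block with vertical dominoes elsewhere; combined with surjectivity of $\Tw$ (Lemma~\ref{lemma:nontrivialtwist}) and the routine check $a\ast c\sim c\ast a$ for a tiling $c$ of $\cR_1$, this yields that $\Tw\colon G^{+}_{\cD}\to\ZZ$ is an isomorphism, i.e., that $\cD$ is regular.

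I would take $\gamma_0$ to be the row-by-row boustrophedon path, whose key feature is that its restriction to any band $[0,L]\times[c,c+h]$ of consecutive rows is a Hamiltonian path of that sub-rectangle. A domino $d$ not in $\gamma_0$ joins two vertically adjacent squares and hence lies in a band $S_d$ of height $4$ or $6$ which, being a translate of one of the hypothesized-regular rectangles, is itself regular. By Lemma~\ref{lemma:flux} the class of $\bt_{d;p}$ modulo $\sim$ depends only on the flux triple $\flux(d;p)$, and by Remark~\ref{rem:twflux} so does $\Tw(\bt_{d;p})$; using the folding construction of Lemma~\ref{lemma:welldefined} together with the manipulations of Lemmas~\ref{lemma:floorsast} and \ref{lemma:decfloor}, I would identify $\bt_{d;p}$, for each admissible flux, with the image under the twist-preserving homomorphism $G^{+}_{S_d}\to G^{+}_{\cD}$ (padding with vertical dominoes) of the corresponding generator of $G^{+}_{S_d}$. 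Since $S_d$ is regular that generator equals $a_{S_d}^{\Tw}$, which maps to $a^{\Tw(\bt_{d;p})}$, and the induction (or base case) closes.

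The hard part will be the flux bookkeeping in this identification. The path-intervals $I_{d;-1}$ and $I_{d;+1}$ are genuinely longer in $\cD$ than inside $S_d$, so a plug $p$ can place a large colour imbalance far from $d$ and realize flux values of $d$ that no plug inside $S_d$ can match. I would handle this in two moves: first, choose $S_d$ so that $d$ has two rows of slack on each side whenever $d$ is not adjacent to a boundary row of $\cD$ — this is precisely why height $6$, not merely $4$, is needed for odd $L$, and why the whole range $\{3,4,5,6\}$ is needed for even $L$ — noting that when $d$ is at the boundary the interval on the cramped side is already short in $\cD$ itself, so no genuine imbalance is lost; second, when the flux nonetheless reaches beyond $S_d$, peel four rows off $\cD$, absorb the excess flux into a tiling supported in those rows which respects $\gamma_0$ and is therefore $\approx$ vertical, and apply the induction hypothesis to the height-$(M-4)$ rectangle. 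The remaining chores — tracking the plugs, which keep the discarded regions from being honest rectangles, and checking that the blocks defining $a$ in the various sub-rectangles represent the same element of $G^{+}_{\cD}$ — are routine, and the odd-$L$ and even-$L$ arguments differ only in a parity remark and in the list of base heights quoted.
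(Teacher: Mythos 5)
Your skeleton is the same as the paper's (Corollary~\ref{coro:generators}, a boustrophedon path, plugs chosen so that $\bt_{d;p}$ lives in a band of at most six rows that is regular by hypothesis, induction on the long side), and the local cases you describe are fine. The gap is in the one sentence you devote to the extreme-flux case, which is precisely the hard part. Suppose $\phi_{-1}$ and $\phi_{+1}$ have opposite signs and are both larger than any band of bounded height around $d$ can supply. Then \emph{every} compatible plug realizing this flux is unbalanced on each side of $d$: it has a net excess of one color below $d$ and of the other color above $d$. In the construction of $\bt_{d;p}$ these excess squares are paired with each other along $\gamma_0$, producing horizontal runs that sweep past $d$ through essentially the whole rectangle. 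Your remedy --- ``peel four rows off $\cD$, absorb the excess flux into a tiling supported in those rows which respects $\gamma_0$ and is therefore $\approx$ vertical'' --- cannot work as stated: if the plug's restriction to the peeled rows is unbalanced, the corresponding part of the region $\cR_{N;p,\emptyplug}$ is unbalanced and admits no tiling supported in those rows, so the dominoes matching that excess must leave the band and cross the level of $d$; and if that restriction were balanced, then by Lemma~\ref{lemma:flux} you could have replaced $p$ by a plug with the same flux supported near $d$ in the first place, so there is nothing to absorb. Peeling rows from one end only can never resolve this, because the bottom imbalance has to be cancelled against the top imbalance.

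This is exactly where the paper's proof does real work: it chooses the plug to consist of all $l$ squares of one color in the first column and all $l$ squares of the opposite color in the last column, peels one column from \emph{each} end simultaneously (induction $M\to M-2$, with the interior rectangle regular by hypothesis), and then needs a separate, nontrivial argument --- the auxiliary boundary path $\gamma_1$, the intermediate tilings $\bt_1,\bt_3,\bt_2$ of Figure~\ref{fig:penultimate}, and planar flip-connectivity on the quadriculated surface $\gamma_1\times[-N,N]$ --- to show that the long runs matching the two ends against each other can be flipped away once $a^k$ is parked away from the boundary (this is also where $M>6$ is used). Your proposal contains no substitute for this step, so as written it proves the easy cases twice and leaves the case that forced the induction unproved.
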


\begin{proof}
The proof is by induction on $M$.
We take the hamiltonian path $\gamma_0$
as indicated in Figure \ref{fig:rectangularpaths}.
We apply Corollary \ref{coro:generators}:
we must prove that for every domino $d \subset \cD$ and $\phi \in \Phi_d$
there exists a compatible plug $p$ for which $\flux(d;p) = \phi$ and
$\bt_{d;p} \sim a^k$ (for some $k \in \ZZ$).
Here again $a \in \cT(\cR_4)$ is a tiling
similar to the one shown in Figure \ref{fig:tdp}.

Consider first $d$ in the first column.
Clearly $|\phi_{-1}| + |\phi_0| < L$
for any $\phi \in \Phi_d$ and therefore also $|\phi_{+1}| < L$.
For any $\phi \in \Phi_d$ 
take $p = p_{\phi} \in \cP$ (with $\flux(d;p) = \phi$)
by marking in $I_{d;+1}$ the first $|\phi_{+1}|$ unit squares
of color $\sign(\phi_{+1})$
(as in the first example in Figure \ref{fig:thicksublemma}).
This implies that $p$ marks only unit squares
in the first four columns.
Let $\tilde\cD = [0,L] \times [0,4] \subseteq \cD$
be the subdisk formed by the first four columns.
Following the usual construction, all dominoes of $\bt_{d,p}$
outside $\tilde\cD \times [-N,N]$ are vertical with the same parity.
Let $\tilde\bt_{d,p}  \in \cT(\tilde\cD\times [-N,N])$ be the restriction
of $\bt_{d,p}$ to $\tilde\cD\times [-N,N]$.
By hypothesis, if $N$ is taken sufficiently large then
$\tilde\bt_{d,p} \approx a^k$, i.e.,
there exists a sequence of flips taking one tiling to the other.
By mere juxtaposition of vertical dominoes
in $(\cD \smallsetminus \tilde\cD) \times [-N,N]$
we have $\bt_{d,p} \approx a^k$,
proving this first case.
A similar argument holds if $d$ is in the last column.

\begin{figure}[ht]
\begin{center}
\includegraphics[scale=0.275]{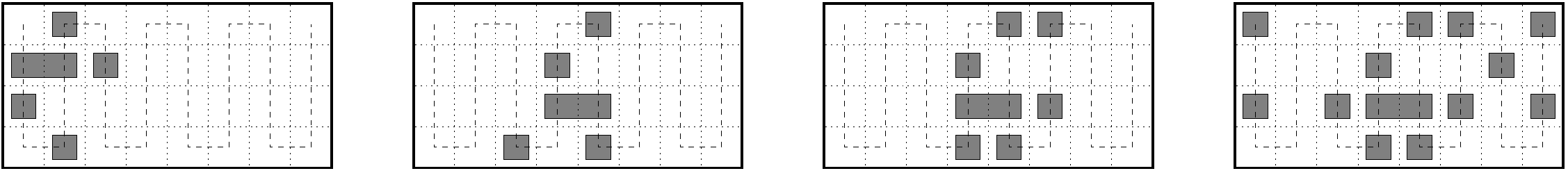}
\end{center}
\caption{A rectangle $\cD = [0,L] \times [0,M]$,
a path $\gamma_0$ and a few examples of pairs $(d,p)$;
$d$ is a domino not contained in $\gamma_0$ and $p$ is a compatible plug.}
\label{fig:thicksublemma}
\end{figure}

Consider now $d$ in some intermediate column
(neither the first nor the last).
We first consider the subcase where 
$\phi_{-1} \phi_{+1} \ge 0$.
We clearly have $|\phi_0| < L$
and therefore also  $|\phi_{-1}| + |\phi_{+1}| < L$.
Construct $p = p_{\phi}$ by selecting squares in $I_{d;\pm 1}$
as near as possible to the columns occupied by $d$
(as in the second example in Figure \ref{fig:thicksublemma}).
At most $6$ columns are occupied:
take $\tilde\cD \subset \cD$ to be the union of the occupied columns.
By hypothesis, $\tilde\cD$ is regular;
the proof proceeds as in the previous case.

Consider finally the subcase where $d$ is in some intermediate column
and $\phi_{-1} \phi_{+1} < 0$.
Assume $\phi_{-1} < 0$ (the other case is similar).
Let $l \le \lceil \frac{L}{2} \rceil$ be
the number of unit squares of color $-1$ in the first column of $\cD$.
Thus, if $L$ is even then $l = \frac{L}{2}$;
if $L$ is odd then either $l = \frac{L-1}{2}$ or $l = \frac{L+1}{2}$.
Notice that $l$ is also
the number of unit squares of color $+1$ in the last column of $\cD$.
If either $|\phi_{-1}| < l$ or $|\phi_{+1}| < l$ then
$p = p_{\phi}$ can be constructed, as in the previous cases,
so as to occupy at most $6$ columns and the proof proceeds as before
(as in the third example in Figure \ref{fig:thicksublemma}).
We may thus assume $|\phi_{-1}| \ge l$ and $|\phi_{+1}| \ge l$.

Construct $p = p_\phi$ marking all unit squares of color $-1$ 
in the first column and all unit squares of color $+1$ in the last column
(as in the fourth example in Figure \ref{fig:thicksublemma}).
Let $\tilde\cD = [0,L] \times [1,M-1] \subset \cD$;
$\tilde\cD$ is regular by induction hypothesis.
Let $\tilde\phi = (\phi_{-1} + l, \phi_0, \phi_{+1} - l)$
and let $\tilde p$ be the plug for $\tilde\cD$
obtained from $p$ by intersection, i.e.,
by discarding the $l$ marked squares in the first column
and the $l$ marked squares in the last column.
We have $\flux(\tilde p) = \tilde\phi$.
Construct as usual the tiling
$\tilde\bt_{d;\tilde p} \in \cT(\tilde\cD \times [-\tilde N,\tilde N])$.
By induction hypothesis, we have
$\tilde\bt_{d;\tilde p} \approx a^k$
provided $\tilde N$ is taken large enough (and even);
here $k = \Tw(\tilde\bt_{d;\tilde p})$.
We may also assume that $a$ occupies the last two rows
and three central columns of $\tilde\cD$,
thus leaving free at least the first row,
the first and last column of $\tilde\cD$
(we assume here $M > 6$, as we can).

Construct $\bt_0 = \bt_{d;p} \in \cT(\cD \times [-N,N])$ as usual,
matching the $l$ squares in the first column
with the $l$ squares in the last column:
these are the last matches to be addressed,
and we may leave vertical space $[-\tilde N,\tilde N]$
for the previous matches,
so that $N = \tilde N + 2l$.
Notice that $\bt_0$ respects the subregion
$\tilde\cD \times [-\tilde N,\tilde N]$
and coincides there with $\tilde\bt_{d;\tilde p}$.
We thus have $\bt_0 \approx \bt_1$,
where $\bt_1 \in \cT(\cD \times [-N,N])$ 
coincides with $a^k$ in $\tilde\cD \times [-\tilde N,\tilde N]$
and with $\bt_0$ elsewhere.
We are therefore left with proving that
$\bt_1 \approx \bt_2$
where $\bt_2 \in \cT(\cD \times [-N,N])$ 
coincides with $a^k$ (and with $\bt_1$)
in $\tilde\cD \times [-\tilde N,\tilde N]$
and is vertical elsewhere.

\begin{figure}[ht]
\begin{center}
\includegraphics[scale=0.275]{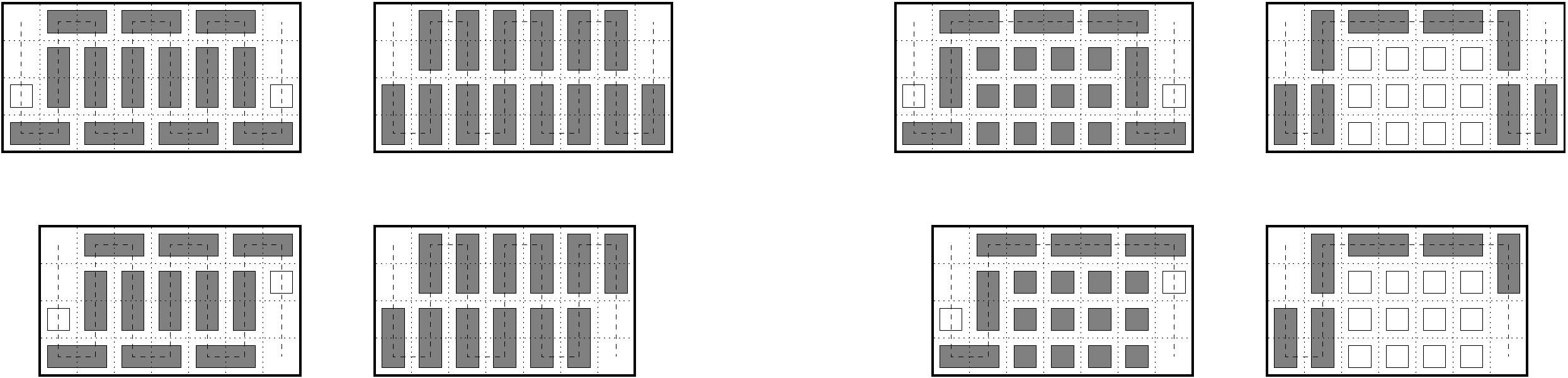}
\end{center}
\caption{The path $\gamma_1$ and the floors $[\tilde N, \tilde N+2]$
for the tilings $\bt_1$ and $\bt_3$.}
\label{fig:penultimate}
\end{figure}

We construct a new tiling $\bt_3 \approx \bt_1$;
the floors $[-\tilde N,\tilde N]$ of $\bt_1$ and $\bt_3$ coincide.
In order to construct the remaining floors of $\bt_3$,
first construct a path $\gamma_1$ coinciding with $\gamma_0$
in the first and last column of $\cD$
such that its intersection with $\tilde\cD$
is contained in the union of the first row
and the first and last columns of $\tilde\cD$,
as illustrated right half of Figure \ref{fig:penultimate}.
The floors $[-N,-\tilde N] \cup [\tilde N,N]$
of the tilings $\bt_0$ and $\bt_1$ are constructed
as in the proof of Lemma \ref{lemma:flipcork} and Figure \ref{fig:R2},
using the original path $\gamma_0$:
Figure \ref{fig:penultimate} shows floors $[\tilde N, \tilde N+2]$.
The tiling $\bt_3$ is similarly constructed,
but using, for the new floors, the path $\gamma_1$ instead.
The fact that $\bt_3 \approx \bt_1$ is proved
looking at pairs of floors $[2z,2z+2]$ (with $z \in \ZZ$),
as in Figure \ref{fig:penultimate};
we may either give an explicit sequence of flips
or use the results from \cite{primeiroartigo}.

Finally, we prove that $\bt_2 \approx \bt_3$.
Indeed, they coincide by construction outside the quadriculated surface
$\gamma_1 \times [-N,N]$,
which is respected by both tilings.
Thus, the problem of finding a sequence of flips from $\bt_2$ to $\bt_3$
is the problem of connecting by flips two rather explicit tilings 
of a quadriculated disk:
this follows either from an explicit sequence of flips
or from \cite{thurston1990} and \cite{saldanhatomei1995}.
\end{proof}

\begin{proof}[Proof of Lemma \ref{lemma:thick}]
Apply Lemma \ref{lemma:thicksublemma} for each $L \in [3,6] \cap \ZZ$:
the hypothesis is provided by Lemma \ref{lemma:34}.
We thus have that $[0,L] \times [0,M]$ is regular
provided $LM$ is even, $3 \le L \le 6$ and $M \ge 3$.
Or, equivalently,
provided $LM$ is even, $L \ge 3$ and $3 \le M \le 6$.
Apply Lemma \ref{lemma:thicksublemma} again for each $L \ge 3$
to obtain the desired conclusion.
\end{proof}

It would of course be interesting to prove that a larger class
of disks is regular.
As mentioned in Section \ref{sect:44}, regular disks seem to be common.


\section{The constant $c_{\cD}$ and
the spine $\tilde\cC^{\bullet}_{\cD}$}
\label{sect:cD}

Let $\cD$ be a fixed but arbitrary non trivial regular disk.
Let $\cC_{\cD}$ be the $2$-complex constructed
in Section \ref{sect:groupcomplex}.
Let $\Pi^{+}: \cC^{+}_{\cD} \to \cC_{\cD}$ be the double cover
constructed at the end of Section \ref{sect:groupcomplex}.
Since $\cD$ is regular, we have that
$\Tw: \pi_1(\cC^{+}_{\cD}) \to \ZZ$ is an isomorphism.
Let $\Pi: \tilde\cC_{\cD} \to \cC^{+}_{\cD}$ be its universal cover.
Let $\tilde\cP$ be the set of vertices of $\tilde\cC_{\cD}$;
select as base point a vertex $\tilde\emptyplug \in \tilde\cP$
which is a preimage of $\emptyplug \in \cP$.
Notice that the set of preimages in $\tilde\cP$ 
of the empty plug $\emptyplug \in \cP$ is naturally identified
with $\pi_1(\cC_{\cD}) \approx \ZZ \oplus \ZZ/(2)$.

As in Remark \ref{remark:cocycle},
lift $\tau^u \in C^1(\cC_{\cD};\RR)$ to
$\tau^u \in C^1(\tilde\cC_{\cD};\RR)$
(here $u \in \{e_1,e_2\}$ is fixed but arbitrary).
Since $\tilde\cC_{\cD}$ is simply connected
we have $\tau^u \in B^1(\tilde\cC_{\cD};\RR)$.
Integrate $\tau^u$ to obtain a function
$\tw: \tilde\cP \to \frac14\ZZ \subset \RR$
satisfying $\tw(\tilde\emptyplug) = 0$.
Notice that if $\bt \in \cT(\cR_N)$
is interpreted as a path from $[0,N]$ to $\cC_{\cD}$
then such a path can be lifted to $\tilde\bt: [0,N] \to \tilde\cC_{\cD}$
and we then have $\Tw(\bt) = \tw(\tilde\bt(N)) - \tw(\tilde\bt(0))$.
Let $\sigma: \tilde\cC_{\cD} \to \tilde\cC_{\cD}$
be a generator of the group of deck transformations
of the covering map $\Pi: \tilde\cC_{\cD} \to \cC^{+}_{\cD}$:
choose $\sigma$ such that $\tw(\sigma(p)) = 1+\tw(p)$
for all $p \in \tilde\cP$.

We are interested in {\em tiling paths}:
paths $\Gamma: [N_0,N_1] \to \cC_{\cD}$
(for $N_0, N_1 \in \ZZ$) taking integers to vertices (i.e., plugs)
and intervals $[j-j,j]$ (for $j \in \ZZ$) to edges (i.e., floors).
Such paths can of course be lifted to
$\tilde\Gamma: [N_0,N_1] \to \tilde\cC_{\cD}$
with $\Pi^{+} \circ \Pi \circ \tilde\Gamma = \Gamma$.
The lift is well defined if a starting point $\tilde\Gamma(N_0)$ is given.
As discussed in Section \ref{sect:groupcomplex},
tiling paths correspond to tilings of corks
$\cR_{N_0,N_0,p_0,p_1}$ where $p_0 = \Gamma(N_0)$ and $p_1 = \Gamma(N_1)$.
Consistently, write
$\Tw(\Gamma) = \tw(\tilde\Gamma(N_1)) - \tw(\tilde\Gamma(N_0))$.

\begin{figure}[t]
\begin{center}
\includegraphics[scale=0.275]{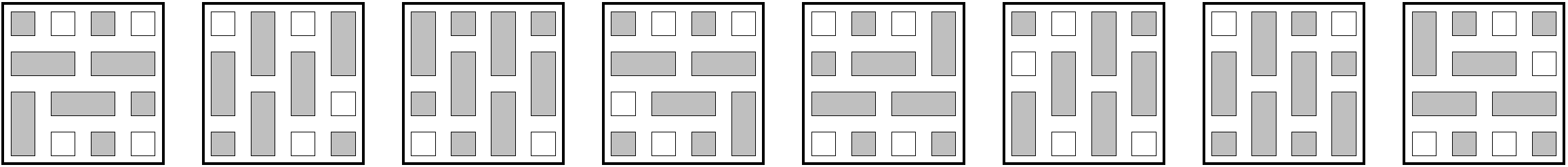}
\end{center}
\caption{A simple closed tiling path $\Gamma: [0,8] \to \cC_{\cD}$
for $\cD = [0,4]^2$; $\Tw(\Gamma) = 12$. }
\label{fig:rocket}
\end{figure}

A tiling path $\Gamma$ is {\em closed} if $\Gamma(N_0) = \Gamma(N_1)$;
Figure \ref{fig:rocket} shows an example of a closed tiling path
$\Gamma: [0,8] \to \cC_{\cD}$ for $\cD = [0,4]^2$.
A closed tiling path $\Gamma: [0,N] \to \cC_{\cD}$ is {\em simple}
if $k_0, k_1 \in \ZZ \cap [0,N)$ and $\Gamma(k_0) = \Gamma(k_1)$
imply $k_0 = k_1$.
Clearly, if $\Gamma: [0,N] \to \cC_{\cD}$
is a simple closed tiling path then $N \le |\cP|$.
Let $c_{\cD} \in \QQ \cap (0,+\infty)$
be the maximum value of $\Tw(\Gamma)/N$
taken over all simple closed tiling paths $\Gamma: [0,N] \to \cC_{\cD}$;
let $\Gamma_{\bullet}: [0,N_{\bullet}] \to \cC_{\cD}$
be a closed tiling path for which the maximum value is acheived.
Since we are taking the maximum over a non empty finite set,
the maximum is well defined.
Lemma \ref{lemma:cD} below provides
alternative characterizations of $c_{\cD}$.
Lemma \ref{lemma:tropical} and Example \ref{example:cD44}
show how to compute $c_{\cD}$
for a given regular quadriculated disk $\cD$.

For $p_0, p_1 \in \cP$ and $N \in \NN$,
let $m_{N;p_0,p_1} \in \{-\infty\} \cup \frac14 \ZZ$
be the maximum value of $\Tw(\Gamma)$
for $\Gamma: [0,N] \to \cC_{\cD}$ a tiling path with 
$\Gamma(0) = p_0$, $\Gamma(N) = p_1$.
We follow here the convention that the maximum of the empty set
is $-\infty$.
Thus, for instance, if $\cD_{p_0,p_1}$ admits no tiling
(in particular, if $p_0$ and $p_1$ are not disjoint)
then $m_{1;p_0,p_1} = -\infty$.
Even more degenerately,
$m_{0,p_0,p_1}$ equals $0$ if $p_0 = p_1$ and $-\infty$ otherwise.
The following result provides us with estimates for $m_{N,p_0,p_1}$.
We will further discuss these numbers below;
see Equation \ref{equation:tropicalpower} and Lemma \ref{lemma:tropical}.

\begin{lemma}
\label{lemma:cD}
Let $\cD$ be a fixed but arbitrary non trivial regular disk.
Let $c_{\cD} \in \QQ \cap (0,+\infty)$ be as defined above.
\begin{enumerate}
\item{For any closed tiling path $\Gamma: [0,N] \to \cC_{\cD}$
we have $|\Tw(\Gamma)| \le c_{\cD} N$.}
\item{There exists constants $d_{-}, d_{+} \in \RR$
(depending on $\cD$ only; not depending on $N$, $p_0$ or $p_1$) such that,
for all $p_0, p_1 \in \cP$ and all $N \ge 4|\cD|$,
\[ c_{\cD} N + d_{-} \le m_{N;p_0,p_1} \le c_{\cD} N + d_{+}. \]}
\end{enumerate}
\end{lemma}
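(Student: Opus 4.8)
The plan is to treat part (1) as the main tool and to deduce both inequalities of part (2) from it, using only the additivity of $\Tw$ over concatenation of tiling paths and the fact (Lemma \ref{coro:A1Npos}) that any two plugs are joined by tiling paths of every length $\ge 4|\cD|$.

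For part (1), I would first observe that an arbitrary closed tiling path $\Gamma\colon[0,N]\to\cC_{\cD}$ decomposes into simple closed tiling paths whose lengths add up to $N$: walk along $\Gamma$ until a vertex first repeats, excise the resulting simple closed subloop (this is legitimate because the two occurrences of the repeated vertex are the same plug, so the outgoing edge can be reattached), and iterate. Since $\Tw(\Gamma)$ is the difference of the values of $\tw$ at the endpoints of a lift of $\Gamma$ to $\tilde\cC_{\cD}$, it is additive along such a decomposition, so $\Tw(\Gamma)=\sum_r\Tw(\Gamma_r)\le\sum_r c_{\cD}\,|\Gamma_r|=c_{\cD}N$ by the very definition of $c_{\cD}$. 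For the opposite sign I would use that the reverse $\Gamma^{-1}$ is again a closed tiling path with $\Tw(\Gamma^{-1})=-\Tw(\Gamma)$, so applying the same bound to $\Gamma^{-1}$ yields $|\Tw(\Gamma)|\le c_{\cD}N$. (The same reversal remark shows $c_{\cD}$ is also the maximum of $|\Tw(\Gamma)|/N$ over simple closed tiling paths; that $c_{\cD}>0$ is where Lemma \ref{lemma:nontrivialtwist} enters, producing a closed tiling path of nonzero — hence after reversal positive — twist.)

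For the upper bound in part (2), I would fix once and for all, for every ordered pair of plugs $(q_0,q_1)$, a tiling path $\Delta_{q_0,q_1}$ from $q_0$ to $q_1$ of length $4|\cD|$ (Lemma \ref{coro:A1Npos}), and set $C_\Delta=\max_{q_0,q_1}|\Tw(\Delta_{q_0,q_1})|$, a constant depending on $\cD$ only. Given $N\ge4|\cD|$ and plugs $p_0,p_1$, pick a tiling path $\Gamma\colon[0,N]\to\cC_{\cD}$ from $p_0$ to $p_1$ attaining $m_{N;p_0,p_1}$ (the maximum runs over a finite nonempty set). Then $\Gamma\ast\Delta_{p_1,p_0}$ is a closed tiling path of length $N+4|\cD|$, so part (1) gives $m_{N;p_0,p_1}\le c_{\cD}(N+4|\cD|)-\Tw(\Delta_{p_1,p_0})\le c_{\cD}N+d_{+}$ with $d_{+}=4|\cD|\,c_{\cD}+C_\Delta$.

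For the lower bound — the step I expect to be the real obstacle — I would write $p_{\bullet}=\Gamma_{\bullet}(0)$ and, for $N\ge8|\cD|$, take $k$ to be the largest integer with $kN_{\bullet}\le N-8|\cD|$, so the residue $r=N-8|\cD|-kN_{\bullet}$ lies in $[0,N_{\bullet})$ and $kN_{\bullet}\ge N-8|\cD|-N_{\bullet}$. Concatenating a length-$4|\cD|$ path from $p_0$ to $p_{\bullet}$, then $k$ copies of $\Gamma_{\bullet}$, then a length-$(4|\cD|+r)$ path from $p_{\bullet}$ to $p_1$, yields a tiling path from $p_0$ to $p_1$ of length exactly $N$ whose twist is $c_{\cD}kN_{\bullet}$ plus the twists of the two connectors; since $p_{\bullet}$ is fixed and $r$ ranges over a finite set, those connector twists are bounded by a $\cD$-constant, giving $m_{N;p_0,p_1}\ge c_{\cD}N+d_{-}$ for a suitable $\cD$-constant $d_{-}$, and I would absorb the finitely many cases $4|\cD|\le N<8|\cD|$ (over the finitely many pairs $p_0,p_1$) by decreasing $d_{-}$ if needed. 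The delicate points are precisely this length bookkeeping — producing a path of length \emph{exactly} $N$ while keeping the twist within $O(1)$ of $c_{\cD}N$ — and verifying that the connector contributions stay bounded uniformly in $N$; both become routine once one insists on keeping $p_{\bullet}$ fixed and letting only the right-hand connector's length vary.
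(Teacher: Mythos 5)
Your proposal is correct and follows essentially the same route as the paper: part (1) by decomposing a closed tiling path into simple closed loops (your iterated excision is just a reorganization of the paper's induction on double points, with the sign handled by reversal, which the paper leaves implicit), and part (2) by closing up an extremal path with a fixed length-$4|\cD|$ connector for $d_{+}$ and by concatenating connectors with $\lfloor (N-8|\cD|)/N_{\bullet}\rfloor$ copies of $\Gamma_{\bullet}$ for $d_{-}$, exactly as in the paper. Your explicit absorption of the finitely many cases $4|\cD|\le N<8|\cD|$ is a minor point the paper glosses over, not a different argument.
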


\begin{proof}
A {\em double point} for a closed tiling path $\Gamma: [0,N] \to \cC_{\cD}$
is a pair $\{k_0,k_1\} \subset \ZZ \cap [0,N)$ 
with $k_0 < k_1$ and $\Gamma(k_0) = \Gamma(k_1)$.
We prove the first item by induction on the number of double points:
if there are $0$ double points the curve is simple
and the claim holds by definition of $c_{\cD}$.
Let $\{k_0,k_1\}$ be a double point.
Let $\Gamma_1: [0,k_1-k_0] \to \cC_{\cD}$
and $\Gamma_2: [0,N-k_1+k_0] \to \cC_{\cD}$
be closed tiling paths defined by
$\Gamma_1(k) = \Gamma(k_0+k)$ and
$\Gamma_2(k) = \Gamma(k_1+k)$;
if $k_1+k > N$ we interpret $\Gamma(k_1+k) = \Gamma(k_1+k-N)$.
By induction hypothesis we have
$|\Tw(\Gamma_1)| \le c_{\cD} (k1-k0)$ and
$|\Tw(\Gamma_2)| \le c_{\cD} (N-k1+k0)$.
From the definition of $\Tw$ we have
$\Tw(\Gamma) = \Tw(\Gamma_1) + \Tw(\Gamma_2)$.
We thus have
$|\Tw(\Gamma)| \le |\Tw(\Gamma_1)| + |\Tw(\Gamma_2)| \le c_{\cD} N$,
completing the proof of the first item.

Recall that
$\Gamma_\bullet: [0,N_\bullet] \to \cC_{\cD}$
is a simple closed tiling path
satisfying $\Tw(\Gamma_\bullet) = c_{\cD} N_\bullet$.
Let $\bt_{\bullet} \in \cT(\cR_{0,N_\bullet;p_{\bullet},p_{\bullet}})$
be the corresponding tiling where $p_\bullet = \Gamma_\bullet(0) \in \cP$.
Let $\bt_0 \in \cT(\cR_{0,4|\cD|;p_0,p_\bullet})$
be an arbitrary tiling
(its existence is guaranteed by Lemma \ref{lemma:cork}).
Similarly,
for each $k \in [4|\cD|,4|\cD|+N_\bullet] \cap \ZZ$,
let $\bt_k \in \cT(\cR_{0,k;p_\bullet,p_1})$
be an arbitrary tiling.
Let $\tilde d$ be the minimum value of $\Tw(\bt_0) + \Tw(\bt_k)$.
For $N \ge 8|\cD|$, consider the tiling
$\bt = \bt_0 \ast \bt_{\bullet}^j \ast \bt_k \in \cT(\cR_{0,N;p_0,p_1})$,
where $k = 4|\cD| + ( (N-8|\cD|) \bmod N_{\bullet} )$
and $j = \lfloor (N-8|\cD|)/N_{\bullet} \rfloor$.
In particular, $jN_\bullet \ge N - 8|\cD| - N_{\bullet}$.
We have 
\[ \Tw(\bt) = \Tw(\bt_0) + c_{\cD} jN_{\bullet} + \Tw(\bt_k)  
\ge c_{\cD} N - c_{\cD} (8|\cD| + N_{\bullet}) + \tilde d, \]
obtaining the desired $d_{-}$.

For each pair $(p_0,p_1) \in \cP^2$ let
$\bt_{p_1,p_0} \in \cT(\cR_{0,4|\cD|;p_1,p_0})$ be an arbitrary tiling.
For any $\bt \in \cT(\cR_{0,N;p_0,p_1})$ we have from the first item that
$\Tw(\bt \ast \bt_{p_1,p_0}) \le c_{\cD} (N+4|\cD|)$
and therefore
\[ \Tw(\bt) \le c_{\cD} N + 4c_{\cD} |\cD| - \Tw(\bt_{p_1,p_0}); \]
taking the minimum of $\Tw(\bt_{p_1,p_0})$ over all $(p_0,p_1) \in \cP^2$
gives us the desired $d_{+}$ and completes the proof.
\end{proof}

We briefly describe how to compute $c_{\cD}$.
In brief, the problem of computing $c_{\cD}$ is
the problem of computing an eigenvalue of a matrix,
but in the tropical semifield.
The subject of tropical mathematics is vast,
and we do not assume any knowledge of it;
\cite{litvinov} is a nice introductory text,
with an ample bibliography.
A semifield is a an algebraic structure
of the form $(A,1,\oplus,\otimes,\cdot^{-1})$ where $A$ is a set,
$1 \in A$ and the binary operations $\oplus$ and $\otimes$ in $A$
are associative and commutative, satisfy the usual distributive law,
and $\otimes$, together with $1$ and $\cdot^{-1}$,
endow $A$ with an abelian multiplicative group structure.
An obvious example is $A = (0,+\infty) \subset \RR$
with the usual operations.
The set $\frac14 \ZZ$ is a semifield with the operations
$a \oplus b = \max\{a,b\}$ and $a \otimes b = a+b$:
this is the {\em tropical} semifield.

If $\bM^N$ is a $\cP \times \cP$ matrix with entries
$(\bM^N)_{p_0,p_1} = m_{N;p_0,p_1}$ (as in Lemma \ref{lemma:cD})
then $\bM^N$ is the tropical power of $\bM = \bM^1$:
\begin{equation}
\label{equation:tropicalpower}
(\bM^{N_0+N_1})_{(p_0,p_2)} =
\max_{p_1 \in \cP}\;
\left((\bM^{N_0})_{(p_0,p_1)} + (\bM^{N_1})_{(p_1,p_2)}\right). 
\end{equation}
For a relatively small disk $\cD$ 
it is therefore not hard to compute $\bM^N$.
In this tropical context, a diagonal matrix $D$ has diagonal entries
in $\frac14 \ZZ$ and off-diagonal matrices equal to $-\infty$.
The inverse $D^{-1}$ has diagonal entries
$(D^{-1})_{p,p} = -D_{p,p}$
and conjugation $D^{-1} \bM^N D$ is of course defined
with tropical operations:
\[ (D^{-1}\bM^N D)_{p_0,p_1} = -D_{p_0,p_0} + (\bM^N)_{p_0,p_1} + D_{p_1,p_1}. \]
The number $c_{\cD}$ is (in this context) an eigenvalue of $\bM$.

\begin{lemma}
\label{lemma:tropical}
Let $\cD$ be a regular quadriculated disk and
let $\bM$ be the tropical matrix constructed above.
Let $D$ be a diagonal matrix and $N \in \NN^\ast$; then
\begin{equation}
\label{equation:upperc}
c_{\cD} \le \frac{m}{N}, \qquad
m =  \max_{p_0,p_1 \in \cP} (D^{-1}\bM^N D)_{p_0,p_1}.
\end{equation}
\end{lemma}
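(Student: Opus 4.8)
The plan is to exploit the fact that $c_{\cD}$ behaves like a tropical eigenvalue of $\bM$, so that any ``tropical conjugate'' $D^{-1}\bM^N D$ bounds it from above through its largest entry. The starting point is that conjugation by an invertible diagonal matrix commutes with tropical powers: since $D$ is diagonal with finite diagonal entries, $D\otimes D^{-1}$ is the tropical identity (with $0$ on the diagonal and $-\infty$ off of it), and therefore by telescoping the $K$-fold tropical power of $D^{-1}\bM^N D$ equals $D^{-1}\bM^{NK}D$ for every $K\in\NN^\ast$, where $\bM^{NK}$ is the tropical $NK$-th power of $\bM$ as in Equation \ref{equation:tropicalpower}. (The presence of $-\infty$ entries in $\bM^N$ or $D^{-1}\bM^N D$ is harmless, since $-\infty$ is absorbing for $\otimes$ and neutral for $\oplus=\max$.)

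Next I would read off the upper bound. Expanding the $K$-fold tropical power of $A:=D^{-1}\bM^N D$, each entry of $A^{\otimes K}$ is a maximum of sums of $K$ entries of $A$, every one of which is at most $m=\max_{p_0,p_1\in\cP}(D^{-1}\bM^N D)_{p_0,p_1}$; hence all entries of $A^{\otimes K}=D^{-1}\bM^{NK}D$ are at most $Km$. On the diagonal the conjugation cancels, $(D^{-1}\bM^{NK}D)_{p,p}=-D_{p,p}+(\bM^{NK})_{p,p}+D_{p,p}=(\bM^{NK})_{p,p}=m_{NK;p,p}$, so $m_{NK;p,p}\le Km$ for every $p\in\cP$ and every $K$.

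Finally I would feed in the lower bound from Lemma \ref{lemma:cD}. Choosing $K$ large enough that $NK\ge 4|\cD|$, item 2 of that lemma, applied with $p_0=p_1=p$, gives $m_{NK;p,p}\ge c_{\cD}NK+d_{-}$. Combining the two inequalities yields $c_{\cD}NK+d_{-}\le Km$, i.e.\ $c_{\cD}\le \frac{m}{N}-\frac{d_{-}}{NK}$; letting $K\to\infty$ gives $c_{\cD}\le \frac{m}{N}$, as claimed. Here one should note that $m>-\infty$ so the statement is not vacuous: the vertical floors already give $(\bM^N)_{\emptyplug,\,\cdot}\ge 0$ for every $N$.

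I do not expect a serious obstacle: once Lemma \ref{lemma:cD} is in hand, this is essentially the standard fact that a sub-invariant potential bounds a max-plus eigenvalue from above. The only points needing care are the bookkeeping with the additive identity $-\infty$ in the max-plus semiring, the verification that $D\otimes D^{-1}$ is the tropical identity so that the telescoping $ (D^{-1}\bM^N D)^{\otimes K}=D^{-1}\bM^{NK}D$ is legitimate, and making sure to take the limit in $K$ rather than expecting the bound for a single $K$.
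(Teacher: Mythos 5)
Your proof is correct and is essentially the paper's argument: the identity $(D^{-1}\bM^N D)^{\otimes K}=D^{-1}\bM^{NK}D$ with all entries bounded by $Km$ is exactly the paper's inductive estimate $m_{kN;p_0,p_1}\le km + D_{p_0,p_0}-D_{p_1,p_1}$, and both then invoke the lower bound of Lemma \ref{lemma:cD} and let $k\to\infty$. Your explicit handling of the $-\infty$ entries and of the limit in $K$ only makes precise what the paper leaves implicit.
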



\begin{proof}
It follows from Equation \ref{equation:tropicalpower}
(and induction) that
\[ m_{kN;p_0,p_1} = (\bM^{kN})_{p_0,p_1} \le km + D_{p_0,p_0} - D_{p_1,p_1} \]
for all $k \in \NN^\ast$ (and for all $p_0, p_1 \in \cP$).
From Lemma \ref{lemma:cD}, $c_{\cD} \le m/N$.
\end{proof}

\begin{example}
\label{example:cD44}
For $\cD = [0,4]^2$, we have $c_{\cD} = \frac32$.
Indeed, the closed tiling path
shown in Figure \ref{fig:rocket} implies $c_{\cD} \ge \frac32$.
On the other hand,
for $N = 4$ it is not too hard to obtain a diagonal matrix $D$
for which $m = 6$
(where $m$ is defined in Equation \ref{equation:upperc},
Lemma \ref{lemma:tropical}):
we therefore have $c_{\cD} \le \frac32$.
In this example therefore we can take $N_{\bullet} = 8$
and $\Gamma_{\bullet}: [0,N_{\bullet}] \to \cC_{\cD}$
as in Figure \ref{fig:rocket}.
\end{example}



Recall that $\Gamma_{\bullet}$ is a simple closed tiling path
with $\Tw(\Gamma_{\bullet}) = c_{\cD} N_{\bullet}$.
Extend it to define
$\Gamma_{\bullet}: \RR \to \cC_{\cD}$,
periodic with period $N_{\bullet}$.
Lift this path to define
$\tilde\Gamma_{\bullet}: \RR \to \tilde\cC_{\cD}$
with $\tilde\Gamma_{\bullet}(t+N_{\bullet}) =
\sigma^m(\tilde\Gamma_{\bullet}(t))$
where $m = c_{\cD} N_{\bullet}$.
(Recall that $\sigma: \tilde\cC_{\cD} \to \tilde\cC_{\cD}$
is a deck transformation.)
The {\em spine} of $\tilde\cC_{\cD}$
is $\tilde\cC_{\cD}^{\bullet} \subset \tilde\cC_{\cD}$,
the image of $\tilde\Gamma_{\bullet}$.
Clearly, the spine $\tilde\cC_{\cD}^{\bullet}$ is
a $1$-complex isomorphic to $\RR$, with integers being vertices.

Endow both $\tilde\cC_{\cD}^{\bullet}$ and $\tilde\cC_{\cD}$
with metric structures:
each edge has length $1$
and the distance between two vertices $p_0$ and $p_1$
is the minimal $N$ for which there exists a tiling path $\Gamma$
with $\Gamma(0) = p_0$ and $\Gamma(N) = p_1$.
For each vertex $p$ of $\tilde\cC_{\cD}$,
let $\Pi(p) \in \tilde\cC_{\cD}^{\bullet}$ be the vertex of 
$\tilde\cC_{\cD}^{\bullet}$ nearest to $p$;
in case of a draw choose arbitrarily but preserve
$\Pi(\sigma^m(p)) = \sigma^m(\Pi(p))$.
Extend $\Pi$ to $1$ and $2$-cells,
always preserving the identity $\Pi \circ \sigma^m = \sigma^m \circ \Pi$
and the fact that the restriction of $\Pi$ to $\tilde\cC_{\cD}^{\bullet}$
is the identity.
Thus, $i: \tilde\cC_{\cD}^{\bullet} \to \tilde\cC_{\cD}$
and $\Pi: \tilde\cC_{\cD} \to \tilde\cC_{\cD}^{\bullet}$
are continuous maps taking vertices to vertices.
Clearly $\Pi \circ i$ is the identity map.

The following result shows that $i$ and $\Pi$
are quasi-isometries in the sense of Gromov.
There is a vast literature on quasi-isometries;
see for instance \cite{gromov} and \cite{sherdaverman}.
We will keep the discussion self contained.

\begin{lemma}
\label{lemma:quasi}
If $p_0, p_1$ are vertices of the spine $\tilde\cC_{\cD}^{\bullet}$
then
$d_{\tilde\cC_{\cD}^{\bullet}}(p_0,p_1) = d_{\tilde\cC_{\cD}}(p_0,p_1)$.
There exists a constant $d$ such that
$d_{\tilde\cC_{\cD}}(p,\Pi(p)) \le d$
for every vertex $p$ of $\tilde\cC_{\cD}$.
\end{lemma}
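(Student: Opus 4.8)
The plan is to establish the two assertions in turn: the isometric embedding by a ``close‑up'' argument reducing to Lemma~\ref{lemma:cD}(1), and the bounded‑distance statement by a covering‑space argument. Throughout, $\Pi$ denotes the retraction to the spine as in the statement, and for a vertex $x$ of $\tilde\cC_{\cD}$ I write $\bar x$ for its image in $\cC^{+}_{\cD}$ under the covering map; recall that $\Tw$ of a tiling path is additive under concatenation and that for a path already in $\tilde\cC_{\cD}$ it equals the difference of the values of $\tw$ at its endpoints.

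For the isometric embedding, write the spine vertices as $\tilde\Gamma_{\bullet}(j)$, $j\in\ZZ$, so that $d_{\tilde\cC^{\bullet}_{\cD}}(\tilde\Gamma_{\bullet}(k_0),\tilde\Gamma_{\bullet}(k_1))=|k_1-k_0|$, and recall $\tilde\Gamma_{\bullet}(t+N_{\bullet})=\sigma^m(\tilde\Gamma_{\bullet}(t))$ with $m=c_{\cD}N_{\bullet}\in\ZZ_{>0}$ and $\tw(\sigma^a(x))=a+\tw(x)$. The inequality $d_{\tilde\cC_{\cD}}(p_0,p_1)\le d_{\tilde\cC^{\bullet}_{\cD}}(p_0,p_1)$ for spine vertices is immediate, since a walk along the spine is a tiling path of $\tilde\cC_{\cD}$. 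For the reverse inequality I fix spine vertices $p_i=\tilde\Gamma_{\bullet}(k_i)$, assume $k_1\ge k_0$ (both metrics and the claim are symmetric under reversing tiling paths), and take any tiling path $\Gamma\colon[0,N]\to\tilde\cC_{\cD}$ from $p_0$ to $p_1$. Its projection $\bar\Gamma$ to $\cC_{\cD}$ has $\Tw(\bar\Gamma)=\tw(p_1)-\tw(p_0)$, since $\Gamma$ is its lift. I then close $\bar\Gamma$ up by appending the segment of the periodic extension of $\Gamma_{\bullet}$ running forward from $\Gamma_{\bullet}(k_1\bmod N_{\bullet})$ back to $\Gamma_{\bullet}(k_0\bmod N_{\bullet})$, which uses $\ell\in[0,N_{\bullet})$ edges with $\ell\equiv k_0-k_1\pmod{N_{\bullet}}$; that segment has twist $\tw(\tilde\Gamma_{\bullet}(k_1+\ell))-\tw(p_1)$, so the resulting closed tiling path $\Sigma$ of length $N+\ell$ satisfies
\[ \Tw(\Sigma)=\tw(\tilde\Gamma_{\bullet}(k_1+\ell))-\tw(p_0)=q\,c_{\cD}N_{\bullet},\qquad qN_{\bullet}=k_1-k_0+\ell\ge 0, \]
because $k_1+\ell\equiv k_0\pmod{N_{\bullet}}$ forces $\tilde\Gamma_{\bullet}(k_1+\ell)=\sigma^{qm}(\tilde\Gamma_{\bullet}(k_0))$. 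Lemma~\ref{lemma:cD}(1) gives $q c_{\cD}N_{\bullet}=|\Tw(\Sigma)|\le c_{\cD}(N+\ell)$, hence $k_1-k_0=qN_{\bullet}-\ell\le N$; minimizing over $\Gamma$ yields $d_{\tilde\cC_{\cD}}(p_0,p_1)\ge|k_1-k_0|$.

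For the bounded‑distance statement, given a vertex $p$ of $\tilde\cC_{\cD}$ let $p_{\bullet}$ be the image of $\tilde\Gamma_{\bullet}(0)$ in $\cC^{+}_{\cD}$. By Lemma~\ref{coro:A1Npos}, lifted to the double cover (in $\cC^{+}_{\cD}$ a path of length $N$ from $\bar p$ to $p_{\bullet}$ exists whenever $N\ge 4|\cD|$ has the correct parity), there is a tiling path $\alpha$ in $\cC^{+}_{\cD}$ from $\bar p$ to $p_{\bullet}$ of length the one value in $\{4|\cD|,4|\cD|+1\}$ of the right parity, hence at most $4|\cD|+1$. Lifting $\alpha$ to $\tilde\cC_{\cD}$ from $p$, it ends at a vertex $q=\sigma^{k}(\tilde\Gamma_{\bullet}(0))$ for some $k\in\ZZ$, since the fiber over $p_{\bullet}$ is the $\langle\sigma\rangle$‑orbit of $\tilde\Gamma_{\bullet}(0)$. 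The spine vertices in that fiber are exactly $\sigma^{jm}(\tilde\Gamma_{\bullet}(0))$, $j\in\ZZ$; picking $j$ with $|jm-k|\le m$ and using that $\sigma$ is an isometry of $\tilde\cC_{\cD}$,
\[ d_{\tilde\cC_{\cD}}\!\left(q,\sigma^{jm}(\tilde\Gamma_{\bullet}(0))\right)=d_{\tilde\cC_{\cD}}\!\left(\tilde\Gamma_{\bullet}(0),\sigma^{jm-k}(\tilde\Gamma_{\bullet}(0))\right)\le C,\qquad C:=\max_{|b|\le m} d_{\tilde\cC_{\cD}}\!\left(\tilde\Gamma_{\bullet}(0),\sigma^{b}(\tilde\Gamma_{\bullet}(0))\right), \]
and $C$ is finite because all distances in $\tilde\cC_{\cD}$ are finite ($\cC_{\cD}$ is connected by tiling paths of every length $\ge 4|\cD|$ and, by Lemma~\ref{lemma:nontrivialtwist}, carries a closed tiling path of twist $1$). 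Hence $d_{\tilde\cC_{\cD}}(p,\Pi(p))\le d_{\tilde\cC_{\cD}}(p,q)+C\le 4|\cD|+1+C$, so $d:=4|\cD|+1+C$ works uniformly.

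The main obstacle is the second part: a short connecting path, lifted into $\tilde\cC_{\cD}$, does not land on the spine but on the $\langle\sigma\rangle$‑orbit of a spine vertex, and one must see that the nearest spine vertex in that orbit is always within a \emph{uniformly} bounded number of $\sigma$‑steps, so that a maximum over the finitely many residues modulo $m$ produces a single constant. The parity bookkeeping in $\cC^{+}_{\cD}$ and the verification that all distances in $\tilde\cC_{\cD}$ are finite are routine. For the first part, the one real idea is to close the loop using $\Gamma_{\bullet}$ itself, so that the extra twist is governed exactly by the deck transformation $\sigma$; everything else is the additivity and flip‑invariance of $\Tw$ together with Lemma~\ref{lemma:cD}(1).
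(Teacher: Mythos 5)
Your proof is correct, and it is worth separating the two halves. For the first claim you use the same key idea as the paper, namely completing a competing short path in $\tilde\cC_{\cD}$ between two spine vertices to a \emph{closed} tiling path in $\cC_{\cD}$ by an arc of the spine and then invoking Lemma \ref{lemma:cD}(1); you organize it as a direct estimate (closing up within one period, so the appended arc has length $\ell<N_{\bullet}$ and the closed path has twist $qc_{\cD}N_{\bullet}$ with $qN_{\bullet}=k_1-k_0+\ell$), whereas the paper argues by contradiction, extending the short path forward along the spine until it reaches $\sigma^{km}(p_0)$ and contradicting the same lemma; the content is identical. For the second claim your route is genuinely different: the paper notes that the vertices of $\tilde\cC_{\cD}$ fall into finitely many classes under the isometry $\sigma^{m}$, that $\Pi$ was constructed to commute with $\sigma^{m}$, and simply takes the maximum of $d_{\tilde\cC_{\cD}}(p,\Pi(p))$ over a set of representatives. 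You instead produce an explicit path of length at most $4|\cD|+1$ from $p$ into the fiber over the image of $\tilde\Gamma_{\bullet}(0)$ in $\cC^{+}_{\cD}$ (Lemma \ref{coro:A1Npos} plus the parity adjustment in the double cover), identify that fiber with the $\langle\sigma\rangle$-orbit of $\tilde\Gamma_{\bullet}(0)$ and its spine points with the $\sigma^{m}$-suborbit — correctly using the simplicity of $\Gamma_{\bullet}$ to see the spine meets the fiber only there — and then correct by at most $C=\max_{|b|\le m}d_{\tilde\cC_{\cD}}\bigl(\tilde\Gamma_{\bullet}(0),\sigma^{b}(\tilde\Gamma_{\bullet}(0))\bigr)$, using only that $\Pi(p)$ is a nearest spine vertex rather than any equivariance of $\Pi$. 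What your version buys is a semi-explicit constant $d\le 4|\cD|+1+C$ (and $C$ could in turn be bounded via closed paths of twist $\pm1$ of controlled length, as in Lemma \ref{lemma:nontrivialtwist}), at the cost of more bookkeeping; the paper's finiteness-of-orbits argument is shorter but yields no explicit bound.
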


\begin{proof}
Let $p_0, p_1$ be vertices of the spine $\tilde\cC_{\cD}^{\bullet}$;
we may assume without loss of generality that $\Tw(p_0) < \Tw(p_1)$.
Clearly
$d_{\tilde\cC_{\cD}}(p_0,p_1) \le
d_{\tilde\cC_{\cD}^{\bullet}}(p_0,p_1)$;
assume by contradiction that 
$N_0 = d_{\tilde\cC_{\cD}}(p_0,p_1) <
N_1 = d_{\tilde\cC_{\cD}^{\bullet}}(p_0,p_1)$.
Take $\tilde N_1 > N_1$, $\tilde N_1 = kN_{\bullet}$, $k \in \NN^\ast$;
set $\tilde N_0 = N_0 + \tilde N_1 - N_1$.
Let $\Gamma: [0,N_0] \to \tilde\cC_{\cD}$
be a tiling path with $\Gamma(0) = p_0$, $\Gamma(N_0) = p_1$.
Extend $\Gamma$ to $\Gamma: [0,\tilde N_0] \to \tilde\cC_{\cD}$
by following the spine $\tilde\cC_{\cD}^{\bullet}$ in the positive direction
so that $\Gamma(\tilde N_0) = \sigma^{km}(p_0)$.
The closed curve $\Gamma: [0,\tilde N_0] \to \cC_{\cD}$
satisfies $\Tw(\Gamma) = km > c_{\cD}\tilde N_0$,
violating Lemma \ref{lemma:cD}.

For the second claim, consider the equivalence class in $\tilde\cP$
identifying $p$ with $\sigma^m(p)$.
There are finitely many equivalence classes:
let $X \subset \tilde\cP$ be a set of representatives.
Take $d = \max_{p \in X} d_{\tilde\cC_{\cD}}(p,\Pi(p))$:
the claim follows from invariance under $\sigma^m$.
\end{proof}


\section{Proof of Theorem \ref{theo:M}}
\label{sect:theoM}

Let $\Gamma_{\bullet}: [0,N_{\bullet}] \to \tilde\cC_{\cD}$
and the spine $\cC^{\bullet}_{\cD}$ be as in the previous section.
Let $d$ be as in Lemma \ref{lemma:quasi}.

Recall that $\tilde\cC_{\cD}$ is
the universal cover of the finite complex $\cC_{\cD}$
and therefore simply connected.
Given two tiling paths
$\Gamma_0: [0,N_0] \to \tilde\cC_{\cD}$ and
$\Gamma_1: [0,N_1] \to \tilde\cC_{\cD}$
with $\Gamma_0(0) = \Gamma_1(0)$ and $\Gamma_0(N_0) = \Gamma_1(N_1)$
there exists therefore a homotopy with fixed endpoints
between $\Gamma_0$ and $\Gamma_1$.
We may combinatorialize the concept of homotopy (with fixed endpoints)
as follows:
the homotopy is a family $(\Gamma_s)_{s \in \frac1S\ZZ \cap [0,1]}$,
of tiling paths ($S \in \NN^\ast$),
all with the same endpoints,
where two consecutive tiling paths
$\Gamma_{\frac{k}{S}}$ and $\Gamma_{\frac{k+1}{S}}$
differ by one of the two moves below.
\begin{enumerate}
\item{The two consecutive paths may have the same length
and differ by a flip, in other words, by moving the path across
one of the $2$-cells shown in Figures \ref{fig:hflip} and \ref{fig:vflip}.}
\item{The two consecutive paths may have lengths differing by two;
the longer one is obtained from the shorter one by inserting
two adjacent floors (edges), one the inverse of the other.}
\end{enumerate}

Two tilings $\bt_0, \bt_1 \in \cT(\cR_N)$ with $\Tw(\bt_0) = \Tw(\bt_1) = t$
thus correspond to two tiling paths
$\Gamma_0, \Gamma_1: [0,N] \to \tilde\cC_{\cD}$
with the same endpoints:
$\Gamma_0(0) = \Gamma_1(0) = \emptyplug$,
$\Gamma_0(N) = \Gamma_1(N) = \sigma^t(\emptyplug)$.
We know that a homotopy (with fixed endpoints) exists.
If all paths in the homotopy have length at most $N + M$
then $\bt_0 \ast \bt_{\nvert,M} \approx \bt_1 \ast \bt_{\nvert,M}$.
In order to prove Theorem \ref{theo:M}, therefore,
we need to control the length of paths in a homotopy.

\begin{proof}[Proof of Theorem \ref{theo:M}]
Recall that $d$ is as in Lemma \ref{lemma:quasi}.
Let $\tilde M$ be such that
if $\Gamma_0, \Gamma_1$ are tiling paths
with the same endpoints in $\tilde\cC_{\cD}$
with lengths $N_0, N_1 \le 4d + 4$
then there exists a homotopy (with fixed endpoints)
from $\Gamma_0$ to $\Gamma_1$ such that all intermediate paths
have length at most $\tilde M$.
The existence of such $\tilde M$ follows from the fact that
the number of such pairs of paths with values in $\cC_{\cD}$ is finite;
pairs differing by a deck transformation are equivalent.
We claim that $M = \tilde M + 4d$ satisfies 
the statement of the theorem.

Given an integer $t$ we construct a tiling path $\Gamma_0$
from $\emptyplug \in \tilde\cP$ to $\sigma^t(\emptyplug)$ as follows.
First construct the shortest arc from
$\emptyplug$ to the spine $\cC^{\bullet}_{\cD}$:
this will be the beginning of $\Gamma_0$.
Next construct the shortest arc from
the spine $\cC^{\bullet}_{\cD}$ to $\sigma^t(\emptyplug)$:
this will be the end of $\Gamma_0$.
Notice that beginning and end have length at most $d$ each.
The middle of $\Gamma_0$ connects the final point of the first arc
to the initial point of the second arc along the spine $\cC^{\bullet}_{\cD}$:
from Lemma \ref{lemma:quasi},
it is the shortest arc connecting these two points.
Let $N_0$ be the length of $\Gamma_0$.

Given a tiling $\bt_1 \in \cT(\cR_{N_1})$, $\Tw(\bt) = t$,
consider the corresponding tiling path
$\Gamma_1: [0,{N_1}] \to \tilde\cC_{\cD}$
with endpoints $\Gamma_0(1) = \emptyplug$
and $\Gamma_1({N_1}) = \sigma^t(\emptyplug)$.
From Lemma \ref{lemma:quasi} and the triangle inequality,
${N_1} \ge N_0 - 4d$.
We construct a homotopy from $\Gamma_0$ to $\Gamma_1$.

We first define intermediate paths $\Gamma_{\frac{s}{1+N_1}}$
for $s \in \{1,2,\ldots,N_1\}$ as follows.
First follow $\Gamma_1$ from $\Gamma_1(0)$ to $\Gamma_1(s)$:
call this part the first part of $\Gamma_{\frac{s}{1+N_1}}$.
We now construct the second part of $\Gamma_{\frac{s}{1+N_1}}$
just as we constructed $\Gamma_0$.
More precisely: first construct the shortest arc 
from $\Gamma_1(s)$ to the spine $\cC^{\bullet}_{\cD}$:
this will be the beginning of the second part of $\Gamma_{\frac{s}{1+N_1}}$.
Next construct the shortest arc from
the spine $\cC^{\bullet}_{\cD}$ to $\sigma^t(\emptyplug)$:
this will be the end of the second part of $\Gamma_{\frac{s}{1+N_1}}$
(notice that it coincides with the end of $\Gamma_0$).
The middle of the second part of $\Gamma_{\frac{s}{1+N_1}}$
again connects the final point of the first arc
to the initial point of the second arc along the spine $\cC^{\bullet}_{\cD}$.
As above, the length of $\Gamma_{\frac{s}{1+N_1}}$ is at most $N_1 + 4d$.

We now need homotopies from
$\Gamma_{\frac{s}{1+N_1}}$ to
$\Gamma_{\frac{s+1}{1+N_1}}$.
The case $s = N_1$ is easy:
$\Gamma_{\frac{N_1}{1+N_1}}$ differs from $\Gamma_1$
just by the fact that at the end we add a path from
$\sigma^t(\emptyplug)$ to the spine $\cC^{\bullet}_{\cD}$ and back.
We may assume that it is the same path,
so the homotopy consists of using the second move above
to eliminate the difference
(or perhaps the reader prefers to imitate
the proof of Lemma \ref{lemma:eventiling}).
We thus focus in the case $s < N_1$.

Notice that 
$\Gamma_{\frac{s}{1+N_1}}$ and
$\Gamma_{\frac{s+1}{1+N_1}}$ coincide from $0$ to $s$.
The path $\Gamma_{\frac{s}{1+N_1}}$
then follows an arc from $\Gamma_1(s)$ to $\cC^{\bullet}_{\cD}$
and then follows along the spine $\cC^{\bullet}_{\cD}$.
The path $\Gamma_{\frac{s+1}{1+N_1}}$ 
first moves from $\Gamma_1(s)$ to $\Gamma_1(s+1)$,
then follows an arc from $\Gamma_1(s+1)$ to $\cC^{\bullet}_{\cD}$
and then follows along the spine $\cC^{\bullet}_{\cD}$
(see Figure \ref{fig:homotopy}).

\begin{figure}[ht]
\centering
\def\svgwidth{75mm}
\begingroup%
  \makeatletter%
  \providecommand\color[2][]{%
    \errmessage{(Inkscape) Color is used for the text in Inkscape, but the package 'color.sty' is not loaded}%
    \renewcommand\color[2][]{}%
  }%
  \providecommand\transparent[1]{%
    \errmessage{(Inkscape) Transparency is used (non-zero) for the text in Inkscape, but the package 'transparent.sty' is not loaded}%
    \renewcommand\transparent[1]{}%
  }%
  \providecommand\rotatebox[2]{#2}%
  \newcommand*\fsize{\dimexpr\f@size pt\relax}%
  \newcommand*\lineheight[1]{\fontsize{\fsize}{#1\fsize}\selectfont}%
  \ifx\svgwidth\undefined%
    \setlength{\unitlength}{453.62199457bp}%
    \ifx\svgscale\undefined%
      \relax%
    \else%
      \setlength{\unitlength}{\unitlength * \real{\svgscale}}%
    \fi%
  \else%
    \setlength{\unitlength}{\svgwidth}%
  \fi%
  \global\let\svgwidth\undefined%
  \global\let\svgscale\undefined%
  \makeatother%
  \begin{picture}(1,0.52927213)%
    \lineheight{1}%
    \setlength\tabcolsep{0pt}%
    \put(0,0){\includegraphics[width=\unitlength,page=1]{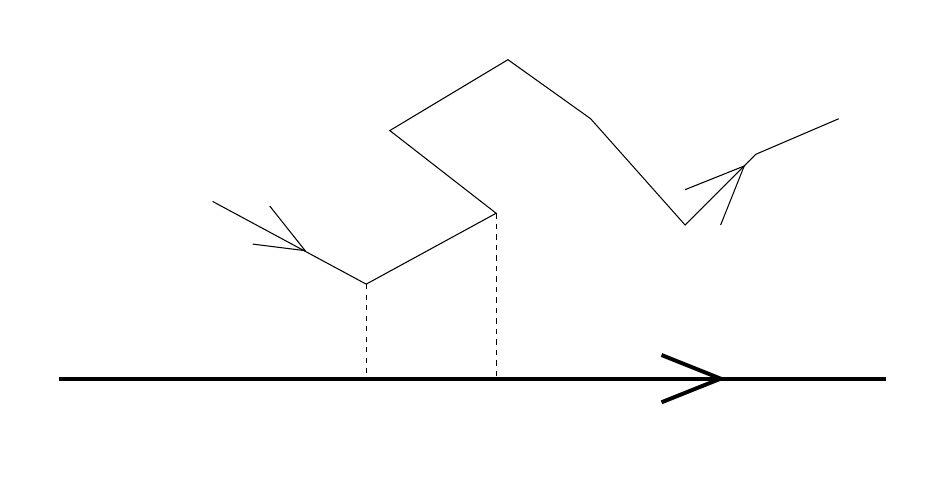}}%
    \put(0.75000619,0.06610925){\color[rgb]{0,0,0}\makebox(0,0)[lt]{\lineheight{1.25}\smash{\begin{tabular}[t]{l}$\cC^{\bullet}_{\cD}$\end{tabular}}}}%
    \put(0.24999381,0.35361637){\color[rgb]{0,0,0}\makebox(0,0)[lt]{\lineheight{1.25}\smash{\begin{tabular}[t]{l}$\Gamma_1$\end{tabular}}}}%
    \put(0.37569136,0.26403082){\color[rgb]{0,0,0}\makebox(0,0)[lt]{\lineheight{1.25}\smash{\begin{tabular}[t]{l}$s$\end{tabular}}}}%
    \put(0.54236216,0.2897259){\color[rgb]{0,0,0}\makebox(0,0)[lt]{\lineheight{1.25}\smash{\begin{tabular}[t]{l}$s+1$\end{tabular}}}}%
  \end{picture}%
\endgroup%

\caption{The two paths $\Gamma_{\frac{s}{1+N_1}}$ and
$\Gamma_{\frac{s+1}{1+N_1}}$ differ in a short arc only.}
\label{fig:homotopy}
\end{figure}

There exist therefore intervals of length at most $4d$ each
in the domains of $\Gamma_{\frac{s}{1+N_1}}$ and
$\Gamma_{\frac{s+1}{1+N_1}}$ such that these two paths
coincide outside the intervals.
We construct a homotopy from one to the other
changing these arcs only.
By definition of $\tilde M$,
such a homotopy exists using intermediate curves of length at most $\tilde M$.
Thus, when we plug this smaller homotopy inside the larger one
all intermediate curves have length at most $N_1 + M$,
as desired.
\end{proof}

\begin{coro}
\label{coro:giant}
Let $\cD$ be a regular quadriculated disk;
let $M \in \NN^{\ast}$ be as in Theorem \ref{theo:M}.
For $N \in \NN^{\ast}$ and $t \in \ZZ$,
let $\cT_{N,t} = \{ \bt \in \cT(\cR_N) \;|\; \Tw(\bt) = t \}$.
Partition $\cT_{N,t}$ by the equivalence relation $\approx$.
All tilings $\bt \in \cT_{N,t}$ having {\em at least} $M$
vertical floors belong to the same connected component.
\end{coro}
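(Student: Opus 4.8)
The plan is to reduce the statement to Theorem \ref{theo:M} by putting every tiling with many vertical floors into a common normal form. The key assertion to prove is: if $\bt\in\cT(\cR_N)$ has at least $M$ vertical floors, then $\bt\approx\bt_1\ast\bt_{\nvert,M}$ for some $\bt_1\in\cT(\cR_{N-M})$. Since concatenation with a vertical tiling adds $0$ to the twist and flips preserve the twist, such a $\bt_1$ automatically satisfies $\Tw(\bt_1)=\Tw(\bt)$. Granting this, the corollary is immediate: given $\bt_0,\bt_1\in\cT_{N,t}$ each with at least $M$ vertical floors, write $\bt_i\approx\bt_i'\ast\bt_{\nvert,M}$ with $\bt_i'\in\cT(\cR_{N-M})$ and $\Tw(\bt_0')=\Tw(\bt_1')=t$; then Theorem \ref{theo:M}, applied with the height $N-M$, gives $\bt_0'\ast\bt_{\nvert,M}\approx\bt_1'\ast\bt_{\nvert,M}$, hence $\bt_0\approx\bt_1$.

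So all the work is in the normal form assertion. Note that $M$ is even (it is the height of a $\bt_{\nvert}$). The idea is to use flips to collect all vertical floors of $\bt$ into a single block at the very top of the cylinder; a vertical block of floors $N-\ell+1,\dots,N$ whose bottom neighbour (floor $N-\ell$) is either absent or not vertical necessarily has $\ell$ even, because the top plug of floor $N$ is $\emptyplug$, so the plugs of the block alternate $\emptyplug,\fullplug,\emptyplug,\dots$, and if $\ell$ were odd the bottom plug of the block would be $\fullplug$, forcing floor $N-\ell$ to be vertical too. Such a top block is exactly $\bt_{\nvert,\ell}$. Thus once all vertical floors sit in one top block its length is an even $2m'\ge M$, and $\bt\approx\bt_0'\ast\bt_{\nvert,2m'}$ with $\bt_0'\in\cT(\cR_{N-2m'})$; writing $\bt_{\nvert,2m'}=\bt_{\nvert,2m'-M}\ast\bt_{\nvert,M}$ produces the desired $\bt_1=\bt_0'\ast\bt_{\nvert,2m'-M}$.

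The gathering itself rests on two moves. The first is Lemma \ref{lemma:movevert}, which lets a consecutive pair of vertical floors be slid past any tiling; repeated use pushes such a pair up to positions $N-1,N$, where successive pairs accumulate into a block containing floor $N$, and this sliding leaves all other vertical floors untouched (in the proof of Lemma \ref{lemma:movevert} the intervening tiling is merely translated). The only obstruction is an isolated vertical floor — a maximal vertical block of length $1$ — since Lemma \ref{lemma:movevert} handles pairs only; this is the hard part. It is resolved by an explicit two–flip device that converts an isolated vertical floor at position $a$ into a consecutive vertical pair at positions $a-1,a$: boundary cases $a\in\{1,N\}$ cannot occur (a vertical floor adjacent to the bottom or top forces its neighbour vertical, as above), so floor $a-1$ exists and is not vertical; for each domino $d=\{s,s'\}$ of the reduced floor of $a-1$, first replace the vertical dominoes $s\times[a-1,a+1]$, $s'\times[a-1,a+1]$ by the stacked horizontal pair $d\times[a-1,a]$, $d\times[a,a+1]$, and then replace $d\times[a-2,a-1]$, $d\times[a-1,a]$ by the vertical pair $s\times[a-2,a]$, $s'\times[a-2,a]$. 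Carrying this out for every domino of floor $a-1$ moves all its horizontal dominoes up into floor $a+1$ (which stays non-vertical), leaves floor $a$ vertical, and makes floor $a-1$ vertical, destroying no vertical floor elsewhere. All these flips are legal by the local description of flips in Section \ref{sect:groupcomplex}.

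With these tools the gathering loop runs as follows: whenever some vertical floor is not yet in the top block, either it is isolated, in which case one fattens it to a pair and slides the pair up, or it lies in a block of length $\ge 2$, from which one slides a pair up; in both cases the number of vertical floors not in the top block strictly decreases, so the procedure terminates with all vertical floors in a single top block, and we are done. The main obstacle, as indicated, is exactly the treatment of isolated vertical floors (the two–flip conversion) together with the parity bookkeeping that guarantees the final top block has length $\ge M$ rather than $M-1$; the remaining steps — checking legality of the flips and that sliding pairs is harmless — are routine.
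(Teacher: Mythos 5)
Your proposal is correct and follows essentially the same route as the paper: reduce to the normal form $\bt \approx \bt_1 \ast \bt_{\nvert,M}$ by using Lemma \ref{lemma:movevert} to slide vertical floors to the top, and then conclude with Theorem \ref{theo:M}. The only difference is that you spell out the step the paper leaves implicit — that isolated vertical floors (odd maximal blocks) cannot simply be slid to the top and must first be fattened into a pair by your two-flip device, together with the parity check that the final top block is even and of length at least $M$ — and these details are correct.
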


\begin{proof}
Let $\bt_0, \bt_1 \in \cT_{N,t}$ be two such tilings.
Apply Lemma \ref{lemma:movevert} to move vertical floors to the end
and therefore obtain $\tilde\bt_0, \tilde\bt_1 \in \cT(\cR_{N-M})$
with $\bt_i \approx \tilde\bt_1 \ast \bt_{\nvert,M}$ (for $i \in \{0,1\}$).
From Theorem \ref{theo:M},
$\tilde\bt_0 \ast \bt_{\nvert,M} \approx \tilde\bt_1 \ast \bt_{\nvert,M}$,
completing the proof.
\end{proof}

\begin{remark}
\label{remark:diameter}
It follows from the proof of Theorem \ref{theo:M} that
there exists a linear bound (as a function of $N$)
to the number of flips necessary to move in $\cT(\cR_{N+M})$
from $\bt_0 \ast \bt_{\nvert,M}$ to $\bt_1 \ast \bt_{\nvert,M}$,
where $\bt_0, \bt_1 \in \cT(\cR_N)$, $\Tw(\bt_0) = \Tw(\bt_1)$.
Indeed, it suffices to {\em first} establish the length of the longest path.
The contruction of $\Gamma_{\frac{s}{1+N_1}}$ is then done 
in the order above, with any extra length taken up 
by going back and forth along $\cC^{\bullet}_{\cD}$
at the start of the middle of the second part of $\Gamma_{\frac{s}{1+N_1}}$.
\end{remark}

\begin{remark}
\label{remark:hyperbolic}
The crucial property of the domino group $G_{\cD} = \pi_1(\cC_{\cD})$
in the proof of Theorem \ref{theo:M} appears to be hyperbolicity
(in the sense of Gromov, see \cite{gromov}).

The group $\ZZ$ is of course a rather too special example
of a hyperbolic group.
Work in progress indicates that for some irregular regions
the domino group $G_{\cD}$ has exponential growth
but also contains copies of $\ZZ^2$
and is therefore not hyperbolic.
\end{remark}


\section{Final remarks}
\label{sect:final}

The reader probably sees that many questions were left unanswered;
we make a few remarks about them.

The most obvious question is probably:
exactly which quadriculated disks are regular?
This question is briefly discussed at the end of Section \ref{sect:44}.
As mentioned there, we do not have a complete answer.
Significant progress has been acheived in \cite{marreiros},
and there is more related work in progress.


Another question is to compute the domino group $G_{\cD}$
for examples of non regular quadriculated disks $\cD$.
Notice that even for rectangles $[0,2] \times [0,N]$
we did not compute the group;
in fact it is not hard to see (particularly using \cite{primeiroartigo})
that the map we constructed is not an isomorphism.
Also here, there is work in progress.

Theorem \ref{theo:M} invites several questions.
We might want to determine the best value of $M$ as a function of $\cD$.
Computations for the example $\cD = [0,4]^2$
show that if $N = 4$ then $M = 2$ works;
there is some evidence suggesting that $M = 2$ may work for all $N$.
At this point, it is even consistent with what we know
that $M$ can be taken as a constant independent of $\cD$;
perhaps even $M = 2$ works.
It would also be interesting to obtain a similar result
without the hypothesis of $\cD$ being regular.
The general case (when $\cD$ is not regular)
probably depends on the structure of the domino group $G_{\cD}$;
as discussed in Remark \ref{remark:hyperbolic},
the concept of hyperbolicity may be relevant.

In \cite{saldanhaejc} we present some probabilistic results.
In particular, we use Theorem \ref{theo:M} and Corollary \ref{coro:giant}
(together with an estimate on the distribution of twist)
to say something about the number and sizes 
of the connected components (or equivalence classes) under $\approx$.
In particular, we see that small values of twist
account for almost all tilings and that,
given a small value for the twist,
the connected component described in Corollary \ref{coro:giant}
is a ``giant component''
which contains almost all tilings with that twist.
Still, it would be interesting to have a better understanding
of the smaller connected components.

Another natural question is:
what happens in dimensions $4$ and higher?
This is the subject of \cite{KS}:
the twist can be defined, but now with values in $\ZZ/(2)$.
Many regions are regular;
for regular regions, if two tilings have the same twist
then almost always they can be joined by a finite sequence of flips.
Also, if a little extra space is allowed,
then two tilings with the same twist can always be joined by flips.
Thus, the space of tilings has two twin giant components,
one for each value of the twist.


\medskip

\noindent
\footnotesize
Departamento de Matem\'atica, PUC-Rio \\
Rua Marqu\^es de S\~ao Vicente, 225, Rio de Janeiro, RJ 22451-900, Brazil \\
\url{saldanha@puc-rio.br}

\end{document}